\newcommand{\ubar}[1]{\underaccent{\bar}{#1}}
\newcommand{\essup}{\ess \!\!\; \sup}
\newcommand{\essinf}{\ess \!\!\; \inf}
\DeclareMathOperator\supp{supp}
\DeclareMathOperator{\sign}{sign}
\DeclareMathOperator{\meas}{meas}
\DeclareMathOperator{\ess}{ess}
\DeclareMathOperator{\God}{\bold{God}}
\DeclareMathOperator{\card}{\bold{Card}}
\renewcommand{\d}{\textrm{\,d}}
\crefname{hypothesis}{Hypothesis}{Hypotheses}
\title{Existence of solutions to a class of one-dimensional models for pedestrian evacuations\thanks{Submitted to the editors DATE.}}
\author{Boris Andreianov\thanks{Institut Denis Poisson CNRS UMR 7013, Université de Tours, Université d’Orléans,
Parc Grandmont, 37200 Tours, France and
Peoples’ Friendship University of Russia (RUDN University)
6 Miklukho-Maklaya St, Moscow, 117198, Russian Federation
(\email{Boris.Andreianov@lmpt.univ-tours.fr}, https://www.idpoisson.fr/andreianov/).}
\and Theo Girard \thanks{Institut Denis Poisson, Université de Tours, Parc Grandmont, 37200 Tours, France (\email{theo.girard@lmpt.univ-tours.fr}).}
}
\begin{document}

\maketitle

\begin{abstract}
In the framework inspired by R. L. Hughes model (Transp. Res. B, 2002) for pedestrian evacuation in a corridor, we establish existence of a solution by a topological fixed point argument. This argument applies to a class of models where the dynamics of the pedestrian density $\rho$ (governed by a discontinuous-flux Lighthill,Whitham and Richards model $\rho_t + \left(\sign(x-\xi(t))\rho v(\rho)\right)_x = 0$ ) is coupled via an abstract operator to the computation of a Lipschitz continuous ``turning curve'' $\xi$. \textcolor{black}{We illustrate this construction by several examples, including the standard Hughes' model with affine cost, and either with open-end conditions or with conditions corresponding to panic behaviour with capacity drop at exits. Other examples put forward versions of the Hughes model with inertial dynamics of the turning curve and general costs.} \end{abstract}

\begin{keywords}\textcolor{black}{
crowd dynamics, pedestrian evacuation, Hughes' model, capacity drop, existence, Schauder fixed-point, admissible solution, discontinuous-flux conservation law, memory, relaxation}
\end{keywords}

\begin{MSCcodes}
35L65, 47H10
\end{MSCcodes}

\section{Introduction}

\subsection{The Hughes model and its variants}
The Lighthill,Whitham and Richards (LWR) model for traffic introduced in \cite{lighthill1955kinematic} and in \cite{richards1956shock} consists in a conservation law for the vehicule density $\rho$ with a concave positive flux $\rho v(\rho)$:
\begin{equation}\label{eqLWR}
  \left\lbrace
  \begin{matrix}
  \rho_t + \left[\rho v(\rho) \right]_x &=& 0 \\
  \rho(t=0,x) &=& \rho_0(x). \\
\end{matrix}
  \right.
\end{equation}
Here, we can suppose that the density $\rho$ takes its values in $[0,1]$ and $v$ stands for the speed of the traffic. This model can be seen as the mass conservation equation where velocity $v$ depends only on the traffic density $\rho$. One frequently chooses $v(\rho) = 1 - \rho$ up to a multiplicative constant representing the maximal velocity. This describes a transport of the initial density of agents $\rho_0$ at $t=0$  towards $x = + \infty$ where the speed is decreasing when the density of agents is increasing.

\smallskip
Then, in \cite{Hughes2002ACT}, Hughes proposed a model of pedestrian evacuation as a system of two equations on $\rho$ and $\phi$ which is known as Hughes' model. In the multi-dimensional model, $\rho$ is the density of pedestrians with respect to time $t$ and space $x$. The dynamics of $\rho$ is governed by LWR conservation laws with direction field oriented towards the exits of a bounded domain $\Omega$.
In order to prescribe the direction towards the exit preferred by a pedestrian at location $x$ at a time $t$, Hughes defines $\phi(t,x)$, the ``potential field'' satisfying an eikonal equation. The potential $\phi$ is zero on the exits located on $\partial \Omega$. A pedestrian would then choose to ``descend the gradient'' of this potential in order to leave the domain $\Omega$ by these exits. Theory of the Hughes' model is yet incomplete, even in one space dimension. In the 1D case, the model of \cite{Hughes2002ACT} takes the form:
\begin{subequations}\label{eqHughesModel}
\begin{empheq}[left = \empheqlbrace]{align}
\rho_t + \left[ \sign(-\partial_x \phi)\rho v(\rho)\right]_x &= 0 \label{seqCLHughes} \\
\rho(t, x = \pm 1) & = 0 \label{seqEikoCB-rho}\\
\left|\partial_x \phi \right| &= \frac{1}{v(\rho)}  \label{seqEiko} \\
\phi(t, x = \pm 1) &= 0 . \label{seqEikoCB}
\end{empheq}
\end{subequations}
This problem \eqref{eqHughesModel} is set up in a corridor with two exits; upon renormalization, we assumed that $\Omega = \, (-1,1)$ and that the exits are located at $x = \pm 1$. At $t=0$ the pedestrians are distributed with a given density $\rho_0$ defined in $[-1,1]$ and at $t>0$, the pedestrians want to leave the corridor by either one of the exits (as if a fire alarm starts ringing at $t=0$).
The pedestrians move forward (with the positive flux $\rho \mapsto +\rho v(\rho)$) or backward (with $\rho \mapsto - \rho v(\rho)$ ) depending of the sign of $\partial_x \phi$. This results in \eqref{seqCLHughes} being a discontinuous flux LWR conservation law.
The sign of $\partial_x \phi$ is prescribed by the eikonal equation \eqref{seqEiko} where $c(\rho) = \frac{1}{v(\rho)}$ is a cost function that is high where the crowd is slow. Consequently, the pedestrians tend to avoid those ``congested'' regions.

\smallskip
The Dirichlet boundary condition \eqref{seqEikoCB-rho} on the density $\rho$ is understood in the Bardos-LeRoux-N\'ed\'elec sense standard for scalar conservation laws; it is shown in \cite[Sect.\,3]{andreianov2021} that upon extending $\rho_0$ by the value zero on $\mathbb{R}\setminus{[-1,1]}$,
one can replace the initial-boundary value problem \eqref{seqCLHughes}-\eqref{seqEikoCB-rho} with $\rho_0: (-1,1)\longrightarrow [0,1]$ by the pure intitial-value problem for \eqref{seqCLHughes} with the extended datum $\rho_0: \mathbb{R}\longrightarrow [0,1]$
(the extension means that $\rho_0$, now defined on $\mathbb{R}$, is supported in $[-1,1]$). We adopt this viewpoint and require, throughout the paper,
\begin{equation}\label{eq:HypRho_0}
	\rho_0 \in L^\infty(\mathbb{R};[0,1]), \quad \rho(x)=0\; \text{for $x\notin [-1,1]$};
\end{equation}
note that being compactly supported, $\rho_0\in L^1(\mathbb{R})$.
Assumption \eqref{eq:HypRho_0} for the conservation law \eqref{seqCLHughes} set up in the whole space can be seen as ``open-end condition'' at exits; we refer to Section~\ref{sec:constrained-exit} for models with more involved exit behavior.

\smallskip
In \cite{ElKhatib2013OnEW}, the 1D Hughes' model \eqref{eqHughesModel} has been reformulated in terms of a ``turning curve'' $\xi(t)$ instead of the potential $\phi$. Following the turning curve approach, our prototype model in the sequel will be:
\begin{subequations}\label{eqElKhatibModel}
\begin{empheq}[left = \empheqlbrace ]{align}
\rho_t + \left[ \sign(x-\xi(t))\rho v(\rho)\right]_x  =  0 \label{seqCLElKhatib}\\
\int_{-1}^{\xi(t)} c(\rho(t,x)) \d x  = \int_{\xi(t)}^1 c(\rho(t,x)) \d x .  \label{seqElKhatibEiko}
\end{empheq}
\end{subequations}
with $\rho$ defined for $t\in [0,T]$, $T>0$, and $x\in\mathbb{R}$ and with initial datum of the form \eqref{eq:HypRho_0}.
Here $c$ denotes a generic cost function. It is proven in \cite{ElKhatib2013OnEW} that we can equivalently consider either the Hughes' model potential equation \eqref{seqEiko}-\eqref{seqEikoCB} or the reformulated problem \eqref{seqElKhatibEiko} with the cost function $c(\rho) = \frac{1}{v(\rho)}$. \\
However, here, we will consider a cost verifying the following conditions:
\begin{equation}\label{eqCostConditions}
\left\lbrace \begin{matrix}
c \in W^{1,\infty}([0,1]),\\
\forall \rho \in [0,1], c(\rho) \geq 1, \\
c \textrm{ is increasing on } [0,1].
\end{matrix} \right.
\end{equation}

In \eqref{eqElKhatibModel}, $\rho$ is considered to be an entropy solution to \eqref{seqCLElKhatib}. Such notion of solution with a particular attention to the admissibility of the jump of $\rho$ across the turning curve $x = \xi(t)$ was proposed in \cite{ElKhatib2013OnEW} (we will slightly simplify this solution notion).
On the other hand, $\xi$ is a pointwise defined solution to \eqref{seqElKhatibEiko} whose existence in $L^\infty$ and uniqueness follows from the intermediate values theorem under the conditions \eqref{eqCostConditions}.\\
In this paper, we will consider a class of ``turning curve'' model's generalisations, keeping in mind the fact that, even in the setting \eqref{eqElKhatibModel}, little is known about the well-posedness of the Hughes' model. For notation's sake, we consider $f$ a generic concave positive flux such that $f(0)=f(1)=0$ (one can assume $f(\rho) = \rho v (\rho)$ to recover the LWR model):
\begin{subequations}\label{eqConsideredModel}
\begin{empheq}[left = \empheqlbrace, right = . ]{align}
\rho_t + \left[\sign(x-\xi(t))f(\rho)\right]_x &= 0 \label{seqTrueCL} \\
\rho(0,x) &= \rho_0(x) \label{seqTrueCI} \\
\xi &=\mathcal{I}(\rho) \label{seqTrueOp}
\end{empheq}
\end{subequations}
Here $\mathcal{I}$ is an abstract operator mapping the density $\rho$ to a turning curve $\xi$. The problem \eqref{eqElKhatibModel} is a particular case of \eqref{eqConsideredModel} where $\mathcal{I}$ is the solver of the integral equation \eqref{seqElKhatibEiko}.
Stating \eqref{seqTrueCI}, we mean that $\rho_0$ fulfills \eqref{eq:HypRho_0} which corresponds to open-end evacuation at exits, as stated above.

\smallskip
Let us briefly discuss known results on the specific problem \eqref{eqElKhatibModel} and its variants.
In \cite{ElKhatib2013OnEW} uniqueness is proven for a definition of entropy solutions taking the discontinuity into account but considering $\xi$ as being given beforehand (we will revisit this result in Section 2).
In \cite{Amadori2014-uw} global existence for Hughes' model (with $c(\rho) = \frac{1}{v(\rho)}$) is proven if one assumes that the density at the turning curve is zero for all times. In \cite{andreianov2021}, a uniqueness result in the same setting as this paper assuming moreover the $BV$ regularity of the solutions is provided.
And in \cite{Twarogowska2013}, \cite{Goatin2013-ha} and \cite{Gomes2016-wy} one can find numerical studies of the model.
Proof of existence and unicity for the regularized problem can be found in \cite{Di_Francesco2011-ke}.
The Hughes' model is also revisited with different turning curve equation in \cite{Carillo2016} with numerical simulation. In this paper, the authors introduce a regularization by convolution of the density named the subjective density. We also use the same type of idea when applying our main result in the case of a general cost function $c$.
The only general (with respect to the choice of the initial data) existence result is contained in \cite{andreianov2021}, where solutions with $BV_{loc}$ regularity away from the turning curve were constructed via a well-chosen many-particle approximation.
The result of \cite{andreianov2021} for problem \eqref{eqElKhatibModel} is limited to the case of an affine cost $c(\rho) = 1 + \alpha \rho$. Our result for the original setting \eqref{eqElKhatibModel} will also be limited to the affine cost case.
But we provide a shorter and less specific argument, compared to the many-particle approximation of \cite{andreianov2021}, also we require fewer assumptions on the velocity profile $v$ compared to \cite{andreianov2021}. The fixed-point approach we develop appears to be rather flexible since it  permits to handle several models of the form \eqref{eqConsideredModel}. We also adapt the arguments to more realistic, in the setting of crowd evacuation, exit behavior of the ``capacity drop'' kind (cf. \cite{Andreianov2014CDA,Donadello2015}).
However, we highlight the fact that our approach is restricted to situations where Lipschitz continuity of the turning curve $\xi$ is guaranteed for the model at hand, which appears to be a strong restriction on its applicability; this restriction also appears in \cite{andreianov2021}.

\subsection{Abstract framework and general results}

In this paper we propose an existence result elaborated through a fixed-point argument to problem \eqref{eqConsideredModel} under abstract assumptions on $\mathcal{I}$. Roughly speaking, we require that $\mathcal{I}$ maps any admissible solution $\rho$ of the equation \eqref{seqTrueCL} to a Lipschitz continuous turning curve $\xi$. Furthermore, the Lipschitz constant of those turning curves must be uniformly bounded for any $\rho$.

\smallskip
We stress that the Hughes' model with affine cost $c(\rho) = 1 + \alpha \rho$ enters our abstract framework. However, it is not clear whether, for general costs satisfying \eqref{eqCostConditions}, the required Lipschitz bounds hold true. This issue for the original Hughes' model is left for further investigation. Models with more regular dependence of $\xi$ on $\rho$ can be considered as well, including memory and relaxation effects, and for these models the Lipschitz continuity of $\xi$ is justifiable for general costs.

\smallskip
First, let's introduce some notations that will be used throughout the whole paper.

\smallskip\noindent
$\bullet$ We denote $\left\lbrace x < \xi(t) \right\rbrace := \left\lbrace (t,x) \in [0,T]\times\mathbb{R} \textrm{ s.t. } x < \xi(t) \right\rbrace$. Analogously, we use $\left\lbrace x = \xi(t) \right\rbrace$ and $\left\lbrace x > \xi(t) \right\rbrace$.\\[2pt]
 $\bullet$ For any $r>0$, we write
  \[ B_{W^{1,\infty}}(0,r) := \left\lbrace \xi \in W^{1,\infty}((0,T),\mathbb{R}) \textrm{ s.t. } \|\dot{\xi}\|_{\infty} + \|\xi\|_{\infty} \leq r \right\rbrace .\]
$\bullet$ Analogously, we write $B_{L^1}(0,r)$ for the set of $\rho \in L^1((0,T)\times\mathbb{R}, [0,1])$ such that $\|\rho\|_{L^1((0,T)\times\mathbb{R})} \leq r$.

\medskip
In problem \eqref{eqConsideredModel}, $\rho$ is taken as an admissible solution to the discontinuous flux LWR equation \eqref{seqTrueCL}. On the way of proving the existence result, we propose and use a slightly simpler notion of admissible solution for this equation than the notion used in \cite{ElKhatib2013OnEW}, \cite{Amadori2014-uw} and \cite{Amadori2012-na}.
Those notions of solution are equivalent.
\begin{definition}\label{DefSolSansGerme}
Let $\xi \in W^{1,\infty}((0,T))$. Let $\rho_0 \in L^1(\mathbb{R},[0,1])$.
Let $f$  be a concave positive flux such that $f(0) = 0 = f(1)$ and $F(t,x,\rho) :=  \sign(x - \xi(t))f(\rho)$.\\
We say that $\rho \in L^{1}((0,T)\times \mathbb{R},[0,1])$ is an admissible solution to:
\begin{equation}\label{CLeq}
    \left\lbrace \begin{matrix}
    \rho_t + F(t,x,\rho)_x = 0\\
    \rho(t=0,\cdot) = \rho_0(\cdot)
    \end{matrix} \right.
\end{equation}
if
\begin{itemize}
    \item For all $\phi  \in \mathcal{C}^{\infty}_c((0,T)\times \mathbb{R})$,
    \begin{equation}\label{eqSolFaible}
      \iint_{\Omega} \rho \phi_t + F(t,x,\rho) \phi_x \d t \d x = 0
    \end{equation}

    \item For all positive $\phi \in \mathcal{C}^{\infty}_c(\{ x < \xi(t) \}$ (resp. $\phi \in \mathcal{C}^{\infty}_c(\{ x > \xi(t) \})$ ), for all $k \in [0,1]$,
    \begin{equation}\label{eqSolEntropy}
       - \iint_\Omega \left| \rho - k \right|\phi_t + q(\rho,k)\phi_x \d t \d x - \int_{\mathbb{R}} |\rho_0 - k| \phi(0,x) \d x \leq 0
     \end{equation}
    where we set \begin{equation}\label{eqDefDeq}
    q(u,v) := \sign(u-v)\left[ F(t,x,u) - F(t,x,v) \right]
  \end{equation}
\end{itemize}
\end{definition}
Note that the notion of solution makes sense for arbitrary initial datum  $\rho_0 \in L^1(\mathbb{R},[0,1])$ but in order to keep consistency with the standard Hughes' setting, we will restrict our attention to
data $\rho_0$ that fulfill \eqref{eq:HypRho_0}.
\begin{remark}
  Note that in the above definition, no admissibility condition is prescribed at $\{ x = \xi(t) \}$. Only the conservativity (the Rankine-Hugoniot condition following from \eqref{eqSolFaible}) is required at the location of the turning curve.
\end{remark}
\begin{remark}\label{remCancesContinuity}
Definition \ref{DefSolSansGerme} implies that $\rho \in \mathcal{C}^0([0,T], L^1(\mathbb{R}))$. This is proved by an adapted version of the one in \cite{Cances2011-nh}. Such an adapted proof can be found in \cite{Abraham2021}. Remembering this fact makes sense of the notation $\rho(t,\cdot)$ without ambiguity.
\end{remark}
\bigskip
For a given (and fixed) $\xi \in W^{1,\infty}((0,T))$, it is shown this notion of solution gives a well-posed discontinuous flux conservation law in $L^1((0,T) \times \mathbb{R})$ when $\rho_0$ belongs to $L^1(\mathbb{R};[0,1])$. We then define the solver operator:
\begin{equation}\label{eqSolverDef}
  \mathcal{S}_0 : \left\lbrace \begin{matrix}
  W^{1,\infty}((0,T)) \longrightarrow L^1((0,T)\times \mathbb{R})\\
  \xi \mapsto \rho .
\end{matrix} \right.
\end{equation}
This operator $\mathcal{S}_0$ maps $\xi$ a turning curve to $\mathcal{S}_0(\xi) = \rho$ the unique admissible, in the sense of Definition \ref{DefSolSansGerme}, solution to \eqref{seqTrueCL}-\eqref{seqTrueCI} set up in the whole one-dimensional space.

\smallskip

\begin{remark}\label{remFluxesLR}
  The uniqueness of a solution in the sense of Definition \ref{DefSolSansGerme} still holds for
  $$F(t,x,p) :=  \mathbb{1}_{\{x < \xi(t)\}} f_L(p)+\mathbb{1}_{\{x > \xi(t)\}} f_R(p)$$ where
  $f_L$ (resp. $f_R$) is a convex negative (resp. concave positive) flux such that $f_L(0) = f_L(1) = f_R(0) = f_R(1) = 0$. These are the core properties of the fluxes on which rely our proof.
  For instance, modeling a slanted corridor, we can consider $f_{L,R}(\rho) := v_{L,R} \, \rho (1-\rho)$ where $v_L$ and $v_R$ are positive constants accounting for the difference in speed for a pedestrian when moving to the right or the left exit.
\end{remark}

\smallskip

We now present the notion of solution used for the generalized Hughes' model given by system \eqref{eqConsideredModel}. Recalling Remark \ref{remCancesContinuity}, it makes sense for the operator equation \eqref{seqTrueOp} to be verified for all $t \in [0,T]$.
In fact, we will require that $\xi \in W^{1,\infty}((0,T))$ in order to obtain our main result.
We then use the classical embedding result to identify $\xi$ with a unique element of $\mathcal{C}^0([0,T])$.

\begin{definition}\label{defSolToProblem}
  Consider $\mathcal{I} : L^1((0,T) \times \mathbb{R}) \longrightarrow \mathcal{C}^0([0,T])$.
  We say that $(\rho,\xi)$ is a solution to generalized Hughes' model \eqref{eqConsideredModel} if $\rho$ is a solution to \eqref{seqTrueCL}-\eqref{seqTrueCI} in
  the sense of Definition \ref{DefSolSansGerme} and moreover, the equality $\xi = \mathcal{I}(\rho)$ holds in $\mathcal{C}^0([0,T])$.
\end{definition}

Notice that such a solution can be seen as a fixed point of the composed operator $\mathcal{S}_0 \circ \mathcal{I}$.
In order to prove the existence of a solution, we prove a variant of the Schauder's fixed point Theorem (see \cite{Zeidler2012-im}). To be specific, denoting by $\mathcal{I}:\rho \mapsto \xi$ the operator that serves to compute the interface and by $\mathcal{D}: \xi \mapsto \rho$ the one that serves to compute the density, we prove the following statement:

\begin{lemma}\label{thTopoResult}
  Let $(X, \|\cdot\|_{X})$ be a Banach space, $(Y, \|\cdot \|_{Y})$ a metric space and $K$ a compact subset of $Y$.
  Take $\mathcal{D}: (K, \|\cdot\|_Y) \longrightarrow (X, \|\cdot\|_X)$ a continuous operator.
  Assume there exists $B$ a bounded closed convex subset of $X$ such that:
  \begin{subequations}
  \begin{align}
    &\mathcal{I} : (B, \|\cdot\|_X) \longrightarrow (K, \|\cdot\|_Y) \textrm{ is a continuous operator} \label{eqLipStab} \\
    &\mathcal{D} \circ \mathcal{I} (B) \subset B \label{seqWellDefDoI}
  \end{align}
\end{subequations}
  Then $\mathcal{D}\circ\mathcal{I}$ admits a fixed point in $B$.
\end{lemma}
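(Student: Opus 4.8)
The plan is to read this statement as a convenient repackaging of the Schauder fixed point theorem, the point being that the compactness needed to apply Schauder is not expected from $B$ itself --- which is only bounded, closed and convex, hence typically non-compact when $X$ is infinite-dimensional --- but is instead furnished by the fact that $\mathcal{I}$ takes its values in the compact set $K$. Write $T := \mathcal{D}\circ\mathcal{I}$; by \eqref{seqWellDefDoI} this is a well-defined self-map of $B$, and $B$ is nonempty (as it must be for the conclusion to make sense). The first step is to check that $T\colon (B,\|\cdot\|_X)\to(B,\|\cdot\|_X)$ is continuous: this is immediate from \eqref{eqLipStab}, which gives continuity of $\mathcal{I}\colon(B,\|\cdot\|_X)\to(K,\|\cdot\|_Y)$, composed with the assumed continuity of $\mathcal{D}\colon(K,\|\cdot\|_Y)\to(X,\|\cdot\|_X)$.

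The crucial step is the compactness of the image. Since $K$ is compact in $Y$ and $\mathcal{D}$ is continuous, $\mathcal{D}(K)$ is a compact subset of $X$. Because $\mathcal{I}(B)\subseteq K$, we obtain $T(B)=\mathcal{D}(\mathcal{I}(B))\subseteq \mathcal{D}(K)$, so that $\overline{T(B)}$ is a closed subset of the compact set $\mathcal{D}(K)$, hence compact; in other words $T(B)$ is relatively compact in $X$.

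It remains to conclude. Now $B$ is a nonempty closed convex subset of the Banach space $X$, $T\colon B\to B$ is continuous, and $T(B)$ is relatively compact; the Schauder fixed point theorem in the form recorded in \cite{Zeidler2012-im} then provides $\rho\in B$ with $T(\rho)=\rho$, i.e. a fixed point of $\mathcal{D}\circ\mathcal{I}$. If one only wishes to use the compact convex form of Schauder, it is enough to apply it on the set $C$ defined as the closed convex hull of $\overline{T(B)}$ in $X$: by Mazur's theorem $C$ is compact and convex, one has $C\subseteq B$ since $B$ is closed, convex and contains $T(B)$, and $T(C)\subseteq T(B)\subseteq C$, so Schauder applies to $T|_C\colon C\to C$ and the resulting fixed point lies in $C\subseteq B$. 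There is no real obstacle here: the only thing to get right is that relative compactness of the image is automatic from $\mathcal{I}(B)\subseteq K$, so that neither $B$ nor $\mathcal{D}$ needs a compactness hypothesis of its own.
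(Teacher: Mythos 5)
Your proof is correct and follows essentially the same route as the paper: both establish that $\mathcal{D}\circ\mathcal{I}$ is a continuous self-map of $B$ whose image is relatively compact (because $\mathcal{I}(B)\subseteq K$ compact and $\mathcal{D}$ is continuous), and then invoke the Schauder fixed-point theorem. Your additional remark reducing to the compact convex form of Schauder via Mazur's theorem on the closed convex hull of $\overline{T(B)}$ is a welcome extra precision but does not change the argument.
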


\smallskip
\begin{remark}\label{remWellDefDoI}
  We stress that the assumption \eqref{eqLipStab} implies that, on the subset $B$, $\mathcal{I}$ takes its values in $K$, making $\mathcal{D} \circ \mathcal{I}$ well-defined on $B$.
\end{remark}

The assumptions of Lemma \ref{thTopoResult} permit us to formulate sufficient conditions for the existence of a solution in the sense of Definition \ref{defSolToProblem}.
Specifically, the use of the sets $B_{W^{1,\infty}}(0,r)$ (as $K$) and $\mathcal{C}^0([0,T])$ (as $Y$) is the key to the application of Schauder fixed-point argument to $\mathcal{S}_0 \circ \mathcal{I}$ under reachable assumptions on $\mathcal{I}$ in the Hughes' model framework.

\smallskip
We prove in Section 2 the following proposition saying that $\mathcal{S}_0$ is continuous. This continuity matches with the one required for the operator $\mathcal{D}$ in the above lemma.
\begin{proposition}\label{thContinuityOfS}
  Let $\rho_0$ verify \eqref{eq:HypRho_0}. If $f$ satisfies the non-degeneracy condition:
  \begin{equation}\label{NonDegen}
  \meas\Bigl\{x \in [-\|\rho\|_{\infty};|\rho\|_{\infty}] \textrm{ s.t. }f'(x) = 0 \Bigr\}=0
  \end{equation}
then the solver operator $\mathcal{S}_0 : (W^{1,\infty}((0,T), \|\cdot\|_{\infty}) \longrightarrow (L^1((0,T)\times\mathbb{R}), \|\cdot\|_{L^1((0,T)\times\mathbb{R})})$ is continuous.
\end{proposition}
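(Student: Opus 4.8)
The plan is to establish continuity sequentially: given $(\xi_n)_n\subset W^{1,\infty}((0,T))$ converging uniformly to $\xi$, set $\rho_n:=\mathcal{S}_0(\xi_n)$ and show $\rho_n\to\mathcal{S}_0(\xi)$ in $L^1((0,T)\times\mathbb{R})$ by extracting strongly convergent subsequences and identifying their limits through the uniqueness built into Definition~\ref{DefSolSansGerme}. First I would record a uniform localization: by finite speed of propagation for Lipschitz fluxes — the characteristic speeds of $\rho_t\pm f(\rho)_x=0$ are bounded by $L:=\|f'\|_{L^\infty([0,1])}$, whatever the position of the turning curve — every $\rho_n(t,\cdot)$ is supported in $[-1-Lt,1+Lt]\subset[-R,R]$ with $R:=1+LT$; together with $0\le\rho_n\le1$ this gives a family bounded in $L^\infty$ and uniformly compactly supported in $(0,T)\times\mathbb{R}$.

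The heart of the proof is strong $L^1$-compactness of $(\rho_n)$, and this is precisely what the non-degeneracy assumption \eqref{NonDegen} provides. Pick any relatively compact open $\mathcal{O}'\Subset\{x>\xi(t)\}$ (the latter being open in $(0,T)\times\mathbb{R}$ since $\xi$ is continuous); uniform convergence $\xi_n\to\xi$ yields $\mathcal{O}'\subset\{x>\xi_n(t)\}$ for all $n$ large, so on $\mathcal{O}'$ each $\rho_n$ is a Kruzhkov entropy solution of the single smooth conservation law $\rho_t+f(\rho)_x=0$, whose flux meets \eqref{NonDegen}. The classical strong precompactness theorem for entropy solutions of non-degenerate scalar conservation laws (Panov; equivalently the Tartar--DiPerna compensated-compactness scheme together with Murat's lemma, where \eqref{NonDegen} forces the associated Young measure to be a Dirac mass) then gives precompactness of $(\rho_n)$ in $L^1_{\mathrm{loc}}(\mathcal{O}')$. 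Exhausting $\{x>\xi(t)\}$ by such sets and diagonalizing, a subsequence converges a.e.\ on $\{x>\xi(t)\}$; repeating on $\{x<\xi(t)\}$, where $\rho_n$ solves the (still non-degenerate, same derivative zero set) law $\rho_t-f(\rho)_x=0$, a further subsequence converges a.e.\ there too. Since the graph $\{x=\xi(t)\}$ is Lebesgue-null, $\rho_{n_k}\to\rho$ a.e.\ on $(0,T)\times\mathbb{R}$, hence in $L^1((0,T)\times\mathbb{R})$ by dominated convergence using the uniform bound and support.

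It then remains to identify $\rho=\mathcal{S}_0(\xi)$. As $\xi_n(t)\to\xi(t)$ for every $t$, we have $\sign(x-\xi_n(t))\to\sign(x-\xi(t))$ for a.e.\ $(t,x)$, so $\sign(x-\xi_n(t))f(\rho_{n_k})\to\sign(x-\xi(t))f(\rho)$ a.e.\ and, being bounded with uniform support, in $L^1$; passing to the limit in \eqref{eqSolFaible} written for $\rho_{n_k}$ gives \eqref{eqSolFaible} for $\rho$. For the entropy inequalities, fix a positive $\phi\in\mathcal{C}^{\infty}_c(\{x<\xi(t)\})$: its compact support lies in $\{x<\xi_n(t)\}$ for $n$ large, so \eqref{eqSolEntropy} holds along the subsequence, and since $u\mapsto|u-k|$ and $u\mapsto q(u,k)$ are continuous and bounded on $[0,1]$ (the map $q(\cdot,k)$ being continuous even across $u=k$, where the bracket vanishes), the convergence $\rho_{n_k}\to\rho$ in $L^1$ lets us pass to the limit, the initial-data term $\int|\rho_0-k|\phi(0,\cdot)$ being unchanged; the case $\{x>\xi(t)\}$ is identical. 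Thus $\rho$ is an admissible solution associated with $\xi$, so $\rho=\mathcal{S}_0(\xi)$ by the uniqueness already established for Definition~\ref{DefSolSansGerme}. Since every subsequence of $(\rho_n)$ admits a further subsequence converging in $L^1$ to this same limit, the whole sequence converges, which proves the continuity of $\mathcal{S}_0$.

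The single delicate point is the strong compactness above: without \eqref{NonDegen} the densities could sustain oscillations on a region where $f$ is affine, so a compensated-compactness (or velocity-averaging) argument is genuinely needed; the only extra technicality is that the subdomains $\{x\gtrless\xi_n(t)\}$ on which $\rho_n$ solves a smooth law move with $n$, which is handled by the uniform convergence of the $\xi_n$ and an exhaustion of the limiting regions. The remaining steps are routine limit passages.
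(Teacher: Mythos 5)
Your proposal is correct and follows essentially the same route as the paper's proof: exhaust each of the two open regions $\{x\gtrless\xi(t)\}$ by compact sets that, for $n$ large, lie on one side of every $\xi_n$, observe that there each $\rho_n$ is a Kruzhkov entropy solution of a fixed non-degenerate scalar conservation law, invoke strong precompactness (the paper cites the kinetic averaging lemma of Perthame where you cite Panov/compensated compactness — interchangeable here), glue across the Lebesgue-null turning curve, identify the limit via the weak and entropy formulations together with uniqueness, and conclude by the subsequence trick. The only cosmetic difference is that the paper localizes via explicit trapezoids and its Lemma~\ref{thEqualityTrapezoid} rather than generic relatively compact open sets.
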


Combining previous results, we state the main result of this paper:
\begin{theorem}\label{thMainCorollary}
  Let $\rho_0$ verify \eqref{eq:HypRho_0}. Let $B$ a convex closed bounded subset of $L^1((0,T) \times \mathbb{R})$
  and $$\mathcal{I}:  (B,\|\cdot\|_{L^1((0,T)\times \mathbb{R})}) \longrightarrow   (\mathcal{C}^0([0,T], \mathbb{R}), \|\cdot\|_{\infty})$$ be a continuous operator. Assume that $f$ verifies \eqref{NonDegen}.
  If there exists $r>0$ such that:
  \begin{subequations}
  \begin{align}
    &\mathcal{I}(B) \subset B_{W^{1,\infty}}(0,r) \label{seqLipStab} \\
    &\forall \xi \in B_{W^{1,\infty} }(0,r) \textrm{, the unique admissible solution to } \rho_t + \left[ \sign(x-\xi(t))f(\rho)\right]_x = 0 \textrm{ is in } B  \label{seqConditionBpossible}
  \end{align}
\end{subequations}

  then there exists $(\rho,\xi)$ a solution to the problem \eqref{eqConsideredModel} in the sense of Definition \ref{defSolToProblem}.

\end{theorem}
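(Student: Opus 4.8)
The plan is to read off Theorem~\ref{thMainCorollary} as a direct application of the abstract fixed-point Lemma~\ref{thTopoResult}, instantiating $X := L^1((0,T)\times\mathbb{R})$ (a Banach space), $Y := \mathcal{C}^0([0,T])$ with the uniform norm (a metric space), $\mathcal{D} := \mathcal{S}_0$ the conservation-law solver from \eqref{eqSolverDef}, $\mathcal{I}$ the given interface operator, $K := B_{W^{1,\infty}}(0,r)$, and $B$ the prescribed convex closed bounded set. Once these objects are matched, the argument is mostly topological bookkeeping; the only genuinely substantial point internal to this proof is the compactness of $K$ inside $Y$.

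First I would check that $K = B_{W^{1,\infty}}(0,r)$ is a compact subset of $\mathcal{C}^0([0,T])$. Each $\xi \in K$ satisfies $\|\xi\|_\infty \le r$ and, since $\|\dot\xi\|_\infty \le r$, is $r$-Lipschitz; the family is thus uniformly bounded and equicontinuous, hence relatively compact in $\mathcal{C}^0([0,T])$ by the Arzel\`a--Ascoli theorem. It is also closed: a uniform limit of $r$-Lipschitz functions bounded by $r$ is again $r$-Lipschitz and bounded by $r$, hence (being absolutely continuous) belongs to $W^{1,\infty}((0,T))$ with $\|\dot\xi\|_\infty + \|\xi\|_\infty \le r$. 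Therefore $K$ is compact in $Y$.

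Next I would take $\mathcal{D}$ to be $\mathcal{S}_0$ restricted to $K \subset W^{1,\infty}((0,T))$. Admissible solutions take values in $[0,1]$, so $\|\rho\|_\infty \le 1$ and \eqref{NonDegen} is at our disposal; Proposition~\ref{thContinuityOfS} then yields continuity of $\mathcal{S}_0 : (W^{1,\infty}((0,T)),\|\cdot\|_\infty) \to (L^1((0,T)\times\mathbb{R}),\|\cdot\|_{L^1})$, and since the topology $K$ inherits from $Y$ is exactly that of $\|\cdot\|_\infty$, the operator $\mathcal{D}: (K,\|\cdot\|_Y) \to (X,\|\cdot\|_X)$ is continuous. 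For hypothesis \eqref{eqLipStab} I would combine the assumed continuity of $\mathcal{I}: (B,\|\cdot\|_{L^1}) \to (\mathcal{C}^0([0,T]),\|\cdot\|_\infty)$ with the inclusion \eqref{seqLipStab}, namely $\mathcal{I}(B) \subset B_{W^{1,\infty}}(0,r) = K$: restricting the codomain to $K$ preserves continuity, so $\mathcal{I}: (B,\|\cdot\|_X) \to (K,\|\cdot\|_Y)$ is continuous. For \eqref{seqWellDefDoI}: given $\rho \in B$, set $\xi := \mathcal{I}(\rho) \in K$; by \eqref{seqConditionBpossible} the unique admissible solution for this turning curve — which is exactly $\mathcal{D}(\xi) = \mathcal{S}_0(\xi)$ — lies in $B$, so $\mathcal{D}\circ\mathcal{I}(B) \subset B$. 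All hypotheses of Lemma~\ref{thTopoResult} are then met (and $B \ne \emptyset$, since it contains $\mathcal{S}_0(0)$ because $0 \in K$).

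Lemma~\ref{thTopoResult} then supplies a fixed point $\rho \in B$ of $\mathcal{D}\circ\mathcal{I} = \mathcal{S}_0\circ\mathcal{I}$. Putting $\xi := \mathcal{I}(\rho) \in K \subset W^{1,\infty}((0,T))$ — this is precisely the regularity Definition~\ref{DefSolSansGerme} requires for $\mathcal{S}_0(\xi)$ to make sense — the identity $\rho = \mathcal{S}_0(\xi)$ says that $\rho$ is the admissible solution of \eqref{seqTrueCL}--\eqref{seqTrueCI} associated with $\xi$, while $\xi = \mathcal{I}(\rho)$ holds in $\mathcal{C}^0([0,T])$ by construction; hence $(\rho,\xi)$ is a solution of \eqref{eqConsideredModel} in the sense of Definition~\ref{defSolToProblem}. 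The real difficulty is not in this assembly, which amounts to matching topologies and inclusions: it sits upstream, in the proof of Proposition~\ref{thContinuityOfS} (continuity of $\mathcal{S}_0$ when $\xi$ is controlled only in the weak $\mathcal{C}^0$ topology, where \eqref{NonDegen} must be used to recover strong $L^1$ convergence of the densities) and — whenever one actually wants to invoke the theorem — in verifying the structural assumptions \eqref{seqLipStab}--\eqref{seqConditionBpossible}, i.e. exhibiting a uniform Lipschitz bound on the turning curves together with an invariant ball $B$.
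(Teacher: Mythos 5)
Your proposal is correct and follows essentially the same route as the paper: both instantiate Lemma~\ref{thTopoResult} with $X = L^1((0,T)\times\mathbb{R})$, $Y = \mathcal{C}^0([0,T])$, $K$ the (compact, by Arzel\`a--Ascoli) image of $B_{W^{1,\infty}}(0,r)$, $\mathcal{D} = \mathcal{S}_0$ made continuous via Proposition~\ref{thContinuityOfS}, and then check that \eqref{seqLipStab}--\eqref{seqConditionBpossible} yield \eqref{eqLipStab} and \eqref{seqWellDefDoI}. Your writeup merely makes explicit a few details the paper leaves implicit (the compactness of $K$ and the nonemptiness of $B$).
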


\begin{remark}
One can interpret $B$ as the set where one looks for solutions to \eqref{seqTrueCL}.
\end{remark}

The central point in order to use this theorem is to construct the set B; in below applications, two different choices for $B$ are encountered.

\subsection{Applications}

We search for properties of admissible solution in the sense of Definition \ref{DefSolSansGerme} that are independent of $\xi$. These properties, included in the construction of $B$ must guarantee that $\mathcal{I}(B)$ verifies \eqref{seqLipStab} but also that $B$ is convex, bounded and closed in $L^1((0,T)\times \mathbb{R})$.
In this subsection, we present three applications of Theorem \ref{thMainCorollary}.

\smallskip
First, we consider the operator $\mathcal{I}_0$ associated to the problem \eqref{seqElKhatibEiko} with affine cost function (further detailled in Section 3). Let us exhibit the construction of $B_1$ a set satisfying the conditions \eqref{seqConditionBpossible}-\eqref{seqLipStab} for this choice of $\mathcal{I}$.
Notice that, thanks to the $L^1$-contraction property of the admissible solution $\rho$ that is justified within the uniqueness proof in Section 2, we have:
\begin{align}
  \forall t \in [0,T], \|\rho(t,\cdot) \|_{L^1(\mathbb{R})} \leq \|\rho_0 \|_{L^1(\mathbb{R})} \nonumber \\
  \Rightarrow \|\rho \|_{L^1([0,T] \times \mathbb{R})} \leq T \|\rho_0 \|_{L^1(\mathbb{R})} \label{seqBoundedRho}
\end{align}
Furthermore, we prove that for a certain fixed constant $C > 0$ (which value will be made precise later), for any $\xi \in W^{1,\infty}$, a weak solution to \eqref{seqTrueCL} in the sense \eqref{eqSolFaible} verifies (see Lemma \ref{thControlRhoT} and also \cite{andreianov2021}):
\begin{equation}\label{eqRhoSemiContinuity}
\forall a,b \in \mathbb{R}, \, \forall s,t \in [0,T] \textrm{, }\left| \int_a^b \rho(t,x) - \rho(s,x) \d x \right| \leq C |t-s|.
\end{equation}
Finally, considering an inital datum $0 \leq \rho_0 \leq 1$, we set:
\begin{equation}\label{eqDefBSet}
  B_{1} = \biggl\lbrace
  \rho \in B_{L^1}(0,T \, \|\rho_0\|_{L^1}) \textrm{ s.t. } 0 \leq \rho \leq 1 \textrm{ and } \rho \textrm{ verifies \eqref{eqRhoSemiContinuity}} \biggr\rbrace.
\end{equation}
Applying Theorem \ref{thMainCorollary} with $B_1$ given by \eqref{eqDefBSet} we get:
\begin{proposition}\label{thLinResult}
  Assume that $\mathcal{I}_0:  B_1 \longrightarrow   \mathcal{C}^0([0,T], \mathbb{R})$ is the operator associated with equation \eqref{seqElKhatibEiko} with affine cost $c(\rho) = 1 + \alpha \rho$.
  If $f$ verifies \eqref{NonDegen},
  then there exists $(\rho,\xi)$ a solution to the problem \eqref{eqElKhatibModel} in the sense of Definition \ref{defSolToProblem}.
\end{proposition}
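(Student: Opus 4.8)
The plan is to apply Theorem~\ref{thMainCorollary} with $\mathcal{I}=\mathcal{I}_0$ and $B=B_1$ as in \eqref{eqDefBSet}. Hypothesis \eqref{NonDegen} is assumed, so what remains is to verify that $B_1$ is convex, closed and bounded in $L^1((0,T)\times\mathbb{R})$, that $\mathcal{I}_0$ maps $B_1$ continuously into $\mathcal{C}^0([0,T])$, and that \eqref{seqLipStab}--\eqref{seqConditionBpossible} hold for a suitable $r$. I fix once and for all the constant $C$ furnished by Lemma~\ref{thControlRhoT}: it depends only on $f$ (essentially $C=2\max_{[0,1]}f$) and \emph{not} on $\xi$, so $B_1$ is built with this $C$ and the forthcoming choice $r:=1+\alpha C$ carries no circular dependence.

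Boundedness of $B_1$ is immediate since $B_1\subset B_{L^1}(0,T\|\rho_0\|_{L^1})$, and convexity follows because each defining constraint — the $L^1$ bound, the pointwise bounds $0\le\rho\le1$, and the inequality \eqref{eqRhoSemiContinuity} — is preserved under convex combinations. For closedness, if $\rho_n\to\rho$ in $L^1((0,T)\times\mathbb{R})$ then along a subsequence $\rho_n\to\rho$ a.e.\ (preserving $0\le\rho\le1$) and $\rho_n(t,\cdot)\to\rho(t,\cdot)$ in $L^1(\mathbb{R})$ for a.e.\ $t$, so \eqref{eqRhoSemiContinuity} passes to the limit. I would also record here that $\mathcal{I}_0$ is well defined on $B_1$ with values in $\mathcal{C}^0$: setting $G(t,y):=\int_{-1}^{y}c(\rho(t,x))\,\d x-\int_{y}^{1}c(\rho(t,x))\,\d x$ with $c(\rho)=1+\alpha\rho$, the map $G(t,\cdot)$ is strictly increasing with $G(t,y_2)-G(t,y_1)\ge 2(y_2-y_1)$ for $y_1<y_2$ (since $c\ge1$) and $G(t,-1)\le0\le G(t,1)$, hence has a unique root $\xi(t)\in[-1,1]$; moreover $|G(t,y)-G(s,y)|\le 2\alpha C|t-s|$ by \eqref{eqRhoSemiContinuity} applied on $[-1,y]$ and on $[y,1]$, which combined with the slope bound yields $|\xi(t)-\xi(s)|\le \alpha C|t-s|$. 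Thus $\|\xi\|_\infty\le1$ and $\|\dot\xi\|_\infty\le\alpha C$, i.e.\ $\mathcal{I}_0(B_1)\subset B_{W^{1,\infty}}(0,r)$ with $r=1+\alpha C$: this is exactly \eqref{seqLipStab}.

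For continuity of $\mathcal{I}_0:(B_1,\|\cdot\|_{L^1})\to(\mathcal{C}^0([0,T]),\|\cdot\|_\infty)$, let $\rho_n\to\rho$ in $L^1((0,T)\times\mathbb{R})$ and put $\xi_n:=\mathcal{I}_0(\rho_n)$, $\xi:=\mathcal{I}_0(\rho)$. Since $|c(\rho_n(t,\cdot))-c(\rho(t,\cdot))|=\alpha|\rho_n(t,\cdot)-\rho(t,\cdot)|$, one gets $\|G_n(t,\cdot)-G(t,\cdot)\|_\infty\le\alpha\|\rho_n(t,\cdot)-\rho(t,\cdot)\|_{L^1(\mathbb{R})}$, and together with the uniform slope bound $\ge2$ this gives $|\xi_n(t)-\xi(t)|\le\alpha\|\rho_n(t,\cdot)-\rho(t,\cdot)\|_{L^1(\mathbb{R})}$ for a.e.\ $t$. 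From any subsequence one extracts a further subsequence along which this $L^1(\mathbb{R})$ distance tends to $0$ for a.e.\ $t$, hence $\xi_{n_k}(t)\to\xi(t)$ a.e.; as the $\xi_{n_k}$ are equi-Lipschitz (constant $\alpha C$) and uniformly bounded because the $\rho_{n_k}$ lie in $B_1$, Arzelà--Ascoli upgrades this to uniform convergence, and the limit must be $\xi$. Since every subsequence admits a further subsequence converging uniformly to $\xi$, the whole sequence does.

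Finally, to check \eqref{seqConditionBpossible}: for $\xi\in B_{W^{1,\infty}}(0,r)$ the unique admissible solution $\rho=\mathcal{S}_0(\xi)$ satisfies $0\le\rho\le1$ (built into the admissible-solution notion together with well-posedness of $\mathcal{S}_0$), $\|\rho\|_{L^1((0,T)\times\mathbb{R})}\le T\|\rho_0\|_{L^1}$ by \eqref{seqBoundedRho} (the $L^1$-contraction of Section~2), and \eqref{eqRhoSemiContinuity} with constant $C$ by Lemma~\ref{thControlRhoT}, the point being that $\sign(x-\xi(t))f(\rho)$ is bounded by $\max_{[0,1]}f$ uniformly in $\xi$; hence $\rho\in B_1$. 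All hypotheses of Theorem~\ref{thMainCorollary} hold, so $\mathcal{S}_0\circ\mathcal{I}_0$ has a fixed point $\rho\in B_1$, and $(\rho,\mathcal{I}_0(\rho))$ solves \eqref{eqElKhatibModel} in the sense of Definition~\ref{defSolToProblem}. I expect the delicate point to be the continuity of $\mathcal{I}_0$: only $L^1((0,T)\times\mathbb{R})$ convergence of the densities is available, which a priori yields convergence of the turning curves only a.e.\ in $t$ along subsequences, so the uniform Lipschitz bound coming from \eqref{eqRhoSemiContinuity} must be used to promote it to uniform-in-$t$ convergence; keeping the constants depending only on $f$ and $\alpha$ (so that $r$ and $B_1$ are fixed consistently) is the other point needing care.
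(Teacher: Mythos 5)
Your proposal is correct and follows essentially the same route as the paper: it verifies the hypotheses of Theorem~\ref{thMainCorollary} for $B=B_1$ exactly as in Proposition~\ref{thCoutLineaireLS}, with the Lipschitz bound and continuity of $\mathcal{I}_0$ derived from the same two ingredients (the time-continuity estimate of Lemma~\ref{thControlRhoT} and the lower bound $c\ge 1$ giving slope at least $2$ for the defining integral equation, i.e.\ Lemma~\ref{thLipEstimate} in a slightly different dressing). The only cosmetic differences are your use of Arzel\`a--Ascoli in place of the paper's $\epsilon$-network argument to upgrade a.e./$L^1$ convergence of the turning curves to uniform convergence, and the (slightly more careful) choice $r=1+\alpha C$ accounting for the $\|\xi\|_\infty$ part of the norm on $B_{W^{1,\infty}}(0,r)$.
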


As a second case, we treat ${\mathcal{I}}_\delta$ the operator associated with a modified version of equation \eqref{seqElKhatibEiko} where $\rho$ is replaced by an average density over recent past in equation \eqref{seqElKhatibEiko} (see \eqref{seqModifiedElKhatib}).
This modification is inspired by the use of ``subjective density'' in pedestrian and traffic flows, proposed, e.g., in \cite{Carillo2016} and \cite{Andreianov2014CDA,Donadello2015} (cf. Section~\ref{sec:constrained-exit} where subjective densities are used to model constrained evacuation at exits); this choice introduces inertia effect into agents' perception of the crowd densities. In that setting, we can prove that the image of $\mathcal{I}_{\delta}$ is contained in a bounded subset of $W^{1,\infty}((0,T))$ without requiring the property \eqref{eqRhoSemiContinuity}.
Consequently, we recover the global existence result for any cost $c$ verifying \eqref{eqCostConditions} with the set $B_2$ merely given by:
$$B_2 = \biggl\lbrace \rho \in B_{L^1}(0,T \, \|\rho_0\|_{L^1}) \textrm{ s.t. } 0 \leq \rho \leq 1 \biggr\rbrace.$$

\smallskip
As a third example, we consider $\widetilde{\mathcal{I}_{\epsilon}}$ the operator associated with problem \eqref{seqElKhatibEiko} with a relaxed equilibrium, modeling, in a way different from  $\mathcal{I}_\delta$, inertia effect of the interface dynamics. In this case, the set $B_2$ also
satisfies all the conditions in order to apply Corollary \ref{thMainCorollary}.

\smallskip
Finally, another series of applications (which is an extension of all the previous results to models with different, phenomenologically relevant behavior of agents in exits) is provided in Section~\ref{sec:constrained-exit}.

\subsection{Outline}

In Section 2, we prove the main results of this paper, respectively Theorem \ref{thMainCorollary} and Lemma \ref{thTopoResult}, Proposition \ref{thContinuityOfS}. These proofs hold in an abstract framework where the choice of $\mathcal{I}$ and $B$ are not prescribed.
Then, in Section 3, we detail the construction involving the set $B_1$ satisfying the assumptions of Theorem \ref{thMainCorollary} in the case of $\mathcal{I}_0$ being the operator associated with equation \eqref{seqElKhatibEiko} with affine cost. We also discuss the case of a general cost satisfying \eqref{eqCostConditions} and solve it for the modified operators ${\mathcal{I}}_{\delta}$ and
$\widetilde{\mathcal{I}_{\epsilon}}$ using the set $B_2$.
Eventually, in Section 4, we extend Theorem \ref{thMainCorollary} in a situation with constrained evacuation at exits $x=\pm 1$.

\section{Proof of the main result}

We first deduce Lemma \ref{thTopoResult} from the Schauder fixed-point theorem.

\begin{proof}[Proof of Lemma \ref{thTopoResult}]
We recall that, thanks to condition \eqref{eqLipStab}, $\mathcal{D} \circ \mathcal{I}$ is well defined. What's more, $\mathcal{D}$ and $\mathcal{I}$ are continuous.
So $\mathcal{D} \circ \mathcal{I}$ is continuous from $B$ into itself. Take any subset $A$ of $B$. The set $\mathcal{I}(A) \subset K$ is a relatively compact set in $(Y, \|\cdot\|_Y)$.
Since $\mathcal{D}$ is continuous from $(K, \|\cdot\|_Y)$ into $(X, \|\cdot\|_X)$, $\mathcal{D} \circ \mathcal{I}(A)$ is a relatively compact subset of $X$.
We consequently have $\mathcal{D} \circ \mathcal{I}$ a compact operator from $B$ into itself. Furthermore $B$ is bounded closed convex subset of a Banach space $X$.
We apply Schauder fixed-point theorem (see \cite{Zeidler2012-im}) and conclude to the existence of a fixed point in $B$.
\end{proof}

\smallskip

In order to apply Lemma \ref{thTopoResult} with $\mathcal{D}= \mathcal{S}_0$ the solver associated with the notion of solution of Definition \ref{DefSolSansGerme} ( see \eqref{eqSolverDef} ), we first
need to check that $\mathcal{S}_0$ is well defined from $W^{1,\infty}((0,T))$ into $L^1((0,T)\times \mathbb{R})$ when $\|\rho_0\|_{L^1(\mathbb{R})} < + \infty$.
This is equivalent to well-posedness for the problem \eqref{CLeq}.

\smallskip
We prove below that, thanks to the particular choice of fluxes on each side of the turning curve (emphasized in Remark \ref{remFluxesLR}), Definition \ref{DefSolSansGerme} is restrictive enough to grant uniqueness. This notion of solution is however less restrictive than the one proposed in \cite{ElKhatib2013OnEW,Amadori2012-na}.
It implies that both notions are equivalent, also the existence of such solutions is then directly inherited from the proof found in \cite{Amadori2012-na}.
Note that one can prove the existence result for our notion of solution through the convergence of a finite volume scheme (we do so in Section 4, in the context of flux-limited exit behavior at the exits $x= \pm 1$).

\begin{theorem}\label{thPreuveUnicite}
Let $ \rho $,$\hat{\rho}$ be two entropy solutions in the sense of Definition \ref{DefSolSansGerme} with initial datum $\rho_0$ (resp. $\hat{\rho}_0$).
Let $L_{f}$ be the lipschitz constant of $f$.
If $\xi \in W^{1,\infty}((0,T))$, we have:
\[ \textrm{for a.e. } t \in [0,T], \forall a,b \in \mathbb{R} \textrm{, } \int_a^b |\rho(t,x) - \hat{\rho}(t,x)| dx \leq \int_{a-L_f t}^{b+L_f t} |\rho^0(x) - \hat{\rho}^0(x)| dx. \]
In particular, there exists at most one entropy solution associated to a given initial datum $\rho_0$.
\end{theorem}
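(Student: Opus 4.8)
The plan is to run Kruzhkov's doubling-of-variables argument localized to the two open regions on either side of the turning curve, show that the curve contributes nothing to the entropy balance, and then conclude by the classical finite-speed estimate. \textbf{Step 1 (Kato inequality off the curve):} on each of the open sets $\{x<\xi(t)\}$ and $\{x>\xi(t)\}$ the coefficient $\sign(x-\xi(t))$ is locally constant, so $\rho$ and $\hat\rho$ are there genuine Kruzhkov entropy solutions of $\rho_t\mp f(\rho)_x=0$; doubling the variables $(t,x)$, $(s,y)$ and passing to the diagonal in the standard way, I would obtain
\[
  \partial_t|\rho-\hat\rho| + \partial_x\Big(\sign(\rho-\hat\rho)\big(F(t,x,\rho)-F(t,x,\hat\rho)\big)\Big)\le 0 \quad\text{in } \mathcal D'\big(\{x\ne\xi(t)\}\big),
\]
together with the initial trace $|\rho-\hat\rho|(t,\cdot)\to|\rho_0-\hat\rho_0|$ in $L^1_{loc}$ as $t\to 0^+$ (recall Remark \ref{remCancesContinuity}).

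\textbf{Step 2 (the turning curve is automatically dissipative).} Write $w:=|\rho-\hat\rho|$ and $\Phi:=\sign(\rho-\hat\rho)(F(\cdot,\rho)-F(\cdot,\hat\rho))$. To handle the interface I would straighten it via $y=x-\xi(t)$, turning the equation into a conservation law on the fixed interface $\{y=0\}$ with the space-discontinuous, time-$L^\infty$ flux $G(t,y,\rho)=\sign(y)f(\rho)-\dot\xi(t)\,\rho$; one-sided traces of $\rho$, $\hat\rho$ and $\Phi$ along $\{y=0\}$ are then available from boundary-trace theory for scalar conservation laws (strong traces under the extra non-degeneracy \eqref{NonDegen}, weak normal traces in Otto's sense in general). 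Writing $\pm$ for the traces from $\{x>\xi(t)\}$ and $\{x<\xi(t)\}$, the singular part of $\partial_t w+\partial_x\Phi$ concentrated on $\Sigma:=\{x=\xi(t)\}$ equals $(\Phi^+-\Phi^-)-\dot\xi\,(w^+-w^-)$ times a positive multiple of the arclength measure along $\Sigma$, and the aim is to show this quantity is $\le 0$. For that I would combine the Rankine-Hugoniot identities $-f(\rho^-)-\dot\xi\rho^-=f(\rho^+)-\dot\xi\rho^+$ coming from the weak formulation \eqref{eqSolFaible} (and the analogues for $\hat\rho$) with the one-sided Kruzhkov inequalities \eqref{eqSolEntropy}, which hold up to $\Sigma$ from each side and constrain which pairs of traces may occur. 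Structurally, the claim is that because the flux equals $-f\le 0$ just left of $\Sigma$ and $+f\ge 0$ just right of it, with $f\ge 0$ and $f(0)=f(1)=0$, the germ consisting of all Rankine-Hugoniot pairs at the interface is $L^1$-dissipative; this is precisely why Definition \ref{DefSolSansGerme} can impose no admissibility condition at the curve beyond conservativity, and why it is equivalent to the stricter notions of \cite{ElKhatib2013OnEW,Amadori2012-na}. Granting this sign, the inequality of Step 1 upgrades to one valid on all of $(0,T)\times\mathbb R$.

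\textbf{Step 3 (finite speed and conclusion).} I would then integrate the global Kato inequality against a test function approximating $\mathbb{1}_{\mathcal C}$, for the truncated backward cone $\mathcal C=\{(s,z):0\le s\le t,\ a-L_f(t-s)\le z\le b+L_f(t-s)\}$. Since $F(t,x,\cdot)$ is $L_f$-Lipschitz we have $|\Phi|\le L_f w$, so on the lateral faces of $\mathcal C$ the integrand $w\,n_s+\Phi\,n_z$ against the outer normal is $\ge 0$ and may be dropped, while the top and bottom faces contribute exactly $\int_a^b w(t,\cdot)$ and $-\int_{a-L_f t}^{b+L_f t}|\rho_0-\hat\rho_0|$. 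This yields
\[
  \int_a^b|\rho(t,x)-\hat\rho(t,x)|\,dx \le \int_{a-L_f t}^{b+L_f t}|\rho_0(x)-\hat\rho_0(x)|\,dx \quad\text{for a.e. } t\in[0,T],
\]
and taking $\hat\rho_0=\rho_0$ and then $a\to-\infty$, $b\to+\infty$ forces $\rho=\hat\rho$.

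The hard part will be Step 2: first, making rigorous sense of the one-sided traces along the Lipschitz curve; and second, the sign bookkeeping showing that Rankine-Hugoniot together with the one-sided entropy inequalities already excludes every non-dissipative pair of traces. This last point is where the particular sign pattern of the flux $\sign(x-\xi(t))f(\rho)$ enters essentially, and it is precisely the step not covered by off-the-shelf Kruzhkov theory.
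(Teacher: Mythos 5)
Your overall architecture --- Kato inequality from Kruzhkov doubling in each of the open regions $\{x<\xi(t)\}$ and $\{x>\xi(t)\}$, analysis of the singular term concentrated on the Lipschitz curve via one-sided strong traces, then integration over a backward cone of slope $L_f$ --- is exactly the paper's. Steps 1 and 3 are fine as written. The problem is that your Step 2, which you yourself flag as ``the hard part'', is precisely the entire content of the paper's proof, and you leave it as an announced claim rather than an argument. Moreover, the mechanism you propose for it (combining Rankine--Hugoniot with the one-sided entropy inequalities \eqref{eqSolEntropy} to ``exclude non-dissipative pairs of traces'') is not the one that works: in Definition \ref{DefSolSansGerme} the inequalities \eqref{eqSolEntropy} are only required for test functions compactly supported \emph{away} from the curve, and the paper never invokes any entropy information on $\{x=\xi(t)\}$. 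Only the Rankine--Hugoniot relation (a consequence of \eqref{eqSolFaible} and the existence of strong traces) is used there.

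What actually closes Step 2 is a purely geometric observation about the pair $(f_L,f_R)=(-f,f)$. Writing $q_{L,R}(u,\hat u)=\sign(u-\hat u)\,[\,f_{L,R}(u)-f_{L,R}(\hat u)-\dot\xi(t)(u-\hat u)\,]$, subtracting the Rankine--Hugoniot identities for $\rho$ and for $\hat\rho$ shows that the bracketed flux differences on the two sides of the curve are \emph{equal}; it remains to compare the sign factors. For this one restricts attention to the sets $\Gamma^{L}$, $\Gamma^{R}$ of trace values compatible with Rankine--Hugoniot and checks, using that $u\mapsto f_R(u)-\dot\xi(t)u$ is concave and nonnegative up to the linear correction while $u\mapsto f_L(u)-\dot\xi(t)u$ is convex and nonpositive up to the same correction (both vanishing at $u=0$ and equal at $u=1$), that these two functions are monotone \emph{in the same sense} on $\Gamma^{R}$ and $\Gamma^{L}$ respectively. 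Hence $\sign(\gamma_R\rho-\gamma_R\hat\rho)=\sign(\gamma_L\rho-\gamma_L\hat\rho)$ whenever the flux differences are nonzero, and the interface term $q_R(\gamma_R\rho,\gamma_R\hat\rho)-q_L(\gamma_L\rho,\gamma_L\hat\rho)$ is not merely $\le 0$ but identically zero: every Rankine--Hugoniot pair is automatically conservative for the Kruzhkov entropy flux, which is the precise sense in which ``the germ of all RH pairs is dissipative'' here, and the reason Definition \ref{DefSolSansGerme} needs no admissibility condition at the curve. Without this monotonicity comparison your Step 2 is an assertion, not a proof, and since it is the only place where the specific sign structure of the discontinuous flux enters, the proposal as written does not yet establish the theorem. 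A minor further point: the existence of strong one-sided traces along the Lipschitz curve comes from Vasseur-type trace theory (this is where genuine nonlinearity of $f$ is used); your fallback to weak normal traces in Otto's sense would force a reformulation of the sign argument, which is stated for strong traces.
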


In order to prove this Theorem, we introduce notation for the right and left strong traces of $\rho$ along a Lipschitz curve $\xi$. Let $\xi \in W^{1,\infty}((0,T), \mathbb{R})$. Then, $\gamma_{L} \rho (t) \in L^{\infty}((0,T))$ (resp. $\gamma_{R} \rho (t)$ ) is such that,
for any $\phi \in \mathcal{C}^0([0,1])$,
$$ \ess \lim_{\epsilon \rightarrow 0^+} \frac{1}{\epsilon} \int_0^T \int_{\xi(t) - \epsilon}^{\xi(t)} \left| \phi(\rho(t,x)) - \phi(\gamma_L \rho(t)) \right| \d x \d t = 0 $$
$$\left( \textrm{ respectively, } \ess \lim_{\epsilon \rightarrow 0^+} \frac{1}{\epsilon} \int_0^T \int_{\xi(t)}^{\xi(t) + \epsilon} \left| \phi(\rho(t,x)) - \phi(\gamma_R \rho (t)) \right| \d x \d t = 0 \right) $$
The existence of those traces is proven in \cite{Vasseur2001-ig}.
\begin{remark}
Generalization of the approach of the present paper to general cost function $c$, for the original Hughes' model, may require going below the Lipschitz regularity of $\xi$. In this respect, let us point out that extension of the above uniqueness claim to $W^{1,1}$ regularity of $\xi$ is feasible, while weakening the regularity of $\xi$ even more presents a serious difficulty for the theory of discontinuous-flux conservation laws \cite{AndrKarlRis-2011}.
\end{remark}

\begin{proof}[Proof of Theorem \ref{thPreuveUnicite}]
Remembering Remark \ref{remFluxesLR} and for a more comprehensive presentation of the proof, we denote $f_R = f$ and $f_L = -f$.\\
To main idea of the proof consists of using Kruzkhov's doubling variable technique (see \cite{Evans1998-au}) on each side of the curve $\left\lbrace x = \xi(t)
\right\rbrace$. Since $\xi$ is Lipschitz continuous we can join both pieces getting left and right traces along this turning curve, following the general approach as in \cite{AndrKarlRis-2011,Andreianov2014CDA}. We get, for any $\phi \in \mathcal{D}^+$,
\begin{equation}\label{eqCenterTerme}\tag{\textasteriskcentered}
   - \iint_{\Omega} |\rho-\hat{\rho}| \phi_t + q(\rho,\hat{\rho}) \phi_x
\leq \int_0^T \phi(t,\xi(t)) \left[ q_R(\gamma_R \rho, \gamma_R \hat{\rho} )
- q_L(\gamma_L \rho, \gamma_L \hat{\rho}) \right]
\end{equation}
where \( q_{L,R}(\rho,\hat{\rho}) := \sign(\rho-\hat{\rho})\left[ f_{L,R}(\rho) - f_{L,R}(\hat{\rho}) - \dot{\xi}(t)(\rho - \hat{\rho}) \right] \).

\smallskip
On another side, using traces' existence, we also recover from \eqref{eqSolFaible} the Rankine-Hugoniot condition:
\begin{empheq}{align}\label{eqRH} \tag{$**_{\textstyle \rho}$}
\textrm{for a.e. } t \in (0,T) \textrm{, }
f_R(\gamma_R \rho (t)) - \dot{\xi}(t) \gamma_R \rho(t) = f_L(\gamma_L \rho (t) ) - \dot{\xi}(t) \gamma_L \rho(t)
\end{empheq}
We also have the analogous relation for $\hat{\rho}$ that we denote ($**_{\textstyle \hat\rho}$).

\smallskip
Fix $t \in (0,T)$ such that \eqref{eqRH} and ($**_{\textstyle \hat\rho}$) are true.
We denote the set of values for $\gamma_L \rho$ (resp. $\gamma_R \rho$) that verify \eqref{eqRH}:
\[ \Gamma^{L,R} := \left\lbrace a \in \mathbb{R} \textrm{ s.t. } \exists b \in \mathbb{R}, f_{L,R}(a) - \dot{\xi}(t)a = f_{L,R}(b) - \dot{\xi}(t)b \right\rbrace. \]
Due to the particular choice of the pair of fluxes $(f_L,f_R)$, those sets are non-empty. Its geometries are pictured below.

\setlength{\unitlength}{1cm}

\begin{tikzpicture}
  \draw[pattern=north west lines, pattern color=gray] (0,0) rectangle (1.76,3);
  \draw[pattern=north west lines, pattern color=gray] (2.24,0) rectangle (4,3);

  \draw (-0.5,0) -> (5,0);
  \draw (0,-0.5) -> (0,4);
  \filldraw [black] (0,0) circle (2pt);
  \filldraw [black] (4,3) circle (2pt);
  \filldraw [black, thick] (-0.5,0) -> (4,0);
  \filldraw [black,thick] (-0.5,3) -> (4,3);
  \filldraw[darkgray] (0.5,0.3) node[anchor=west]{$\Gamma^R$};
  \filldraw[darkgray] (2.8,0.3) node[anchor=west]{$\Gamma^L$};

  \draw (0,0) .. controls (1.33,3.7) and (2.66,3.7) .. (4,3);
  \draw (0,0) .. controls (1.33,-0.7) and (2.66,-0.7)  .. (4,3);
  \filldraw[black] (3.2,1) node[anchor=west]{$y = f_L(x) - \dot{\xi}(t)x$};
  \filldraw[black] (0.7,3.7) node[anchor=west]{$y = f_R(x) - \dot{\xi}(t)x$};
\end{tikzpicture}

Recalling the properties of $f_L$ and $f_R$ emphasized in Remark \ref{remFluxesLR} and using the signs of $f_L'$ and $f_R'$, we let the reader verify that, for any $\dot{\xi}(t)$, $x \mapsto f_R(x) - \dot{\xi}(t)x$ has the same monotonicity on $\Gamma_R$ as $x \mapsto f_L(x) - \dot{\xi}(t)x$ on $\Gamma_L$.

\smallskip
Consequently, if $(\gamma_L \rho, \gamma_R \rho)$ verifies \eqref{eqRH} and $(\gamma_L \hat{\rho}, \gamma_R \hat{\rho})$ verifies ($**_{\textstyle \hat\rho}$),

\smallskip
\begin{itemize}
\item $\sign(\gamma_R \rho- \gamma_R \hat{\rho})\sign\left(f_R(\gamma_R \rho) - f_R(\gamma_R \hat{\rho}) - \dot{\xi}(t)(\gamma_R \rho  -\gamma_R \hat{\rho}) \right)$\\
 \hspace*{10pt}$= \sign(\gamma_L \rho- \gamma_L \hat{\rho})\sign\left(f_L(\gamma_L \rho) - f_L(\gamma_L \hat{\rho}) - \dot{\xi}(t)(\gamma_L \rho - \gamma_L \hat{\rho}) \right)$

 \smallskip
\item \eqref{eqRH}-($**_{\textstyle \hat\rho}$) implies that \\
$f_R(\gamma_R \rho) - f_R(\gamma_R \hat{\rho}) - \dot{\xi}(t)(\gamma_R \rho  - \gamma_R v)  = f_L(\gamma_L \rho) - f_L(\gamma_L \hat{\rho}) - \dot{\xi}(t)(\gamma_L \rho - \gamma_L \hat{\rho}). $

\end{itemize}

\medskip
Therefore we have:
\[ \textrm{for a.e. } t \in (0,T) \textrm{,  } \, \, q_R(\gamma_R \rho, \gamma_R \hat{\rho}) - q_L(\gamma_L \rho, \gamma_L \hat{\rho}) = 0. \]

Consequently, from \eqref{eqCenterTerme}, we recover the global Kato's inequality: for any $\phi \in \mathcal{D}^+(\Omega)$,
\[ - \iint |\rho-\hat{\rho}| \phi_t + q(\rho,\hat{\rho}) \phi_x
\leq 0. \]

The remaining arguments are identical to the classical framework of Kruzkhov. Integrating on the trapezoid \( \mathbb{1}_{[0,t]}(s)\mathbb{1}_{[a-L_f(t-s),b+L_f(t-s)]}(x) \), $L_f$ being the Lipschitz constant of $f$, we get the localized $L^1$ contraction property:
\begin{equation}\label{eqL1Contract}
   \int^b_a |\rho(t,x) -\hat{\rho}(t,x)| dx \leq \int^{b +L_f t}_{a - L_f t} |\rho(0,x) - \hat{\rho}(0,x)| dx.
 \end{equation}
\end{proof}

Consequently, the solver operator $\mathcal{S}_0$ is well defined from $W^{1,\infty}((0,T))$ into $L^1((0,T)\times \mathbb{R})$.
In order to apply Lemma \ref{thTopoResult} with $\mathcal{D} = \mathcal{S}_0 : \left(W^{1,\infty}((0,T)), \|\cdot \|_{\infty} \right) \longrightarrow \left( L^1((0,T)\times \mathbb{R}), \|\cdot \|_{L^1((0,T)\times \mathbb{R})} \right) $, we also show the continuity of this operator.
Let's denote for any $a<b\in \mathbb{R}, s < t \in [0,T]$, the trapezoid:
\begin{equation}\label{eqDefTrapez}
  \mathcal{T}_{a,b}^{s,t} := \bigl\{ (\tau,x) \in (0,T)\times \mathbb{R} \textrm{ s.t. }
  \tau \in [s,t], \, x \in ( a + (\tau-s) L_f \, , \, b - (\tau-s) L_f ) \bigr\},
  \end{equation}
  where $L_f$ is the Lipschitz constant of $f$.
We isolate the following useful lemma that comes from \eqref{eqL1Contract}.
\begin{lemma}\label{thEqualityTrapezoid}
  Let $\rho_0$ satisfy \eqref{eq:HypRho_0}, $\xi \in W^{1,\infty}((0,T))$ and $\rho$ be the entropy solution in the sense of Definition \ref{DefSolSansGerme} to \eqref{CLeq} on $(0,T) \times \mathbb{R}$.
  Denote $\hat{\rho}$ the Kruzhkov entropy solution on $(s,t)\times \mathbb{R}$ to
  \footnote{Here $\rho(s,\cdot)$ is understood in view of $s$ being a Lebesgue's point of $\rho \in L^{\infty}((0,T), L^1(\mathbb{R}))$. Recalling Remark \ref{remCancesContinuity}, this is in fact true for any $s \in [0,T]$.}
  $$\left\lbrace \begin{matrix}
  \hat{\rho}_t + f(\hat{\rho})_x = 0 \\
  \hat{\rho}(s, \cdot) = \rho(s, \cdot) \mathbb{1}_{(a,b)}(\cdot).
  \end{matrix} \right.$$

  Then, for any $a<b\in \mathbb{R}, s < t \in [0,T]$, there holds
  \begin{equation}\label{eqEqualityInL}
    \mathcal{T}_{a,b}^{s,t} \subset \{ x > \xi(t) \} \;\Longrightarrow\; \rho = \hat{\rho} \textrm{ a.e.} \text{ on $\mathcal{T}_{a,b}^{s,t}$}.
    \end{equation}
\end{lemma}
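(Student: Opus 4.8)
The statement is a finite-speed-of-propagation / domain-of-dependence claim, so the natural route is to localize the $L^1$-contraction inequality \eqref{eqL1Contract} to the trapezoid $\mathcal{T}_{a,b}^{s,t}$ and exploit that, under the hypothesis $\mathcal{T}_{a,b}^{s,t}\subset\{x>\xi(t)\}$, the discontinuous flux $F(t,x,\cdot)=\sign(x-\xi(t))f(\cdot)$ coincides with the single smooth flux $f$ throughout the trapezoid. Concretely: first I would observe that the trapezoid, being closed and contained in the open region $\{x>\xi(t)\}$ (here one should note that $\{x=\xi(t)\}$ is a closed set, since $\xi$ is continuous, and $\mathcal{T}_{a,b}^{s,t}$ is compact, so in fact there is a positive distance $\delta_0$ between them), lies entirely on the right side of the turning curve. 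Hence on a neighbourhood of $\mathcal{T}_{a,b}^{s,t}$ inside $(s,t)\times\mathbb{R}$ the function $\rho$ is, by Definition~\ref{DefSolSansGerme}, a Kruzhkov entropy solution of the \emph{classical} conservation law $\rho_\tau+f(\rho)_x=0$: the weak formulation \eqref{eqSolFaible} gives the PDE and the entropy inequalities \eqref{eqSolEntropy}, which are required precisely on test functions supported in $\{x>\xi(t)\}$, give the Kruzhkov inequalities there. No admissibility condition at the interface is involved because the interface does not meet the trapezoid.

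Next I would compare $\rho$ with $\hat\rho$, the Kruzhkov solution on $(s,t)\times\mathbb{R}$ with datum $\rho(s,\cdot)\mathbb{1}_{(a,b)}$, using the standard Kruzhkov doubling-of-variables argument localized to the backward light cone. Both $\rho$ and $\hat\rho$ satisfy the same entropy inequalities for $f$ on a neighbourhood of the trapezoid (for $\hat\rho$ this holds on all of $(s,t)\times\mathbb{R}$, for $\rho$ on $\{x>\xi(t)\}\cap(s,t)\times\mathbb{R}$), so doubling variables yields Kato's inequality $-\iint |\rho-\hat\rho|\varphi_\tau + q_f(\rho,\hat\rho)\varphi_x\,\d\tau\,\d x\le 0$ for nonnegative $\varphi$ whose support, together with the support of the comparison kernel generated during the doubling, stays inside $\{x>\xi(t)\}$ — which is guaranteed by the positive separation $\delta_0$ provided $t-s$ is first taken small enough, and then the general case is recovered by iterating on a partition of $[s,t]$ (or simply by choosing the test functions adapted to the shrinking trapezoid, whose slanted sides have exactly slope $L_f$). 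Integrating Kato's inequality against the characteristic function of $\mathcal{T}_{a,b}^{\tau',t}$ and letting $\tau'\downarrow s$, using that $\rho(s,\cdot)=\hat\rho(s,\cdot)$ on $(a,b)$ by construction, one gets $\int_{\mathcal{T}_{a,b}^{s,t}\cap\{\tau=t\}}|\rho-\hat\rho|=0$, and more generally the same on every horizontal slice $\tau=\tau'\in[s,t]$; since $\rho,\hat\rho\in\mathcal{C}^0([s,t],L^1_{loc})$ this forces $\rho=\hat\rho$ a.e. on $\mathcal{T}_{a,b}^{s,t}$.

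**The main obstacle.** The only genuinely delicate point is making the localization rigorous: one must ensure that the doubling-of-variables computation never "sees" the interface $\{x=\xi(t)\}$. This is where the compactness of $\mathcal{T}_{a,b}^{s,t}$ and the continuity of $\xi$ enter, giving $\delta_0:=\mathrm{dist}\big(\mathcal{T}_{a,b}^{s,t},\{x=\xi(t)\}\big)>0$; one then runs the doubling argument with mollification parameter smaller than $\delta_0$ and on a slightly enlarged trapezoid still contained in $\{x>\xi(t)\}$, so that all terms generated by the entropy inequalities are legitimate (supported away from the interface). Everything else — the derivation of Kato's inequality and its integration on the trapezoid — is verbatim the classical Kruzhkov argument already invoked in the proof of Theorem~\ref{thPreuveUnicite}, so I would not reproduce it in detail but cite \cite{Evans1998-au} and the localized version used above. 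One should also record the obvious symmetric statement (with $\{x<\xi(t)\}$ and the flux $-f$, equivalently $f_L$), which is proved identically and is what gets used on the left of the turning curve.
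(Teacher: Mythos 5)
Your argument is correct and is essentially the paper's: the authors simply assert that the lemma ``immediately follows from \eqref{eqL1Contract}'', i.e.\ from the localized Kato/contraction inequality obtained by Kruzhkov doubling of variables away from the turning curve, which is exactly the computation you spell out (including the key point that on the trapezoid both $\rho$ and $\hat\rho$ are Kruzhkov entropy solutions for the single flux $f$, so the coincidence of their data on $(a,b)$ at time $s$ propagates along the domain of determinacy). The only nitpick is that $\mathcal{T}_{a,b}^{s,t}$ as defined in \eqref{eqDefTrapez} is open in $x$, so its closure may touch $\{x=\xi(t)\}$ and your $\delta_0$ may vanish in degenerate cases; this is repaired by running the argument on $\mathcal{T}_{a+\eta,b-\eta}^{s,t}$ (whose bottom edge still carries coinciding data) and letting $\eta\to 0$.
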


\begin{proof}
  This lemma immediatly follows from \eqref{eqL1Contract}.
\end{proof}

We now prove Proposition \ref{thContinuityOfS} using this lemma.

\begin{proof}[Proof of Proposition \ref{thContinuityOfS}]
Consider $(\xi_n)_{n \in \mathbb{N}}$ and $ \xi \in W^{1,\infty}((0,T))$ such that $\|\xi_n - \xi\|_{\infty} \longrightarrow 0$. We denote $\rho_n := \mathcal{S}_0(\xi_n)$.
Let $K$ a compact subset of $\left\{x > \xi(t)\right\}$. Let $\epsilon > 0$ such that $K \subset \left\{ x > \xi(t) + \epsilon \right\}$.

\smallskip
We cover $K$ by a finite number of trapezoids of the form \eqref{eqDefTrapez}. Without loss of generality we can suppose that each trapezoid is contained in $\left\{ x > \xi(t) + \epsilon \right\}$:
$$ K \subset \bigcup_{i \in I} \mathcal{T}_{a_i,b_i}^{s_i,t_i} \subset \left\{ x > \xi(t) + \epsilon \right\}, \, \card(I) < + \infty.$$

Since $\|\xi_n - \xi\|_{\infty} \longrightarrow 0$, for any
$\epsilon > 0$, there exists $n_0 \in \mathbb{N}$ such that $\forall t \in [0,T], \, n \geq n_0
\Rightarrow |\xi_n(t) - \xi(t)| \leq \epsilon $.\\
This implies $\xi_n(t) \in [ \xi(t) - \epsilon ; \xi(t) + \epsilon]$. Then,
\begin{align}
  \forall x \in \mathbb{R} \backslash [ \xi(t) - \epsilon ; \xi(t) + \epsilon] \textrm{, } \sign(x - \xi_n(t)) = \sign(x-\xi(t)). \label{interesEqSign}
\end{align}
Then, for such a $n_0$, for any $n \geq n_0$, each trapezoid $\mathcal{T}_{a_i,b_i}^{s_i,t_i} \subset \{ x > \xi_n(t) \}$.
Using Lemma \ref{thEqualityTrapezoid}, for any $n \geq n_0$, $\rho_n$ is equal almost everywhere in $\mathcal{T}_{a_i,b_i}^{s_i,t_i}$ to the Kruzhkov entropy solution of:
$$\left\lbrace \begin{matrix}
\rho_t + f(\rho)_x = 0 \\
\rho(s_i, \cdot) = \rho_n(s_i, \cdot) \mathbb{1}_{(a_i,b_i)}(\cdot).
\end{matrix} \right.$$
We are now in a position to apply the averaging compactness lemma (see Theorem 5.4.1 in \cite{Perthame2003-au}) on the trapezoid $\mathcal{T}_{a_0,b_0}^{s_0,t_0}$. We get a subsequence $(\rho_{n_k})_{k \in \mathbb{N}}$ that converges in $L^1(\mathcal{T}_{a_0,b_0}^{s_0,t_0})$.
We then apply the averaging compactness lemma with $(\rho_{n_k})_k$ on $\mathcal{T}_{a_1,b_1}^{s_1,t_1}$. Repeating this process for each $i \in I$, we recover a subsequence $(\rho_{n_j})_j$ that converges in $L^1(\bigcup_{i \in I} \mathcal{T}_{a_i,b_i}^{s_i,t_i})$. Then $(\rho_{n_j})_j$ converges in $L^1(K)$.\\
 To conclude, we point out that this reasoning holds for any $K \subset \left\{x > \xi(t)\right\}$. This is also true for compact subsets of $\left\{x < \xi(t)\right\}$. Since $\xi$ is Lipschitz, $\meas(\{x =\xi(t)\})=0$. Consequently there exists a subsequence $(\rho_{n_k})$ that converges almost everywhere on $(0,T)\times\mathbb{R}$ and in $L^1_{loc}((0,T)\times\mathbb{R})$.
 Moreover, we have $\rho_{n_k} \longrightarrow \rho$ in $L^1((0,T)\times\mathbb{R})$ because for $[a,b]\cap [-1,1]=\emptyset$, $\rho_n=0$ on $\mathcal{T}_{a,b}^{0,T}$ , due to the choice of $\rho_0$ verifying \eqref{eq:HypRho_0}.

 \smallskip
 	 Now, $\rho$ is actually $\mathcal{S}_0(\xi)$. Indeed,
 recall that $\rho$ has no admissibility condition to satisfy on $\{x=\xi(t)\}$ beyond the Rankine-Hugoniot relation. Then, we can pass to the limit in the entropy inequalities \eqref{eqSolEntropy} (where, for $n$ large enough, the support of the test function does not intersect the curve $\{x=\xi_n(t)\}$ for $t\in [0,T]$) and pass to the limit in \eqref{eqSolFaible} by dominated convergence.

 \smallskip
 This reasoning can be reproduced for any subsequence of $(\rho_n)_n$. Thanks to a classical argument of compacity, if any converging subsequence $(\mathcal{S}_0(\xi_{n_k}))_{k \in \mathbb{N}}$ converges to $\mathcal{S}_0(\xi)$, the whole sequence $(\mathcal{S}_0(\xi_n))_n$ converges in $L^1$ to $\mathcal{S}_0(\xi)$.
 So $\mathcal{S}_0: (W^{1,\infty}((0,T)),\|\cdot\|_{\infty}) \longrightarrow (L^1((0,T)\times\mathbb{R}),\|\cdot\|_{L^1((0,T)\times\mathbb{R})})$ is continuous.
\end{proof}

\smallskip

We now combine all the previous results to get existence of a solution in the sense of Definition \ref{defSolToProblem}.
\begin{proof}[Proof of Theorem \ref{thMainCorollary}]
Suppose there exists $r>0$ such that \eqref{seqLipStab}-\eqref{seqConditionBpossible} are verified.\\
Using the notations of Theorem \ref{thTopoResult} we take:

\smallskip
 \hspace*{10pt}$\bullet$\; $Y = (\mathcal{C}^0([0,T]), \|\cdot\|_{\infty})$\\
 \hspace*{10pt}$\bullet$\; $X = (L^1((0,T)\times \mathbb{R}), \|\cdot\|_{L^1((0,T)\times \mathbb{R})})$\\
  \hspace*{10pt}$\bullet$\; $K$ as the compact set of $\mathcal{C}^0([0,T])$ obtained as the image of $B_{W^{1,\infty}}(0,r)$ under the standard embedding.

\smallskip
Using Proposition \ref{thContinuityOfS} and Theorem \ref{thPreuveUnicite}, we know that $\mathcal{S}_0 : (K, \|\cdot\|_{Y}) \longrightarrow (X, \|\cdot\|_{X})$ is well defined and continuous.
Further, notice that condition \eqref{seqLipStab} is equivalent to \eqref{eqLipStab} and that condition \eqref{seqConditionBpossible} implies \eqref{seqWellDefDoI}.
We are now in a position to use Lemma \ref{thTopoResult}. We conclude to the existence of a solution to \eqref{eqConsideredModel} in the sense of Definition \ref{defSolToProblem}.
\end{proof}

\section{Lipschitz continuity of the turning curve: examples}\label{sec:applications}

In this section, we will enumerate examples of the abstract problem \eqref{eqConsideredModel}
\begin{subequations}
\begin{empheq}[left = \empheqlbrace]{align*}
\rho_t + \left[\sign(x-\xi(t))f(\rho)\right]_x &= 0 \\
\rho(0,x) &= \rho_0(x)  \\
\xi &=\mathcal{I}(\rho),
\end{empheq}
\end{subequations}
where we can construct a set $B$ such that the prescribed operator $\mathcal{I}$ satisfies the required properties in order to apply Theorem \ref{thMainCorollary}; this includes the original Hughes' model \eqref{eqElKhatibModel}  with affine costs and its modifications, taking into account time-inertia effects and allowing for general costs. Note that further examples, with modified exit conditions, are considered in Section~\ref{sec:constrained-exit}. For such examples, we exhibit the construction of this set. Consequently, we get existence of a solution in the sense of Definition \ref{defSolToProblem} in those situations.

\subsection{Hughes's model with affine cost}
We first consider the model \eqref{eqElKhatibModel}:
\begin{subequations}
\begin{empheq}[left = \empheqlbrace ]{align*}
\rho_t + \left[ \sign(x-\xi(t))\rho v(\rho)\right]_x  =  0 \\
\int_{-1}^{\xi(t)} c(\rho(t,x))dx  = \int_{\xi(t)}^1 c(\rho(t,x))dx,
\end{empheq}
\end{subequations}
with initial datum satisfying \eqref{eq:HypRho_0} where we choose, for some $\alpha>0$,
\begin{equation}\label{eq:affinecost}
	c(p) = 1 + \alpha p.
\end{equation}

First, let us recall the definition of the set $B_1$ constructed in the introduction:
\begin{equation}
  B_{1} = \biggl\lbrace
  \rho \in B_{L^1}(0,T \, \|\rho_0\|_{L^1}) \textrm{ s.t. } 0 \leq \rho \leq 1 \textrm{ and } \rho \textrm{ verifies \eqref{eqRhoSemiContinuity}} \biggr\rbrace. \tag{\ref{eqDefBSet}}
\end{equation}
In this setup, we have the following proposition:
\begin{proposition}\label{thCoutLineaireLS}
Assume the cost is given by \eqref{eq:affinecost}. Then the following properties hold:
  \begin{enumerate}
\item For any $\xi \in W^{1,\infty}((0,T))$, $\mathcal{S}_0 (\xi) \in B_1$.
 \item There exists $r >0$ such that, for any $\rho \in B_1$,
there exists a unique solution $\xi \in B_{W^{1,\infty}}(0,r)$ to \eqref{seqElKhatibEiko}.
We denote $\mathcal{I}_0$ the operator that maps $\rho \in B_{1}$ to $\xi$ the unique solution to \eqref{seqElKhatibEiko}. Consequently, this operator is well defined and monovaluated.
\item $\mathcal{I}_0 :  (B_1 , \|\cdot\|_{L^1((0,T) \times \mathbb{R})}) \longrightarrow
(W^{1,\infty}([0,T]), \|\cdot\|_{\infty})$ is continuous.
\item $B_1$ is closed convex and bounded in $L^1((0,T)\times \mathbb{R})$.
\end{enumerate}
\end{proposition}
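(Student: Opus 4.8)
The plan is to prove the four items separately: (1) and (4) I would dispatch quickly using Theorem~\ref{thPreuveUnicite} and the time‑regularity estimate \eqref{eqRhoSemiContinuity}, while (2) and (3) carry the real content and are where the affine form \eqref{eq:affinecost} of the cost matters.

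For (1), $\rho=\mathcal S_0(\xi)$ takes values in $[0,1]$ by Definition~\ref{DefSolSansGerme}; comparing $\rho$ with the admissible solution $\equiv 0$ issued from the zero datum through the localized $L^1$‑contraction \eqref{eqL1Contract} of Theorem~\ref{thPreuveUnicite} and letting $a\to-\infty$, $b\to+\infty$ gives $\|\rho(t,\cdot)\|_{L^1(\mathbb R)}\le\|\rho_0\|_{L^1(\mathbb R)}$ for every $t$, hence $\|\rho\|_{L^1((0,T)\times\mathbb R)}\le T\|\rho_0\|_{L^1}$ as in \eqref{seqBoundedRho}; and \eqref{eqRhoSemiContinuity} holds with the $\xi$‑independent constant $C$ of Lemma~\ref{thControlRhoT}. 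Thus $\rho\in B_1$. For (4), $B_1\subset B_{L^1}(0,T\|\rho_0\|_{L^1})$ is bounded; it is convex because the constraint $0\le\rho\le1$, the $L^1$‑ball, and each affine inequality $\bigl|\int_a^b(\rho(t,x)-\rho(s,x))\,dx\bigr|\le C|t-s|$ define convex sets; and it is closed because along an $L^1((0,T)\times\mathbb R)$‑convergent sequence in $B_1$ one extracts a subsequence converging a.e.\ and with $\rho_n(t,\cdot)\to\rho(t,\cdot)$ in $L^1(\mathbb R)$ for a.e.\ $t$, so that $0\le\rho\le1$, the norm bound, and \eqref{eqRhoSemiContinuity} survive the passage to the limit.

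The heart of the matter is (2), and I expect it to be the main obstacle. Fixing $\rho\in B_1$, for a.e.\ $t$ set $g_t(\eta):=\int_{-1}^\eta c(\rho(t,x))\,dx-\int_\eta^1 c(\rho(t,x))\,dx$. Since $c\circ\rho\ge1$, $g_t$ is absolutely continuous with $g_t'(\eta)=2\,c(\rho(t,\eta))\ge2$ for a.e.\ $\eta$, and $g_t(-1)<0<g_t(1)$; hence, by the intermediate value theorem, there is a unique $\xi(t)\in(-1,1)$ with $g_t(\xi(t))=0$, so $\|\xi\|_\infty\le1$ and $\mathcal I_0$ is single‑valued. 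From $g_t'\ge2$ one gets $|g_t(\eta)-g_t(\xi(t))|\ge2|\eta-\xi(t)|$, so for $s,t$
\[
2\,|\xi(t)-\xi(s)|\le|g_t(\xi(s))|=|g_t(\xi(s))-g_s(\xi(s))|.
\]
Because $c(p)=1+\alpha p$, one has $g_t(\eta)-g_s(\eta)=\alpha\int_{-1}^\eta(\rho(t,x)-\rho(s,x))\,dx-\alpha\int_\eta^1(\rho(t,x)-\rho(s,x))\,dx$, and applying \eqref{eqRhoSemiContinuity} to each of the two signed integrals gives $|g_t(\eta)-g_s(\eta)|\le2\alpha C|t-s|$; therefore $|\xi(t)-\xi(s)|\le\alpha C|t-s|$ and $\xi=\mathcal I_0(\rho)\in B_{W^{1,\infty}}(0,r)$ with $r:=1+\alpha C$ independent of $\rho\in B_1$. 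This is the one place where affineness is decisive: it is what lets the time‑modulus of $\eta\mapsto\int_a^b c(\rho(t,x))\,dx$ be read off from \eqref{eqRhoSemiContinuity}; for a general cost satisfying \eqref{eqCostConditions} one would instead have to control $\int_a^b(c(\rho(t,x))-c(\rho(s,x)))\,dx$, which \eqref{eqRhoSemiContinuity} does not furnish, and this is precisely what confines the statement to affine costs.

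For (3), let $\rho_n\to\rho$ in $L^1((0,T)\times\mathbb R)$ with $\rho_n,\rho\in B_1$, and $\xi_n:=\mathcal I_0(\rho_n)$, $\xi:=\mathcal I_0(\rho)$. By (2) the $\xi_n$ are equi‑Lipschitz and uniformly bounded, hence relatively compact in $\mathcal C^0([0,T])$ by Arzel\`a--Ascoli, so it suffices to show that every uniform limit along a subsequence equals $\xi$. Along such a subsequence I would extract further so that, in addition, $\rho_n(t,\cdot)\to\rho(t,\cdot)$ in $L^1(\mathbb R)$ for a.e.\ $t$ and $\xi_n\to\zeta$ uniformly; then for a.e.\ $t$,
\[
\Bigl|\int_{-1}^{\xi_n(t)}c(\rho_n(t,x))\,dx-\int_{-1}^{\zeta(t)}c(\rho(t,x))\,dx\Bigr|\le\alpha\,\|\rho_n(t,\cdot)-\rho(t,\cdot)\|_{L^1(\mathbb R)}+(1+\alpha)\,|\xi_n(t)-\zeta(t)|\longrightarrow0,
\]
and likewise on $[\zeta(t),1]$; passing to the limit in \eqref{seqElKhatibEiko} written for $\rho_n$ shows that $\zeta(t)$ solves \eqref{seqElKhatibEiko} for $\rho(t,\cdot)$, so by the uniqueness in (2) we get $\zeta(t)=\xi(t)$ a.e.\ and $\zeta=\xi$ by continuity. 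Hence $\xi_n\to\xi$ in $\mathcal C^0([0,T])$, i.e.\ $\mathcal I_0:(B_1,\|\cdot\|_{L^1((0,T)\times\mathbb R)})\to(W^{1,\infty}([0,T]),\|\cdot\|_\infty)$ is continuous. The steps beyond (2) are soft analysis.
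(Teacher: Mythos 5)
Your proposal is correct, and on items (1), (2) and (4) it follows essentially the paper's own route: the $L^1$ bound comes from \eqref{eqL1Contract} with $\hat\rho\equiv 0$, uniqueness of $\xi(t)$ comes from strict monotonicity of $\eta\mapsto\int_{-1}^{\eta}c(\rho(t,x))\d x-\int_{\eta}^{1}c(\rho(t,x))\d x$, and your Lipschitz bound $|\xi(t)-\xi(s)|\le\alpha C|t-s|$ obtained from $g_t'\ge 2$ is exactly the content of Lemma~\ref{thLipEstimate} combined with Lemma~\ref{thControlRhoT}; your diagnosis that affineness is what lets \eqref{eqRhoSemiContinuity} control the time-modulus of the cost integrals matches the paper's own discussion of why the general-cost case is open. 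The one genuine divergence is item (3): the paper subtracts the two integral identities for $\xi$ and $\xi_n$ to obtain the quantitative estimate $2\bigl|\int_{\xi_n(t)}^{\xi(t)}c(\rho)\bigr|\le\int_{-1}^{1}|c(\rho)-c(\rho_n)|$, hence $\|\xi-\xi_n\|_{L^1((0,T))}\le\tfrac{\alpha}{2}\|\rho-\rho_n\|_{L^1}$, and then converts $L^1$ convergence of $\xi_n$ into uniform convergence using the common Lipschitz constant; you instead apply Arzel\`a--Ascoli to the equi-Lipschitz family $\{\xi_n\}$ and identify every uniform cluster point with $\xi$ by passing to the limit in \eqref{seqElKhatibEiko} along a further subsequence with $\rho_n(t,\cdot)\to\rho(t,\cdot)$ in $L^1(\mathbb R)$ for a.e.\ $t$, invoking the pointwise uniqueness from (2). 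Both arguments are valid; the paper's buys an explicit Lipschitz stability estimate for $\mathcal I_0$ in the $L^1$-to-$L^1$ sense, while yours is softer and uses nothing beyond what (2) already provides. A minor point in your favour: since $B_{W^{1,\infty}}(0,r)$ controls $\|\dot\xi\|_\infty+\|\xi\|_\infty$, your choice $r=1+\alpha C$ is the careful one, whereas the paper's $r=\alpha C$ tacitly omits the contribution of $\|\xi\|_\infty\le 1$.
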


Consequently, $\mathcal{I}_0$
verifies \eqref{seqLipStab}-\eqref{seqConditionBpossible} for the set $B_1$.
We apply Theorem \ref{thMainCorollary} and get the desired existence of a solution for the problem \eqref{eqElKhatibModel} with affine cost \eqref{eq:affinecost}. That proves Proposition \ref{thLinResult}.

\smallskip
In order to prove of Proposition \ref{thCoutLineaireLS}, we rely on two lemmas that we chose to isolate in order to use them in the other examples.
\begin{lemma}\label{thControlRhoT}
  Let $a, b \in \mathbb{R}$, $a<b$. Let $s,t \in [0,T]$, $s<t$. Fix $\xi \in W^{1,\infty}((0,T))$.
  We denote $\rho$ a solution in the sense of Definition \ref{DefSolSansGerme}.
  Then, there exists $C > 0$, independent of $a,b,s,t, \xi$ and $\rho$, such that:
  \begin{equation}\label{eqControlRhoT}
    \left| \int_a^b \rho(t,x) - \rho(s,x) \d x \right| \leq C |t-s|.
  \end{equation}
  We recall that there's no ambiguity in considering $\rho(t,.)$ since $\rho \in \mathcal{C}^0([0,T],L^1(\mathbb{R}))$ (see Remark \ref{remCancesContinuity}).
\end{lemma}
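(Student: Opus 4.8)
The plan is to derive the estimate \eqref{eqControlRhoT} directly from the weak formulation \eqref{eqSolFaible}, exploiting only that $\rho$ takes values in $[0,1]$ and that the flux $F(t,x,\rho)=\sign(x-\xi(t))f(\rho)$ is bounded by $\|f\|_{L^\infty([0,1])}=:M$ uniformly in $(t,x)$. The quantity $t\mapsto\int_a^b\rho(t,x)\,\d x$ is, thanks to Remark \ref{remCancesContinuity}, a continuous function of $t$, so it suffices to control its variation. First I would approximate the characteristic function $\mathbb{1}_{(a,b)}(x)\mathbb{1}_{(s,t)}(\tau)$ by a smooth test function $\phi\in\mathcal{C}^\infty_c((0,T)\times\mathbb{R})$: take $\phi(\tau,x)=\theta_\varepsilon(\tau)\chi_\eta(x)$ where $\theta_\varepsilon$ is a smooth approximation of $\mathbb{1}_{(s,t)}$ (a plateau equal to $1$ on $[s,t]$ with transition layers of width $\varepsilon$) and $\chi_\eta$ a smooth approximation of $\mathbb{1}_{(a,b)}$ (plateau equal to $1$ on $[a,b]$ with transition layers of width $\eta$, so $\|\chi_\eta'\|_{L^1(\mathbb{R})}\le 2$). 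Plugging this into \eqref{eqSolFaible} gives
\[
\iint \rho\,\theta_\varepsilon'(\tau)\chi_\eta(x)\,\d\tau\,\d x \;=\; -\iint F(\tau,x,\rho)\,\theta_\varepsilon(\tau)\chi_\eta'(x)\,\d\tau\,\d x .
\]

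Next I would pass to the limit $\varepsilon\to 0$. The left-hand side converges, using $\rho\in\mathcal{C}^0([0,T],L^1(\mathbb{R}))$, to $-\bigl(\int_a^b\rho(t,x)\chi_\eta(x)\,\d x-\int_a^b\rho(s,x)\chi_\eta(x)\,\d x\bigr)$ — more precisely to $\int_\mathbb{R}\rho(s,x)\chi_\eta(x)\,\d x-\int_\mathbb{R}\rho(t,x)\chi_\eta(x)\,\d x$, since $\theta_\varepsilon'$ is an approximate identity (of mass $+1$ near $s$ and $-1$ near $t$). The right-hand side, after this limit, becomes $-\int_s^t\!\!\int_\mathbb{R} F(\tau,x,\rho)\chi_\eta'(x)\,\d x\,\d\tau$, whose absolute value is bounded by $M\,|t-s|\,\|\chi_\eta'\|_{L^1(\mathbb{R})}\le 2M\,|t-s|$, uniformly in $\eta$ and crucially independent of $\xi$, $a$, $b$ because $|F|\le M$ regardless of the position of the interface $\xi(\tau)$. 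Finally letting $\eta\to 0$ upgrades $\chi_\eta$ to $\mathbb{1}_{(a,b)}$ on both sides (dominated convergence, using $\rho(t,\cdot),\rho(s,\cdot)\in L^1$), yielding
\[
\left| \int_a^b \rho(t,x)\,\d x - \int_a^b \rho(s,x)\,\d x \right| \le 2M\,|t-s|,
\]
so the constant $C=2\|f\|_{L^\infty([0,1])}$ works. For the endpoint cases $s=0$ or $t=T$ one uses the initial condition in \eqref{eqSolFaible} (or simply the one-sided continuity at $0$ and $T$).

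The argument has no genuine obstacle; the only point requiring a little care is the interchange of limits, namely checking that the boundary contributions at $\tau=s$ and $\tau=t$ produced by $\theta_\varepsilon'$ genuinely converge to the pointwise-in-time integrals $\int_a^b\rho(s,x)\,\d x$ and $\int_a^b\rho(t,x)\,\d x$ — this is exactly where the $\mathcal{C}^0([0,T],L^1(\mathbb{R}))$ regularity of Remark \ref{remCancesContinuity} is invoked, and it is what makes the smoothing in $\tau$ legitimate rather than circular. One should also note that the estimate is first obtained for a.e.\ pair $(s,t)$ and then holds for all $(s,t)$ by the continuity of $t\mapsto\int_a^b\rho(t,x)\,\d x$.
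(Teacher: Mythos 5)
Your proposal is correct and follows essentially the same route as the paper: test the weak formulation \eqref{eqSolFaible} with a mollified indicator of the rectangle $[s,t]\times[a,b]$, identify the time-boundary terms via the $\mathcal{C}^0([0,T],L^1(\mathbb{R}))$ regularity, and bound the flux contribution by $2\sup_{p\in[0,1]}|f(p)|\,|t-s|$, which is exactly the constant the paper obtains. Your variant of keeping the spatial transition layers and using $\|\chi_\eta'\|_{L^1}\le 2$ rather than writing pointwise flux traces at $x=a,b$ is a slightly more careful rendering of the same computation.
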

\begin{proof}[Proof of Lemma \ref{thControlRhoT}]
Let $(\kappa_n)_{n \in \mathbb{N}}$ be a mollifier. We set
\[ \Psi(\tau,x) := \mathbb{1}_{[a,b]}(x) \mathbb{1}_{[s,t]}(\tau) \, \textrm{ and }\, \phi(\tau,x) := \Psi \ast \kappa_n (\tau,x). \]
Using $\phi$ as test function in \eqref{eqSolFaible}, making $n \longrightarrow + \infty$ we get:
\[ \int_a^b \rho(s,x) - \rho(t,x) \d x + \int_s^t F(\tau,a,\rho(\tau,a)) - F(\tau,b,\rho(\tau,b)) \d \tau = 0 \]
Consequently,
\begin{align*}
  \left| \int_a^b \rho(t,x) - \rho(s,x) \d x \right|  \leq \left| \int_s^t F(\tau,a,\rho(\tau,a)) - F(\tau,b,\rho(\tau,b)) \d \tau \right|
   \leq \left( 2 \sup_{p \in [0,1]}|f(p)|\right) \left| t- s \right|
\end{align*}
\end{proof}

\begin{lemma}\label{thLipEstimate}
  Let $s < t \in [0,T]$. Let $\xi$ be a solution to \eqref{seqElKhatibEiko}.
  We denote $\ubar{\xi} := \min(\xi(t),\xi(s))$ and $\bar{\xi} := \max(\xi(t),\xi(s))$. Then
  \begin{equation}\label{eqLipEstimate}
2 \left| \xi(t) - \xi(s) \right| \leq \left| \int_{-1}^{\ubar{\xi}} c(\rho(t,x)) - c(\rho(s,x)) \d x - \int_{\bar{\xi}}^{1} c(\rho(t,x)) - c(\rho(s,x)) \d x \right|
  \end{equation}
\end{lemma}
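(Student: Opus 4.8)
The plan is to use the defining equation \eqref{seqElKhatibEiko} for $\xi$ at the two times $s$ and $t$, and to split the domain $[-1,1]$ into the three pieces $[-1,\ubar\xi]$, $[\ubar\xi,\bar\xi]$ and $[\bar\xi,1]$, which behave differently: on the first and third pieces the position of the turning curve relative to $x$ is the same at both times, while the middle piece $[\ubar\xi,\bar\xi]$ is exactly where the ambiguity lies. First I would write \eqref{seqElKhatibEiko} as $\int_{-1}^{\xi(\tau)} c(\rho(\tau,x))\,\d x = \int_{\xi(\tau)}^{1} c(\rho(\tau,x))\,\d x$ for $\tau\in\{s,t\}$, or equivalently (adding $\int_{-1}^1 c(\rho(\tau,x))\,\d x$ to both sides and dividing) as $\int_{-1}^{\xi(\tau)} c(\rho(\tau,x))\,\d x = \tfrac12\int_{-1}^{1} c(\rho(\tau,x))\,\d x$. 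Subtracting the $s$-equation from the $t$-equation then gives an identity of the form
\begin{equation*}
\int_{-1}^{\xi(t)} c(\rho(t,x))\,\d x - \int_{-1}^{\xi(s)} c(\rho(s,x))\,\d x = \frac12\left(\int_{-1}^{1} c(\rho(t,x))\,\d x - \int_{-1}^{1} c(\rho(s,x))\,\d x\right).
\end{equation*}

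The next step is to rewrite the left-hand side by separating the change in the integrand from the change in the upper limit. Writing $\int_{-1}^{\xi(t)} c(\rho(t,\cdot)) - \int_{-1}^{\xi(s)} c(\rho(s,\cdot)) = \int_{-1}^{\ubar\xi}\bigl(c(\rho(t,x))-c(\rho(s,x))\bigr)\,\d x \pm \int_{\ubar\xi}^{\bar\xi}(\dots) + \sigma\int_{\ubar\xi}^{\bar\xi} c(\rho(\cdot,x))\,\d x$, where $\sigma=\sign(\xi(t)-\xi(s))\in\{-1,+1\}$ records which of $\xi(s),\xi(t)$ is larger, I can isolate the term $\sigma\int_{\ubar\xi}^{\bar\xi} c(\rho(\theta,x))\,\d x$ (with $\theta$ the later of the two times, coming from the genuine displacement of the curve). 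Combining this with the right-hand side, which splits over $[-1,\ubar\xi]$, $[\ubar\xi,\bar\xi]$, $[\bar\xi,1]$ as well, the integrals over $[-1,\ubar\xi]$ partially cancel, and after collecting terms one is left with an identity asserting that $\int_{\ubar\xi}^{\bar\xi} c(\rho(\theta,x))\,\d x$ equals (up to sign) $\tfrac12$ times a combination of the "integrand-difference" integrals over $[-1,\ubar\xi]$ and $[\bar\xi,1]$ plus a contribution supported on $[\ubar\xi,\bar\xi]$. Since $c(\rho)\ge 1$ pointwise by \eqref{eqCostConditions}, the left side is bounded below by $\bar\xi-\ubar\xi = |\xi(t)-\xi(s)|$, and the extra $[\ubar\xi,\bar\xi]$-contribution on the right, being of the form $\int_{\ubar\xi}^{\bar\xi}\bigl(c(\rho(t,x))-c(\rho(s,x))\bigr)\,\d x$, is controlled in absolute value by $(\sup c - 1)\,|\xi(t)-\xi(s)|$ — but in fact the bookkeeping is arranged precisely so that these middle terms combine to leave the clean factor $2$ on the left and only the two "outer" integrals on the right, which is the asserted inequality \eqref{eqLipEstimate}.

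Concretely, once the algebra is organized, the inequality to establish reads
\begin{equation*}
2\,(\bar\xi - \ubar\xi) \le \int_{\ubar\xi}^{\bar\xi} \bigl(c(\rho(t,x)) + c(\rho(s,x))\bigr)\,\d x = \left| \int_{-1}^{\ubar{\xi}} \!\bigl(c(\rho(t,x)) - c(\rho(s,x))\bigr) \d x - \int_{\bar{\xi}}^{1} \!\bigl(c(\rho(t,x)) - c(\rho(s,x))\bigr) \d x \right|,
\end{equation*}
where the first bound is immediate from $c\ge 1$, and the equality of the middle quantity with the outer-integral expression is exactly what the subtraction of the two equilibrium relations yields after the cancellations described above. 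So the proof reduces to: (i) write the equilibrium relation at $s$ and at $t$ in the symmetric form; (ii) subtract and split all integrals across $-1 < \ubar\xi < \bar\xi < 1$; (iii) cancel the common $[-1,\ubar\xi]$ (and symmetrically, via the other form of the relation, $[\bar\xi,1]$) pieces; (iv) bound the surviving $[\ubar\xi,\bar\xi]$ integral below using $c\ge 1$.

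The main obstacle is purely the bookkeeping of signs: one must handle the two cases $\xi(s)\le\xi(t)$ and $\xi(t)\le\xi(s)$ (encoded by $\sigma$) and be careful about whether $\rho(t,\cdot)$ or $\rho(s,\cdot)$ appears in the genuine displacement term, since the turning curve has moved across the strip $[\ubar\xi,\bar\xi]$ between the two times. Using the absolute value on the right of \eqref{eqLipEstimate} is what lets both cases be written uniformly, so no real difficulty remains beyond careful algebra; there is no analytic subtlety here, only the need to track which region contributes with which sign. Note that this lemma does not yet use any regularity or the bound \eqref{eqControlRhoT} on $\rho$ in time — that is applied afterwards, by combining \eqref{eqLipEstimate} with Lemma \ref{thControlRhoT} applied on $[-1,\ubar\xi]$ and on $[\bar\xi,1]$ together with the Lipschitz continuity of $c$, to deduce the desired uniform Lipschitz bound on $\xi$.
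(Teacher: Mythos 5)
Your proposal is correct and follows essentially the same route as the paper: subtract the equilibrium relation \eqref{seqElKhatibEiko} at times $s$ and $t$, split the integrals across $\ubar{\xi}$ and $\bar{\xi}$ so that the surviving middle term is $\int_{\ubar{\xi}}^{\bar{\xi}} \bigl(c(\rho(t,x))+c(\rho(s,x))\bigr)\d x$, and bound it below by $2|\xi(t)-\xi(s)|$ using $c\ge 1$. The only cosmetic difference is your use of the symmetric form $\int_{-1}^{\xi(\tau)} c = \tfrac12\int_{-1}^{1} c$, whereas the paper manipulates the two one-sided identities directly and handles the two sign cases explicitly.
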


\begin{proof}[Proof of Lemma \ref{thLipEstimate}]
We first treat the case $\xi(s) \leq \xi(t)$.\\
We have:
\begin{align*}
\int_{-1}^{\xi(s)} c(\rho(s,x)) \d x = \int_{\xi(s)}^{\xi(t)} c(\rho(s,x)) \d x + \int_{\xi(t)}^1 c(\rho(s,x)) \d x \\
\int_{-1}^{\xi(s)} c(\rho(t,x)) \d x = - \int_{\xi(s)}^{\xi(t)} c(\rho(t,x)) \d x + \int_{\xi(t)}^1 c(\rho(t,x)) \d x
\end{align*}
If we substract both equalities,
\[ \int_{\xi(s)}^{\xi(t)} c(\rho(s,x)) + c(\rho(t,x)) \d x =
\int_{-1}^{\xi(s)} c(\rho(s,x)) - c(\rho(t,x)) \d x - \int_{\xi(t)}^{1} c(\rho(s,x)) - c(\rho(t,x)) \d x \]
On the contrary, if $\xi(s) \geq \xi(t)$, with an analogous argument we get:
\[ \int_{\xi(t)}^{\xi(s)} c(\rho(s,x)) + c(\rho(t,x)) \d x =
\int_{-1}^{\xi(t)} c(\rho(t,x)) - c(\rho(s,x)) \d x - \int_{\xi(s)}^{1} c(\rho(t,x)) - c(\rho(s,x)) \d x \]
Using the fact that $c \geq 1$ we get:
\[ 2|\xi(t)-\xi(s)| = 2 (\bar{\xi} - \ubar{\xi}) \]
\[\leq
\int_{\ubar{\xi}}^{\bar{\xi}} c(\rho(s,x)) + c(\rho(t,x)) \d x \leq \left|
\int_{-1}^{\ubar{\xi}} c(\rho(s,x)) - c(\rho(t,x)) \d x - \int_{\bar{\xi}}^{1} c(\rho(s,x)) - c(\rho(t,x)) \d x \right| \]
\end{proof}

We are now ready to prove Proposition \ref{thCoutLineaireLS}.
 \begin{proof}[Proof of Proposition \ref{thCoutLineaireLS}]
First, consider $\rho_0$ satisfying \eqref{eq:HypRho_0}. Using $\hat{\rho} = 0$ in \eqref{eqL1Contract},
we prove that for all $t$ in $ [0,T]$, $\|\rho(t,\cdot) \|_{L^1(\mathbb{R})} \leq \|\rho_0\|_{L^1(\mathbb{R})}$. This readily yields:
\begin{align}
  \|\rho\|_{L^1([0,T] \times \mathbb{R})} \leq T \|\rho_0 \|_{L^1(\mathbb{R})} \tag{\ref{seqBoundedRho}}.
\end{align}
Combining this result with Lemma \ref{thControlRhoT}, we prove the first assertion of Proposition \ref{thCoutLineaireLS}.

\smallskip
Second, fix $\rho \in B_1$. We prove existence and uniqueness of $\xi \in L^{\infty}([0,T])$ satisfying \eqref{seqElKhatibEiko} for any $t\in [0,T]$.\\
Let $t \in [0,T]$, we set:
\begin{align*}
  \Psi^+(a) := \int_{-1}^a c(\rho(t,x)) \d x, \,\,
  \Psi^-(a) := \int_{a}^1 c(\rho(t,x)) \d x.
\end{align*}
One can notice that, because $c > 0$, $\Psi^+$ is a continuous strictly increasing function, while $\Psi^-$ is continuous and strictly decreasing on $[-1,1]$. Therefore, $a \mapsto \Psi^+(a) - \Psi^-(a)$ is continuous, strictly increasing, negative at $a=-1$ and positive at $a=1$. Consequently, there exists only one $\tilde{a} \in (-1,1)$ such that $\Psi^+(\tilde{a} ) = \Psi^-(\tilde{a} )$.
This can be done for any $t \in [0,T]$. Consequently, we get existence and unicity of $\xi \in L^{\infty}$.

\smallskip
We now prove that $\xi \in W^{1,\infty}([0,T])$. Using Lemma \ref{thLipEstimate} we get:
\begin{align*}
  2 \left| \xi(t) - \xi(s) \right| &\leq \left| \int_{-1}^{\ubar{\xi}} c(\rho(t,x)) - c(\rho(s,x)) \d x - \int_{\bar{\xi}}^{1} c(\rho(t,x)) - c(\rho(s,x)) \d x \right|\\
  &\leq \alpha \left| \int_{-1}^{\ubar{\xi}} \rho(t,x) - \rho(s,x) \d x \right| + \alpha \left| \int_{\bar{\xi}}^{1} \rho(t,x) - \rho(s,x) \d x \right|
\end{align*}
And using Lemma \ref{thControlRhoT}, with the choice 
\eqref{eq:affinecost} of the cost, we get:
\[ 2 \left| \xi(t) - \xi(s) \right|
\leq 2 \alpha C \left| t-s \right| \]
We conclude that taking $r = \alpha C$, one guarantees that $\xi$ is always in $B_{W^{1,\infty}}(0,r)$.

\smallskip
 We now prove the continuity of the operator $\mathcal{I}_0$.
Let's consider $\rho$, $\rho_n \in B_1$. Then, for a given $t \in [0,T]$, using \eqref{seqElKhatibEiko} for both $\xi := \mathcal{I}_0(\rho)$ and $\xi_n := \mathcal{I}_0(\rho_n)$, we recover:
$$\int_{\xi_n(t)}^{\xi(t)} c(\rho) + \int_{-1}^{\xi_n(t)} c(\rho) - \int_{-1}^{\xi_n(t)} c(\rho_n)
= \int_{\xi(t)}^{\xi_n(t)} c(\rho) + \int_{\xi_n(t)}^1 c(\rho) - \int_{\xi_n(t)}^{1}c(\rho_n)$$
And rearranging the integrals, we get:
$$ 2 \int_{\xi_n(t)}^{\xi(t)}c(\rho) = \int_{-1}^{1}\left[ c(\rho)-c(\rho_n) \right] \sign(x - \xi_n(t)).
$$ \label{eqCalculusOfPsi}
Notice that
\begin{align*}
  \int_0^T |\xi - \xi_n|
  \begin{multlined}[t]
   \leq \int_0^T \left| \int_{\xi(t)}^{\xi_n(t)} c(\rho) \right|
    \leq \frac{1}{2} \int_0^T \left| \int_{-1}^{1} \sign(x - \xi_n(t))  \left[c(\rho)-c(\rho_n)\right] \right| \\
    \leq \frac{1}{2} \int_0^T \int_{-1}^{1}  \left|c(\rho)-c(\rho_n) \right|
    \leq \frac{\alpha}{2} \int_0^T \int_{-1}^{1}  \left|\rho - \rho_n \right|.
  \end{multlined}
\end{align*}
Consequently, if $\| \rho - \rho_n \|_{L^1((0,T)\times \mathbb{R})} \longrightarrow 0$,
$$ \|\xi - \xi_n\|_{L^1((0,T))} \longrightarrow 0.$$
We recall, that $\xi, \xi_n \in \mathcal{I}_0(B_1)$ are $r$-Lipschitz.
On any open subset of $[0,T]$ there exists a point $t$ where the continuous function $\xi(\cdot) - \xi_n(\cdot)$ is less or egal to its $L^1$-average. Using the fact that $[0,T]$ can be covered by a finite $\epsilon$-network and that the derivative of $\xi(\cdot) - \xi_n(\cdot)$ is bounded on this network,
we recover that $\|\xi-\xi_n\|_{\infty} \longrightarrow 0$ when $\| \rho - \rho_n \|_{L^1((0,T)\times \mathbb{R})} \longrightarrow 0$.
This proves the third point of Proposition \ref{thCoutLineaireLS}.

\smallskip
Eventually, let $\rho_1, \rho_2 \in B_1$, $\lambda \in [0,1]$; it is readily checked that $\lambda \rho_1 + (1-\lambda)\rho_2$ still satisfies \eqref{eqControlRhoT}. Then $B_1$ is convex.
It is also readily checked that we can pass to the $L^1((0,T)\times \mathbb{R})$ limit in \eqref{eqControlRhoT}, proving that $B_1$ is closed.
By construction $B_1$ is bounded. That ends the proof of Proposition \ref{thCoutLineaireLS}.
 \end{proof}

\subsection{The general cost case evaluated for a subjective density}

In the same setup \eqref{eqElKhatibModel}, let's further prospect the situation for a cost function $c$ verifying \eqref{eqCostConditions}.
Most of the items of Proposition \ref{thCoutLineaireLS} hold with the set $B_1$. The first point is independent of the nature of $c$. The third point proof still holds with general cost if the second point holds.
Proof of existence and unicity of $\xi \in L^{\infty}((0,T))$ is still valid. In fact, the main issue lies in proving that $\xi$ is Lipschitz for any $\rho$ in a given set $B$. \\
In order to explore this issue, let's start from Lemma \ref{thLipEstimate} estimate \eqref{eqLipEstimate}:
$$2 \left| \xi(t) - \xi(s) \right| \leq \left| \int_{-1}^{\ubar{\xi}} c(\rho(t,x)) - c(\rho(s,x)) \d x - \int_{\bar{\xi}}^{1} c(\rho(t,x)) - c(\rho(s,x)) \d x \right|$$
Recall that $c$ satisfies \eqref{eqCostConditions}.
We set $\bar{\alpha} := \essup_{u \in [0,1]} c'(u)$,
$\ubar{\alpha} := \essinf_{u \in [0,1]} c'(u) > 0$.
Using the negative and positive parts of $(\rho(t,\cdot)-\rho(s,\cdot))$, rearranging the terms we get the following estimate:
\begin{align}
  2 \left| \xi(t) - \xi(s) \right| \leq &\left( \frac{\bar{\alpha}+\ubar{\alpha}}{2} \right) \left| \int_{-1}^{\ubar{\xi}} \rho(t,x) - \rho(s,x) \d x
  - \int_{\bar{\xi}}^{1} \rho(t,x) - \rho(s,x) \d x \right|\notag \\
  &+ \left( \frac{\bar{\alpha}-\ubar{\alpha}}{2} \right) \int_{-1}^{1} \left| \rho(t,x) - \rho(s,x) \right| \d x =: I_1 + I_2 \label{eqL1ContinuityCalculation}
\end{align}
The first term $I_1$ of the right member is controlled by the estimate of Lemma \ref{thControlRhoT}. The issue lies in controlling the second term $I_2$.
This suggests that, in order to prove that $\xi \in W^{1,\infty}((0,T))$ we need an estimate of the modulus of continuity of $\rho$ as an element of $\mathcal{C}^0([0,T], L^1(\mathbb{R}))$. While the standard Oleinik regularizing effect can be used locally away from the turning curve (see \cite{andreianov2021}), in a vicinity of the turning curve the spatial variation of $\rho$ may not be controlled; moreover, (ir)regularity of the turning curve itself impacts the modulus of continuity of $\rho$, making it an open question how to control time variations of $\rho$. We leave this issue for future research. \\
However, we can treat a natural modification of problem \eqref{eqElKhatibModel} for which the method applied for the affine cost \eqref{eq:affinecost} extends to general costs.   Let $\mathcal{R}: L^1((-\infty,T)) \longrightarrow L^1((0,T))$ be the operator defined by:
\begin{equation}\label{eq:mollifierPhi}
\mathcal{R}[\rho(\cdot,x)](t) :=
\delta \int_{-\infty}^t \rho(s,x) e^{-\delta (t-s)} \d s
\end{equation}
To make this operator well defined, we extend $\rho$ by $\rho(t) = \rho_0$ for any $t \in [-\infty,0]$. \textcolor{black}{This model corresponds to a memory effect in individual's perception of the density; $\mathcal{R}[\rho]$ is a \textit{subjective density} perceived by an agent making decision to move towards the most appropriate exit. Thus,} we consider the problem:
\begin{subequations}\label{eqMemoryElkhatib}
\begin{empheq}[left = \empheqlbrace ]{align}
\rho_t + \left[ \sign(x-\xi(t))\rho v(\rho)\right]_x  =  0 \tag{\ref{seqCLElKhatib}}\\
\int_{-1}^{\xi(t)} c(\mathcal{R}[\rho(\cdot,x)](t))dx  = \int_{\xi(t)}^1 c(\mathcal{R}[\rho(\cdot,x)](t))dx,   \tag{\ref{seqElKhatibEiko}'} \label{seqModifiedElKhatib}
\end{empheq}
\end{subequations}
with $c$ verifying \eqref{eqCostConditions}, and with initial datum satisfying \eqref{eq:HypRho_0}.

\smallskip
Equation \eqref{seqModifiedElKhatib} takes into account the average density over the recent past instead of the instantaneous density at a time $t$.
This models the bias, due to some inertia of human thinking, towards perception of the density for the pedestrians in the corridor; the quantity $\mathcal{R}[\rho(\cdot,x)]$ can be compared to other ``subjective densities'' used in the literature (cf. \cite{Carillo2016}, \cite{Andreianov2014CDA,Donadello2015}).
With the same calculations as \eqref{eqL1ContinuityCalculation}, we recover the term
$$I_2 = \int_{-1}^1 \Bigl| \mathcal{R}[\rho(\cdot,x)](t) - \mathcal{R}[\rho(\cdot,x)](s) \Bigr| \d x,$$
which is controlled by $2\delta \|\rho\|_{L^{\infty}} |t-s|$, a bound for the modulus of continuity of $\mathcal{R}[\rho(\cdot,x)]$.
For $I_1$ we can pass the absolute value inside the integral. Then $I_1$ is also controlled by the modulus of continuity of $\mathcal{R}[\rho(\cdot,x)]$.
Notice that we don't need the property \eqref{eqRhoSemiContinuity} for this reasoning. Consequently, we define:
\begin{equation}\label{eqBsetInertia}
  B_{2} = \left\lbrace \rho \in B_{L^1}(0,T \, \|\rho_0\|_{L^1}) \textrm{ s.t. } 0  \leq \rho \leq 1 \right\rbrace.
\end{equation}
Then,
$\mathcal{I}_\delta : (B_2, \|\cdot\|_{L^1((0,T)\times \mathbb{R})}) \longrightarrow (W^{1,\infty}((0,T)), \|\cdot\|_{\infty})$, $\rho\mapsto \xi$ where $\xi$ is defined by \eqref{seqModifiedElKhatib} with $\mathcal{R}$ given by \eqref{eq:mollifierPhi}, is well defined.
The analogue of Proposition \ref{thCoutLineaireLS} - where we use ${\mathcal I}_\delta$ instead of ${\mathcal I}_0$, we use $B_2$ instead of $B_1$ and we drop the assumption of affine cost - is easily justified. In particular,
the proof for the third item of this analogue of Proposition \ref{thCoutLineaireLS} holds with these choices. Thus, without the restriction \eqref{eq:affinecost} on the cost, we have the following claim:
\begin{proposition}
  Let $\rho_0 $ satisfy \eqref{eq:HypRho_0}. Let $c$ verifying \eqref{eqCostConditions}.
  Then problem \eqref{seqTrueCL}-\eqref{seqTrueCI}-\eqref{seqModifiedElKhatib} admits at least one solution.
\end{proposition}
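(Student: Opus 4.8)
The plan is to apply Theorem~\ref{thMainCorollary} with $B=B_2$ (given by \eqref{eqBsetInertia}) and $\mathcal{I}=\mathcal{I}_\delta$; four things then have to be checked: that $B_2$ is convex, closed and bounded in $L^1((0,T)\times\mathbb{R})$; that $\mathcal{S}_0(\xi)\in B_2$ for every $\xi$ (condition \eqref{seqConditionBpossible}); that $\mathcal{I}_\delta(B_2)\subset B_{W^{1,\infty}}(0,r)$ for some $r>0$ (condition \eqref{seqLipStab}); and that $\mathcal{I}_\delta:(B_2,\|\cdot\|_{L^1})\to(\mathcal{C}^0([0,T]),\|\cdot\|_\infty)$ is continuous. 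The non-degeneracy \eqref{NonDegen} required by Theorem~\ref{thMainCorollary} holds for standard velocity profiles such as $v(\rho)=1-\rho$, so it is not an additional restriction.

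The first two points are transcriptions of the affine-cost analysis. $B_2$ is an intersection of closed convex bounded subsets of $L^1$, hence closed convex bounded. For \eqref{seqConditionBpossible}: taking $\hat\rho\equiv0$ in \eqref{eqL1Contract} gives $\|\mathcal{S}_0(\xi)(t,\cdot)\|_{L^1(\mathbb{R})}\le\|\rho_0\|_{L^1(\mathbb{R})}$ for all $t$, whence \eqref{seqBoundedRho}, while $0\le\mathcal{S}_0(\xi)\le1$ is built into the notion of admissible solution; note that the space-time bound \eqref{eqRhoSemiContinuity}, crucial for $B_1$, is deliberately \emph{not} imposed here. For well-definedness of $\mathcal{I}_\delta$: given $\rho\in B_2$ and $t$, the subjective density $\mathcal{R}[\rho(\cdot,x)](t)$ is a convex combination of values of $\rho$ (the kernel $\delta e^{-\delta(t-s)}$ has unit mass on $(-\infty,t]$), hence lies in $[0,1]$, so $c$ is evaluated on $[0,1]$; then $a\mapsto\int_{-1}^a c(\mathcal{R}[\rho(\cdot,x)](t))\d x-\int_a^1 c(\mathcal{R}[\rho(\cdot,x)](t))\d x$ is continuous, strictly increasing ($c\ge1>0$), negative at $-1$ and positive at $1$, so the intermediate value theorem yields a unique $\xi(t)=\mathcal{I}_\delta(\rho)(t)\in(-1,1)$, exactly as in item~2 of Proposition~\ref{thCoutLineaireLS}.

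The core of the argument is the uniform Lipschitz bound. Since $\xi=\mathcal{I}_\delta(\rho)$ solves the equilibrium \eqref{seqElKhatibEiko} with $\rho$ replaced by $\tilde\rho:=\mathcal{R}[\rho]$, Lemma~\ref{thLipEstimate} and the rearrangement leading to \eqref{eqL1ContinuityCalculation} apply verbatim to $\tilde\rho$, giving $2|\xi(t)-\xi(s)|\le I_1+I_2$ where both $I_1$ and $I_2$ are bounded by a fixed multiple of $\int_{-1}^1|\tilde\rho(t,x)-\tilde\rho(s,x)|\d x$ (in $I_1$ after pulling the absolute value inside). In the bare model with general cost this integral is out of reach near the turning curve, but here it is controlled because $\mathcal{R}$ \emph{regularizes in time}: from $\mathcal{R}[g](t)=\delta e^{-\delta t}\int_{-\infty}^t g(s)e^{\delta s}\d s$ one gets $\tfrac{d}{dt}\mathcal{R}[g](t)=\delta\bigl(g(t)-\mathcal{R}[g](t)\bigr)$ a.e., hence $|\mathcal{R}[g](t)-\mathcal{R}[g](s)|\le 2\delta\|g\|_{L^\infty}|t-s|$; with $g=\rho(\cdot,x)$ and $\|\rho\|_\infty\le1$, both $I_1$ and $I_2$ are bounded by $C\delta|t-s|$, so $|\xi(t)-\xi(s)|\le 2\bar\alpha\delta|t-s|$ with $\bar\alpha=\essup_{u\in[0,1]}c'(u)$. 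Together with $|\xi(t)|<1$ this gives \eqref{seqLipStab} with $r=1+2\bar\alpha\delta$. I expect this observation --- that $\mathcal{R}$ trades the spatial irregularity of $\rho$ for time-Lipschitz regularity of $\tilde\rho$, which is precisely what \eqref{eqL1ContinuityCalculation} needs and what is missing for the original model with general cost --- to be the only genuinely non-routine point.

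It remains to check continuity of $\mathcal{I}_\delta$. First, $\mathcal{R}$ is bounded on $L^1$: since $\rho$ and $\rho_n$ coincide on $(-\infty,0)$ (both extended by $\rho_0$), a Fubini estimate gives $\|\mathcal{R}[\rho_n]-\mathcal{R}[\rho]\|_{L^1((0,T)\times\mathbb{R})}\le\|\rho_n-\rho\|_{L^1((0,T)\times\mathbb{R})}$. Then the computation of item~3 of Proposition~\ref{thCoutLineaireLS}, run with $\tilde\rho_n,\tilde\rho$ in place of $\rho_n,\rho$ and with $\bar\alpha$ the Lipschitz constant of $c$, yields $|\xi(t)-\xi_n(t)|\le\tfrac{\bar\alpha}{2}\int_{-1}^1|\tilde\rho(t,x)-\tilde\rho_n(t,x)|\d x$ (using $c\ge1$), hence $\|\xi-\xi_n\|_{L^1((0,T))}\le\tfrac{\bar\alpha}{2}\|\rho-\rho_n\|_{L^1((0,T)\times\mathbb{R})}\to0$. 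Since all the $\xi,\xi_n$ are $r$-Lipschitz by the previous step, $L^1$-convergence upgrades to uniform convergence on $[0,T]$ via a finite $\epsilon$-net argument, exactly as in Proposition~\ref{thCoutLineaireLS}. With all the hypotheses of Theorem~\ref{thMainCorollary} in hand, that theorem delivers a solution $(\rho,\xi)$ of \eqref{seqTrueCL}-\eqref{seqTrueCI}-\eqref{seqModifiedElKhatib} in the sense of Definition~\ref{defSolToProblem}.
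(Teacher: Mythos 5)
Your proposal is correct and follows essentially the same route as the paper: apply Theorem~\ref{thMainCorollary} with $B_2$ and $\mathcal{I}_\delta$, observing that the exponential kernel makes $t\mapsto\mathcal{R}[\rho(\cdot,x)](t)$ Lipschitz with constant $2\delta\|\rho\|_{L^\infty}$, which controls both $I_1$ and $I_2$ in \eqref{eqL1ContinuityCalculation} and removes the need for \eqref{eqRhoSemiContinuity}, while the remaining items (well-posedness of $\xi(t)$ by the intermediate value theorem, continuity of $\mathcal{I}_\delta$ via the $L^1$-contractivity of $\mathcal{R}$ and the computation of item~3 of Proposition~\ref{thCoutLineaireLS}, and the properties of $B_2$) transfer verbatim from the affine-cost case. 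The paper merely asserts that "the analogue of Proposition~\ref{thCoutLineaireLS} is easily justified''; you supply the details, including the explicit Lipschitz constant $2\bar{\alpha}\delta$ and the Fubini estimate, all of which check out.
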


\subsection{The general cost case with relaxed equilibrium}
We consider \eqref{eqConsideredModel} with a modified equilibrium equation \eqref{seqElKhatibEiko}. This time, we suppose that collective behavior of pedestrians makes appear some amount of inertia in the dynamics of $\xi$. Fixing $\epsilon > 0$, we consider as a simplest variant of such dynamics the ODE Cauchy problem
\begin{subequations}\label{eqInertiaEiko}
  \begin{empheq}[left = \empheqlbrace ]{align}
  -\epsilon \dot{\xi}(t) = \int_{\xi(t)}^1 c(\rho(t,x))dx - \int_{-1}^{\xi(t)} c(\rho(t,x))dx \\
  \int_{\xi(0)}^1 c(\rho_0(x))dx - \int_{-1}^{\xi(0)} c(\rho_0(x))dx = 0. \label{seqCIInertia}
\end{empheq}
\end{subequations}
for the $\rho$-driven evolution of the turning curve $\xi$.
Formally, the case $\epsilon=0^+$ corresponds to the standard Hughes's relation between the density and the turning curve; $\epsilon>0$ models a form of relaxation to the equilibrium given by this standard model. The primitive form of the Hughes' model, where the position of the turning curve is determined by an instantaneous Hamilton-Jacobi equation, should be modified to fit this dynamics of the turning curve; this modeling issue will be discussed elsewhere.
\begin{proposition}\label{thWellPosedInertia}
Let $\rho \in L^1((0,T)\times\mathbb{R})$. Let $c$ verifying the conditions \eqref{eqCostConditions}.
There exists a unique solution $\xi$ to the Cauchy problem \eqref{eqInertiaEiko}.
Furthermore, $\xi$ is Lipschitz and the Lipschitz constant is independent of $\rho$.
\end{proposition}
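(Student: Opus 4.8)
The plan is to recognize \eqref{eqInertiaEiko} as a scalar Carath\'eodory ODE whose right-hand side is globally Lipschitz in the unknown and of at most linear growth, invoke the Carath\'eodory existence--uniqueness theorem, and then extract $\rho$-independent bounds with Gronwall's lemma. Concretely, I would first write, for $t\in[0,T]$ and $a\in\mathbb{R}$,
\[
\Phi(t,a):=-\frac1\epsilon\left(\int_a^1 c(\rho(t,x))\,dx-\int_{-1}^a c(\rho(t,x))\,dx\right),
\]
so that \eqref{eqInertiaEiko} becomes $\dot\xi(t)=\Phi(t,\xi(t))$ supplemented by the algebraic relation \eqref{seqCIInertia} that pins down $\xi(0)$. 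This initial value is well posed: the function $a\mapsto\int_{-1}^a c(\rho_0)\,dx-\int_a^1 c(\rho_0)\,dx$ is continuous, negative at $a=-1$, positive at $a=1$, and strictly increasing (its a.e.\ derivative is $2c(\rho_0(a))\ge 2$ by \eqref{eqCostConditions}), so the intermediate value theorem furnishes a unique root $\xi_0\in(-1,1)$ --- precisely the argument already used for the second item of Proposition \ref{thCoutLineaireLS}.

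The core properties of $\Phi$ to verify are: (i) for each fixed $a$, $t\mapsto\Phi(t,a)$ is measurable, which follows from Fubini's theorem since $(t,x)\mapsto c(\rho(t,x))$ is measurable and bounded ($1\le c(\rho)\le c(1)$, the densities being valued in $[0,1]$ and $c$ increasing), hence integrable on every bounded strip; (ii) for a.e.\ $t$, $a\mapsto\Phi(t,a)$ is Lipschitz with constant $\le\tfrac{2c(1)}{\epsilon}$ uniformly in $t$ \emph{and in $\rho$}, because $\partial_a\Phi(t,a)=\tfrac2\epsilon c(\rho(t,a))\in[\tfrac2\epsilon,\tfrac{2c(1)}{\epsilon}]$; and (iii) bounding each integral crudely by $c(1)$ times the length of the interval of integration gives the sublinear estimate $|\Phi(t,a)|\le\tfrac{2c(1)}{\epsilon}(1+|a|)$ for all $a$. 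A right-hand side that is measurable in $t$, globally Lipschitz in the state variable and of at most linear growth yields, by Carath\'eodory's theorem, a unique absolutely continuous $\xi$ solving $\dot\xi=\Phi(t,\xi)$, $\xi(0)=\xi_0$, on the whole interval $[0,T]$; uniqueness for \eqref{eqInertiaEiko} follows at once.

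Finally, to obtain the $\rho$-independent Lipschitz constant, set $y(t):=1+|\xi(t)|$; then $\dot y(t)\le|\dot\xi(t)|=|\Phi(t,\xi(t))|\le\tfrac{2c(1)}{\epsilon}\,y(t)$ for a.e.\ $t$ and $y(0)<2$, so Gronwall's lemma gives $\|\xi\|_{L^\infty(0,T)}\le 2\exp\!\big(\tfrac{2c(1)}{\epsilon}T\big)=:M$, depending only on $c$, $\epsilon$, $T$. Substituting back into (iii) gives $\|\dot\xi\|_{L^\infty(0,T)}\le\tfrac{2c(1)}{\epsilon}(1+M)=:L$, again free of $\rho$, so $\xi\in W^{1,\infty}((0,T))$ with Lipschitz constant $\le L$; this $\rho$-independence is exactly what is needed to verify \eqref{seqLipStab} in Theorem \ref{thMainCorollary} with the set $B_2$. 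The statement is soft and I expect no real obstacle; the only points needing attention are that $\xi(0)$ be well defined (settled as in Proposition \ref{thCoutLineaireLS}), that the dynamics need not be confining --- the equilibrium of $\Phi(t,\cdot)$ is a moving point and $\xi$ may leave $[-1,1]$, so one must argue through the linear-growth bound and Gronwall rather than by a naive pointwise confinement --- and that every constant produced along the way depends only on $c(1)=\|c\|_{L^\infty([0,1])}$, $\epsilon$ and $T$, never on $\rho$ itself.
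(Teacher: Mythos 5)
Your proof is correct and follows essentially the same route as the paper: recast \eqref{eqInertiaEiko} as an ODE whose right-hand side is uniformly Lipschitz in the state variable with constant $2\|c\|_\infty/\epsilon$, invoke an existence--uniqueness theorem (the paper cites Cauchy--Lipschitz; your use of Carath\'eodory is the more precise choice since the right-hand side is only measurable in $t$), and read off a $\rho$-independent Lipschitz bound. Your explicit linear-growth-plus-Gronwall treatment of the fact that the dynamics is not confining to $[-1,1]$, and your verification that $\xi(0)$ is well defined via the intermediate value theorem, are welcome refinements of the paper's terser argument, which only states the bound on $\Psi(t,\xi(t))$ under the assumption $\|\xi\|_\infty\le 1$.
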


\begin{proof}
Let's denote:
$$\Psi(t,a) := \frac{1}{\epsilon} \left[ \int_{a}^1 c(\rho(t,x))dx - \int_{-1}^{a} c(\rho(t,x))dx \right].$$
Notice that for any $a,b \in [-1,1], t \in \mathbb{R}$,
\begin{align} \label{eqLipBoundInertia}
  \left| \Psi(t,a) - \Psi(t,b)\right|
  \leq \frac{1}{\epsilon} \left| \int_a^b 2c(\rho(t,x)) \d x \right|
  \leq \frac{2 \|c\|_{\infty}}{\epsilon}|a-b|.
\end{align}
We also have, for any $\xi$ such that $\|\xi\|_{\infty} \leq 1$:
\begin{align*}
  \left| \Psi(t,\xi(t)) \right|
  \leq \frac{1}{\epsilon}\left| \int_{-1}^{1} \sign(x-\xi(t)) c(\rho(t,x)) \d x \right|
  \leq \frac{ 2 \|c\|_{\infty}}{\epsilon}
\end{align*}
So $\Psi$ is Lipschitz with respect to the $a$ variable and uniformly bounded with respect to the $t$ variable.
We apply the Cauchy-Lipschitz Theorem and recover that there exists a unique local solution to the Cauchy problem \eqref{eqInertiaEiko}.
Using \eqref{eqLipBoundInertia}, we recover that the solution is global on $[0,T]$ and that $\xi$ is Lipschitz; moreover, the Lipschitz constant of $\xi$ does not depend on $\rho$.
\end{proof}

\begin{remark}
From Proposition \ref{thWellPosedInertia}, it follows that
$$\widetilde{\mathcal{I}_{\epsilon}} : L^1((0,T)\times \mathbb{R}, [0,1]) \longrightarrow W^{1,\infty}((0,T))$$
that maps any to $\rho$ to the unique $\xi$ solution to \eqref{eqInertiaEiko} is well defined.
\end{remark}

\begin{proposition}\label{thContinuityInertia}
Let $\rho_1, \rho_2 \in L^1((0,T)\times\mathbb{R})$. Let's denote $\xi_{1,2} := \widetilde{\mathcal{I}_{\epsilon}}(\rho_{1,2})$.
Then,
\begin{equation}\label{eqGronwallInertia}
  \|\xi_1 - \xi_2 \|_{\infty} \leq \frac{\|c'\|_{\infty}}{\epsilon}\exp\left[\frac{2T\|c\|_{\infty}}{\epsilon}\right] \, \|\rho_1 - \rho_2 \|_{L^1((0,T)\times (-1,1))}
\end{equation}
\end{proposition}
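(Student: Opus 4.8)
The plan is to estimate the difference $\xi_1(t)-\xi_2(t)$ by exploiting the ODE formulation \eqref{eqInertiaEiko} and applying Gr\"onwall's lemma. First I would write, using the operator $\Psi$ introduced in the proof of Proposition~\ref{thWellPosedInertia} but now keeping track of the density it depends on, $\Psi_i(t,a):=\frac1\epsilon\bigl[\int_a^1 c(\rho_i(t,x))\d x - \int_{-1}^a c(\rho_i(t,x))\d x\bigr]$, so that $\dot\xi_i(t)=\Psi_i(t,\xi_i(t))$ with $\xi_i(0)$ determined by \eqref{seqCIInertia} applied to $\rho_0$ (the same for both, hence $\xi_1(0)=\xi_2(0)$; note both initial turning curves coincide because the initial datum is shared). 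Then
\[
\xi_1(t)-\xi_2(t)=\int_0^t \bigl(\Psi_1(\tau,\xi_1(\tau))-\Psi_2(\tau,\xi_2(\tau))\bigr)\d\tau,
\]
and I would split the integrand as $\bigl(\Psi_1(\tau,\xi_1(\tau))-\Psi_1(\tau,\xi_2(\tau))\bigr)+\bigl(\Psi_1(\tau,\xi_2(\tau))-\Psi_2(\tau,\xi_2(\tau))\bigr)$.

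The first piece is handled by the Lipschitz-in-$a$ bound \eqref{eqLipBoundInertia}, which gives $|\Psi_1(\tau,\xi_1(\tau))-\Psi_1(\tau,\xi_2(\tau))|\le \frac{2\|c\|_\infty}{\epsilon}|\xi_1(\tau)-\xi_2(\tau)|$. For the second piece, since the integration bounds are now the same, the difference reduces to $\frac1\epsilon\int_{-1}^1 \pm\bigl(c(\rho_1(\tau,x))-c(\rho_2(\tau,x))\bigr)\d x$, whose absolute value is at most $\frac{\|c'\|_\infty}{\epsilon}\int_{-1}^1 |\rho_1(\tau,x)-\rho_2(\tau,x)|\d x$, using that $c$ is Lipschitz with constant $\|c'\|_\infty$ (note $c\in W^{1,\infty}$ by \eqref{eqCostConditions}). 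Combining, with $g(t):=|\xi_1(t)-\xi_2(t)|$,
\[
g(t)\le \frac{2\|c\|_\infty}{\epsilon}\int_0^t g(\tau)\d\tau + \frac{\|c'\|_\infty}{\epsilon}\int_0^T\!\!\int_{-1}^1 |\rho_1-\rho_2|\,\d x\,\d\tau .
\]

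Applying the integral form of Gr\"onwall's lemma to this inequality (the constant-in-time forcing term $\frac{\|c'\|_\infty}{\epsilon}\|\rho_1-\rho_2\|_{L^1((0,T)\times(-1,1))}$ pulls out, and the exponential factor is $\exp(\frac{2\|c\|_\infty}{\epsilon}t)\le \exp(\frac{2T\|c\|_\infty}{\epsilon})$) yields precisely \eqref{eqGronwallInertia} after taking the supremum over $t\in[0,T]$. I do not anticipate a genuine obstacle here: the only point requiring a little care is that the two solutions start from the same initial value, so that there is no additional term from $|\xi_1(0)-\xi_2(0)|$; this is immediate since \eqref{seqCIInertia} depends only on $\rho_0$, which is common to both, and the solution of that scalar equation is unique by the strict monotonicity argument used for $\mathcal{I}_0$. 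The estimate \eqref{eqGronwallInertia} then shows $\widetilde{\mathcal{I}_\epsilon}$ is (Lipschitz) continuous from $L^1$ into $\mathcal{C}^0([0,T])$, which together with Proposition~\ref{thWellPosedInertia} lets one apply Theorem~\ref{thMainCorollary} with $B=B_2$.
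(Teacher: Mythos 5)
Your proof is correct and follows essentially the same route as the paper: both express $\xi_1-\xi_2$ via the integral form of the ODE, isolate a term controlled by $\frac{2\|c\|_\infty}{\epsilon}|\xi_1-\xi_2|$ and a forcing term controlled by $\frac{\|c'\|_\infty}{\epsilon}\|\rho_1-\rho_2\|_{L^1}$, and conclude by Gr\"onwall. Your add-and-subtract decomposition reusing \eqref{eqLipBoundInertia} is in fact slightly cleaner than the paper's explicit splitting of the integrals at $\xi_1\vee\xi_2$ and $\xi_1\wedge\xi_2$ with the auxiliary coefficient $\beta(s)$, but the estimates and the final constant are identical.
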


\begin{proof}
We denote $\xi_0$ the unique solution to \eqref{seqCIInertia}. Then, for any $t \in [0,T]$:
$$\xi_{1,2} = \xi_0 - \int_0^t \Psi_{1,2}(s, \xi_{1,2}(s)) \d s$$
Then, writing $\vee,\wedge$  for $\min,\max$, repsectively, we make the following calculations:
\begin{align*}
  &\xi_2(t) - \xi_1(t)\\
  &= \int_0^t \Psi_1(s, \xi_1(s)) - \Psi_2(s, \xi_2(s)) \d s \\
  &= \frac{1}{\epsilon} \int_0^t \left[ \int_{-1}^{\xi_1(s)} c(\rho_1(s,x)) \d x - \int_{\xi_1(s)}^{1} c(\rho_1(s,x)) \d x
  - \int_{-1}^{\xi_2(s)} c(\rho_2(s,x)) \d x + \int_{\xi_2(s)}^{1} c(\rho_2(s,x)) \d x\right] \d s \\
  &= \frac{1}{\epsilon} \int_0^t \biggl[ \int_{-1}^{(\xi_1 \vee \xi_2)(s)} c(\rho_1(s,x)) - c(\rho_2(s,x)) \d x \pm \int_{(\xi_1 \vee \xi_2)(s)}^{(\xi_1 \wedge \xi_2)(s)} c(\rho_1(s,x)) + c(\rho_2(s,x)) \d x\\
   & \qquad+ \int_{(\xi_1 \wedge \xi_2)(s)}^{1} c(\rho_2(s,x)) - c(\rho_1(s,x)) \d x\biggr] \d s
\end{align*}
And consequently,
\begin{align*}
  \left|\xi_1(t) - \xi_2(t)\right|
  &\leq \frac{1}{\epsilon} \int_0^t \int_{(\xi_1 \vee \xi_2)(s)}^{(\xi_1 \wedge \xi_2)(s)} c(\rho_1(s,x)) + c(\rho_2(s,x)) \d x \d s \\
  &+ \frac{1}{\epsilon} \int_0^t \int_{-1}^1 \left| c(\rho_1(s,x)) - c(\rho_2(s,x)) \right| \d s \d x \;=:\;J_1+J_2.
\end{align*}
For the term $J_2$ we can use the Lagrange inequality denoting $\|c'\|_{\infty} := \sup_{p\in [0,1]} |c'(p)|$. We get:
$$
  J_2 \leq \frac{\|c'\|_{\infty}}{\epsilon} \|\rho_1 - \rho_2 \|_{L^1((0,T)\times (-1,1))}.
$$
For the the term $J_1$,
 notice that, thanks to the cost conditions \eqref{eqCostConditions}, for any $s \in [0,t]$,
$$2 |\xi_1(s) - \xi_2(s)| \leq
\int_{(\xi_1 \vee \xi_2)(s)}^{(\xi_1 \wedge \xi_2)(s)} c(\rho_1(s,x))
+ c(\rho_2(s,x)) \d x \leq 2 \|c\|_{\infty} |\xi_1(s) - \xi_2(s)|$$
Consequently for any $s \in [0,T]$, there exists $\beta(s) \in [2 \, , \, 2 \, \|c\|_{\infty}]$ such that
$$\int_{(\xi_1 \vee \xi_2)(s)}^{(\xi_1 \wedge \xi_2)(s)} c(\rho_1(s,x))
+ c(\rho_2(s,x)) \d x = \beta(s)|\xi_1(s) - \xi_2(s)|.$$
Then $\beta \in L^{\infty}((0,T)) \subset L^1((0,T))$.
We are now in a position to use Gronwall's inequality with integrable coefficients. That inequality still holds without the continuity of $\beta$ if we use the Lebesgue differentiation Theorem. We thus reach to
\begin{align*}
  &\left|\xi_1(t) - \xi_2(t)\right|
  \leq  \int_0^t \frac{\beta(s)}{\epsilon} |\xi_1(s) - \xi_2(s)| \d s
  + \frac{\|c'\|_{\infty}}{\epsilon} \|\rho_1 - \rho_2 \|_{L^1}
 \end{align*}
which yields the subsequent estimates
\begin{align*}
  &\left|\xi_1(t) - \xi_2(t)\right| \leq \frac{\|c'\|_{\infty}}{\epsilon} \|\rho_1 - \rho_2 \|_{L^1} \exp \left[ \int_0^t \frac{\beta(s)}{\epsilon}\d s \right], \\
  & \|\xi_1 - \xi_2\|_{\infty} \leq \frac{\|c'\|_{\infty}}{\epsilon}\exp\left[\frac{2T\|c\|_{\infty}}{\epsilon} \right] \, \|\rho_1 - \rho_2 \|_{L^1}
\end{align*}
\end{proof}
\begin{remark}
One can check that, in the relaxed equilibrium setting, we never used any property of $\rho$ apart from the universal bounds $0\leq \rho \leq 1$. Consequently, in this case we also use:
\begin{equation}
  B_{2} = \left\lbrace \rho \in B_{L^1}(0,T \, \|\rho_0\|_{L^1}) \textrm{ s.t. } 0  \leq \rho \leq 1 \right\rbrace \tag{\ref{eqBsetInertia}}
\end{equation}
\end{remark}
Here's the final result in this relaxed equilibrium setting:
\begin{proposition}
  Let $\rho_0 $ satisfy \eqref{eq:HypRho_0}. Let $c$ verifying \eqref{eqCostConditions}. Then problem \eqref{seqTrueCL}-\eqref{seqTrueCI}-\eqref{eqInertiaEiko} admits at least one solution.
\end{proposition}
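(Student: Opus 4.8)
The plan is to realize the solution as a fixed point of $\mathcal{S}_0\circ\widetilde{\mathcal{I}_{\epsilon}}$ by invoking Theorem~\ref{thMainCorollary} with $\mathcal{I}=\widetilde{\mathcal{I}_{\epsilon}}$ and with the admissible set $B=B_2$ of \eqref{eqBsetInertia}. Four facts must be verified: that $B_2$ is convex, closed and bounded in $L^1((0,T)\times\mathbb{R})$; that $\widetilde{\mathcal{I}_{\epsilon}}$ is continuous from $(B_2,\|\cdot\|_{L^1((0,T)\times\mathbb{R})})$ into $(\mathcal{C}^0([0,T]),\|\cdot\|_\infty)$; that \eqref{seqLipStab} holds, i.e. $\widetilde{\mathcal{I}_{\epsilon}}(B_2)\subset B_{W^{1,\infty}}(0,r)$ for some $r>0$; and that \eqref{seqConditionBpossible} holds, i.e. for every $\xi\in B_{W^{1,\infty}}(0,r)$ the unique admissible solution of \eqref{seqTrueCL}-\eqref{seqTrueCI} lies in $B_2$. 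The first point is immediate, since $B_2$ is the intersection of the closed ball $B_{L^1}(0,T\|\rho_0\|_{L^1})$ with the closed convex set $\{0\le\rho\le1\}$. The continuity (indeed, Lipschitz continuity) of $\widetilde{\mathcal{I}_{\epsilon}}$ on $B_2$ is exactly Proposition~\ref{thContinuityInertia}, using $\|\rho_1-\rho_2\|_{L^1((0,T)\times(-1,1))}\le\|\rho_1-\rho_2\|_{L^1((0,T)\times\mathbb{R})}$ in the right-hand side of \eqref{eqGronwallInertia}.

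The only verification requiring a short argument is the uniform $W^{1,\infty}$ bound \eqref{seqLipStab}. By Proposition~\ref{thWellPosedInertia} the turning curve $\xi=\widetilde{\mathcal{I}_{\epsilon}}(\rho)$ is Lipschitz with $\|\dot\xi\|_\infty\le 2\|c\|_\infty/\epsilon$, a constant independent of $\rho$ (this is \eqref{eqLipBoundInertia} evaluated along $\xi$). To bound $\|\xi\|_\infty$ it is enough to bound $|\xi(0)|$: the initial position is the unique solution of the equilibrium relation \eqref{seqCIInertia}, and the strict monotonicity / intermediate value argument already used in the proof of Proposition~\ref{thCoutLineaireLS} shows $\xi(0)\in(-1,1)$ — the map $a\mapsto\int_{-1}^{a}c(\rho_0)-\int_{a}^{1}c(\rho_0)$ is continuous, strictly increasing because $c\ge1>0$, negative at $a=-1$ and positive at $a=1$. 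Hence $\|\xi\|_\infty\le|\xi(0)|+T\|\dot\xi\|_\infty\le 1+2T\|c\|_\infty/\epsilon$, and choosing $r:=1+2(T+1)\|c\|_\infty/\epsilon$ guarantees $\widetilde{\mathcal{I}_{\epsilon}}(B_2)\subset B_{W^{1,\infty}}(0,r)$.

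Finally, \eqref{seqConditionBpossible} holds regardless of $\xi$ and of the cost: for any $\xi\in B_{W^{1,\infty}}(0,r)$ the admissible solution $\rho=\mathcal{S}_0(\xi)$ takes values in $[0,1]$ by Definition~\ref{DefSolSansGerme}, and applying the $L^1$-contraction of Theorem~\ref{thPreuveUnicite} against $\hat\rho\equiv0$ (the admissible solution with zero datum, since $f(0)=0$) gives $\|\rho(t,\cdot)\|_{L^1(\mathbb{R})}\le\|\rho_0\|_{L^1(\mathbb{R})}$ for all $t$, hence $\|\rho\|_{L^1((0,T)\times\mathbb{R})}\le T\|\rho_0\|_{L^1(\mathbb{R})}$, which is \eqref{seqBoundedRho}; thus $\rho\in B_2$. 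With $f$ satisfying the non-degeneracy condition \eqref{NonDegen} (as holds for the LWR flux $f(\rho)=\rho v(\rho)$), all hypotheses of Theorem~\ref{thMainCorollary} are met, and it produces $(\rho,\xi)$ solving \eqref{seqTrueCL}-\eqref{seqTrueCI}-\eqref{eqInertiaEiko} in the sense of Definition~\ref{defSolToProblem}.

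I expect no serious obstacle here: the relaxation $\epsilon>0$ furnishes, via Propositions~\ref{thWellPosedInertia} and \ref{thContinuityInertia}, both the Lipschitz regularity of $\xi$ uniformly in $\rho$ and the continuity of $\rho\mapsto\xi$, so the construction of $B$ collapses to the universal bounds $0\le\rho\le1$ and $\|\rho\|_{L^1}\le T\|\rho_0\|_{L^1}$ — no control on the modulus of continuity of $\rho$ near the turning curve is needed, which is precisely what obstructs the treatment of the unrelaxed equilibrium \eqref{seqElKhatibEiko} with general cost. The only mildly non-routine step is recognizing that \eqref{seqLipStab} demands a sup-norm bound on $\xi$ itself, not merely on $\dot\xi$, and handling it through the confinement $\xi(0)\in(-1,1)$ dictated by the equilibrium initial condition \eqref{seqCIInertia}.
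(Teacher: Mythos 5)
Your proposal is correct and follows essentially the same route as the paper, whose proof consists precisely of applying Theorem~\ref{thMainCorollary} with $B=B_2$ and checking the hypotheses on $\widetilde{\mathcal{I}_{\epsilon}}$ via Propositions~\ref{thWellPosedInertia} and \ref{thContinuityInertia}. You merely spell out the verifications (convexity/closedness of $B_2$, the sup-norm bound on $\xi$ via $\xi(0)\in(-1,1)$, and the $L^1$-contraction giving $\mathcal{S}_0(\xi)\in B_2$) that the paper leaves implicit.
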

\begin{proof}
  We only have to apply Corollary \ref{thMainCorollary} with $B_{2}$ as a $B$ set and check that, using Propositions \ref{thWellPosedInertia} and \ref{thContinuityInertia}, all the assumptions on $\widetilde{\mathcal{I}_{\epsilon}}$ are satisfied.
\end{proof}

\section{Hughes' model with constrained evacuation at exit}\label{sec:constrained-exit}

In this section, we illustrate the robustness of our approach by modifying the Hughes model at the level of boundary conditions for the density, allowing for the realistic feature of capacity drop (see \cite{Andreianov2014CDA,Donadello2015} and references therein).
We consider the following dynamics for $\rho$ introduced in \cite{Andreianov2014CDA} on the basis of the theory of \cite{Colombo2007-ky,Andreianov2010-yh}:
\begin{subequations}\label{eqConstrainedCL}
  \begin{empheq}[left = \empheqlbrace]{align}
    \rho_t + &\left[ \sign(x - \xi(t)) f(\rho) \right]_x = 0 \\
    f(\rho(t, 1)) &\leq g \left( \int_\sigma^1 w_1(x) \rho(t,x) \d x \right) \label{seqExitR}\\
    f(\rho(t, -1)) &\leq g \left( \int_{-1}^{-\sigma} w_{-1}(x) \rho(t,x) \d x \right) \label{seqExitL}\\
    \rho(0,\cdot) &= \rho_0 (\cdot).
  \end{empheq}
\end{subequations}
The equations \eqref{seqExitR}-\eqref{seqExitL} prescribe the behaviour at exits situated at $x=\pm 1$; as in previous sections, we set up the conservation law for $\rho$ in the whole space, but the initial condition \eqref{eq:HypRho_0} is confined to the domain of interest $(-1,1)$. The flux $f(\rho)$ of pedestrian going through the exits is limited by respective constraints (we take a common nonlinearity $g$ for the sake of conciseness, but it is straightforward to extend the setting distinguishing $g_{1}$ and $g_{-1}$).
This flux limiter $g$ depends non locally of $\rho(t,\cdot)$ and of a weight $w$ supported in a vicinity of length $1- \sigma$ around the exits. This type of constraint models the well-known phenomenon of \textit{capacity drop} which, in extreme situations, corresponds to a panic behaviour at exits located at $x = \pm 1$, as discussed in \cite{Andreianov2014CDA} and \cite{Donadello2015}.
This model, allowing to consider constrained evacuation at exits, is phenomenologically more relevant than the model with open-end condition considered above (and it includes the previous model, for the trivial choice  $g\equiv \max_{[0,1]} f$, see Remark~\ref{rem:trivialconstraint}). As an example, this constrained evacuation model is able to reproduce the ``Faster is Slower'' effect at exits (see \cite{Donadello2015}).

\smallskip
In the following, we'll use the results of \cite{Donadello2015} and adapt them to our framework. We use the notations proposed in this paper:
\begin{itemize}
\item Since $f$ is concave positive such that $f(0)=f(1)$, there exists a $\bar{\rho} \in [0,1]$ such that $f'(\rho)(\bar{\rho} -\rho) > 0$ for a.e. $\rho \in [0,1]$.

\item We fix $\sigma \in (0,1)$. This is the threshold of influence on the exit, meaning that the pedestrian located before $x= \sigma$ have no influence on the exit congestion at $x=1$.
\end{itemize}

\smallskip
Let us take the strongest assumptions used in \cite{Andreianov2014CDA,Donadello2015}:
\begin{align}
\left\lbrace \begin{matrix}
 w_1 \in W^{1,\infty}((\sigma, 1],\mathbb{R}^+) \textrm{ s.t. } \int_{\sigma}^1 w_1 = 1 \\
  w_{-1} \in W^{1,\infty}([-1,-\sigma),\mathbb{R}^+) \textrm{ s.t. } \int_{-1}^{-\sigma} w_{-1} = 1
\end{matrix} \right.\label{eqConditionsW} \\
\label{eqConditionsG}
g \in W^{1,\infty}(\mathbb{R}^+,(0,f(\bar{\rho})]) \textrm{ is non-increasing.}
\end{align}

We can now introduce the notion of solution we'll use for $\rho$ combining the one in \cite{Colombo2007-ky} and Definition \ref{DefSolSansGerme}:
\begin{definition}\label{defSolconstrained}
Let $\xi \in W^{1,\infty}((0,T), (-1,1))$. Let $\rho_0 \in L^1(\mathbb{R},[0,1])$ supported in $[-1,1]$.
Let $f$  be a concave positive flux such that $f(0) = 0 = f(1)$ and $F(t,x,\rho) :=  \sign(x - \xi(t))f(\rho)$.
Let $g$, $\omega_{-1}$ and $\omega_1$ satisfy \eqref{eqConditionsW}-\eqref{eqConditionsG}. \\
We say that $\rho \in L^{1}((0,T)\times \mathbb{R})$ is an admissible solution to \eqref{eqConstrainedCL} if:\\

for all $\phi  \in \mathcal{C}^{\infty}_c((0,T) \times \mathbb{R})$,
  \begin{equation}\label{eqSolFaibleConst}
    \iint_{(0,T)\times \mathbb{R}} \rho \phi_t + F(t,x,\rho) \phi_x \d t \d x = 0,
  \end{equation}
moreover, setting
\begin{align}
  Q_{-1}(t) := g \left( \int_{-1}^{-\sigma} w_{-1}(x) \rho(t,x) \d x \right), \,\,
   Q_1(t) := g \left( \int_{\sigma}^{1} w_1(x) \rho(t,x) \d x \right),
\end{align}
there holds:

\smallskip
$\bullet$ For all positive $\phi \in \mathcal{C}^{\infty}-c(\{ x > \xi(t) \})$, for all $k \in \mathbb{R}$,
  \begin{align}\label{eqSolEntropyConstR}
     - \iint_{(0,T)\times \mathbb{R}} \left| \rho - k \right|\phi_t + q(\rho,k)\phi_x \d t \d x
     - 2 \int_{0}^T \left[ 1 - \frac{Q_1(t)}{f(\bar{\rho})}\right]f(k)  \phi(t,1) \d x 
     - \int_{\mathbb{R}} |\rho_0 - k| \phi(0,x) \d x \leq 0.
 \end{align}

\smallskip
$\bullet$ For all positive $\phi \in \mathcal{C}^{\infty}_c(\{ x < \xi(t) \})$,
for all $k \in \mathbb{R}$,
   \begin{align}\label{eqSolEntropyConstL}
      - \iint_{(0,T)\times \mathbb{R}} \left| \rho - k \right|\phi_t + q(\rho,k)\phi_x \d t \d x
      &- 2 \int_{0}^T \left[ 1 - \frac{Q_{-1}(t)}{f(\bar{\rho})}\right]\left(-f(k)\right) \phi(t,-1) \d x 
      - \int_{\mathbb{R}} |\rho_0 - k| \phi(0,x) \d x \leq 0 .
    \end{align}

   \smallskip
   $\bullet$ For all positive $\phi \in \mathcal{C}^{\infty}$ supported on $[a,b]$ such that $a<-1$, $1<b$ we have:
    	\begin{subequations}\label{eqFluxLimitp1}
      \begin{align}
   \int_0^T \int_a^{-1} \rho \phi_t + F(t,x,\rho) \phi_x \d t \d x \leq \int_0^T Q_{-1}(t) \phi(t,-1) \d t \\
       \int_0^T \int_1^b \rho \phi_t + F(t,x,\rho) \phi_x \d t \d x \leq \int_0^T Q_1(t) \phi(t,1) \d t
      \end{align}
    \end{subequations}

%

\end{definition}

\begin{remark}
 As detailled in \cite{Andreianov2010-yh}, equations \eqref{eqFluxLimitp1} 
 combined with the weak solution property \eqref{eqSolFaibleConst} imply that for a.e. $t \geq 0$, $f(\gamma_{L,R}^1 \rho(t)) \leq Q_1(t)$ and $-f(\gamma^{-1}_{L,R}\rho(t)) \geq -Q_{-1}(t)$. This corresponds to the expected limited flux condition.
\end{remark}

\begin{remark}\label{rem:trivialconstraint}
  One can notice that if for all $t \geq 0 , \, g(t) = f(\bar{\rho})$ then the flux is not limited at exits and $1 - \frac{Q_1(t)}{f(\bar{\rho})} = 1 - \frac{Q_{-1}(t)}{f(\bar{\rho})}  = 0$.
  Then, this definition is exactly Definition \ref{DefSolSansGerme}.
\end{remark}

\smallskip

We have the following results:
\begin{proposition}\label{thExistRho}
Let $\rho_0$ verify \eqref{eq:HypRho_0}. Let $\xi \in W^{1,\infty}((0,T),(-1,1))$.
There exists a solution to \eqref{eqConstrainedCL} in the sense of Definition \ref{defSolconstrained}.
\end{proposition}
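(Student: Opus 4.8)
The plan is to adapt the finite-volume construction of \cite{Andreianov2014CDA,Donadello2015} for the non-locally constrained LWR equation to the present discontinuous-flux setting, the moving interface $\{x=\xi(t)\}$ being treated as in the proof of Theorem~\ref{thPreuveUnicite}, i.e.\ with the purely conservative (``no extra admissibility'') interface coupling of Definition~\ref{DefSolSansGerme}, in the spirit of \cite{AndrKarlRis-2011}. Observe first that, $\xi$ being continuous on the compact $[0,T]$ with values in the open interval, $\mathrm{dist}(\xi([0,T]),\{\pm 1\})>0$; hence on a fine enough Cartesian mesh satisfying a CFL condition the cells straddling the exits $x=\pm 1$ and those straddling the interface are disjoint. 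I would then run a monotone Godunov-type scheme on $\mathbb{R}$: standard Godunov fluxes for $f$ on cells away from the exits, with the sign $\sign(x-\xi(t))$ read on the cell and a conservative numerical flux on the two cells adjacent to $\xi(t^n)$; and the Colombo--Goatin constrained flux $\min\{\,\text{Godunov flux}\,,\,Q^n_{\pm 1}\}$ on the exit cells, where $Q^n_{\pm 1}$ is the discrete analogue of $g\bigl(\int w_{\pm 1}\rho\bigr)$ computed \emph{explicitly} from the current density. Monotonicity yields $0\le\rho_h\le 1$ at once; conservativity and finite speed of propagation then give $\|\rho_h(t,\cdot)\|_{L^1(\mathbb{R})}\le\|\rho_0\|_{L^1(\mathbb{R})}$ and preservation of a fixed compact support, as in \eqref{seqBoundedRho}.

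Next I would collect the compactness needed to pass to the limit. In a fixed neighbourhood of each exit the scheme is exactly the constrained scheme of \cite{Donadello2015}, whose uniform $BV$ (hence $L^1_{loc}$-compactness) estimates apply there; away from the exits \emph{and} from $\{x=\xi(t)\}$ the scheme is a consistent monotone scheme for the homogeneous law $\rho_t\pm f(\rho)_x=0$, so I would obtain $L^1_{loc}$-compactness exactly as in Proposition~\ref{thContinuityOfS} via the averaging/compensated-compactness lemma (Theorem~5.4.1 of \cite{Perthame2003-au}) under the non-degeneracy of $f$ — automatic in the Hughes case $f(\rho)=\rho v(\rho)$, and in general handled by assuming \eqref{NonDegen} or by regularising $f$ and removing the regularisation by $L^1$-contraction. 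Since $\meas(\{x=\xi(t)\})=0$ these local statements patch into a.e.\ convergence of a subsequence on $(0,T)\times\mathbb{R}$, and into $L^1$ (not merely $L^1_{loc}$) convergence by the fixed compact support. Finally, the maps $t\mapsto\int_{-1}^{-\sigma}w_{-1}\rho_h(t,\cdot)$ and $t\mapsto\int_\sigma^1 w_1\rho_h(t,\cdot)$ are uniformly Lipschitz in $t$ — from the discrete weak formulation together with $w_{\pm 1}\in W^{1,\infty}$ (cf.\ \eqref{eqConditionsW}) and the bound $2\sup_{p\in[0,1]}|f(p)|$ on the flux, just as in Lemma~\ref{thControlRhoT} — hence equicontinuous; since $g$ is Lipschitz by \eqref{eqConditionsG}, the discrete constraints $Q^n_{\pm 1}$ converge uniformly to the $Q_{\pm 1}$ given by the right-hand sides of \eqref{seqExitR}--\eqref{seqExitL}.

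With these convergences in hand, passing to the limit is standard. Discrete conservativity gives the weak formulation \eqref{eqSolFaibleConst}; the discrete Kruzhkov inequalities of the monotone scheme, after discrete summation by parts and using $Q^n_{\pm 1}\to Q_{\pm 1}$, produce the constrained entropy inequalities \eqref{eqSolEntropyConstR}--\eqref{eqSolEntropyConstL} on each side of the interface, the ``constraint defect'' terms $[1-Q_{\pm 1}/f(\bar{\rho})]f(k)$ appearing exactly as in \cite{Colombo2007-ky,Andreianov2010-yh,Donadello2015}; and the discrete flux bound at the exit cells passes to the limit into \eqref{eqFluxLimitp1}. At $\{x=\xi(t)\}$ no condition beyond the Rankine--Hugoniot relation is required, and that relation is already contained in \eqref{eqSolFaibleConst}, which is precisely where the present problem is genuinely easier than a general discontinuous-flux one; note that only existence is asserted, so no uniqueness argument is needed at this stage.

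The step I expect to be the main obstacle is the interplay, at the level of the approximations, between the non-local exit constraints near $x=\pm 1$ and the discontinuous flux along $\{x=\xi(t)\}$: one must check that the $BV$ machinery borrowed from \cite{Donadello2015} near the exits is not contaminated by the interface, and conversely that the compactness argument near the interface (where $BV$ bounds need not hold, cf.\ \cite{andreianov2021} and Proposition~\ref{thContinuityOfS}) coexists with it. The observation that a fixed Lipschitz $\xi$ keeps the turning curve at positive distance from the exits is exactly what decouples the two regions and makes this feasible; absent that separation — or if one insisted on a single global estimate — the possibly non-$BV$ behaviour at the turning curve would be the genuine difficulty, the same one that forces the detour through the averaging lemma in Proposition~\ref{thContinuityOfS}.
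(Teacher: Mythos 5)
Your overall strategy coincides with the paper's: a finite volume scheme combining the constrained Godunov fluxes of \cite{Donadello2015} at the exits with a conservative interface coupling at $\{x=\xi(t)\}$, a decoupling of the three interfaces via the positive distance between $\xi([0,T])$ and $\{\pm 1\}$ (realised in the paper by a partition of unity), compactness from the averaging lemma away from the interfaces, and a limit passage into \eqref{eqSolFaibleConst}, \eqref{eqSolEntropyConstR}--\eqref{eqSolEntropyConstL} and \eqref{eqFluxLimitp1}, with uniqueness left to Corollary \ref{thUniquenessConstraint}. The observation that only conservativity is required at the turning curve, so that any consistent conservative interface flux should select the right limit, is also the point the paper exploits.

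The one step you treat too lightly is the discrete handling of the \emph{moving} interface. You propose a fixed Cartesian mesh with the sign $\sign(x-\xi(t))$ ``read on the cell'' and an unspecified ``conservative numerical flux on the two cells adjacent to $\xi(t^n)$'', and assert that monotonicity and the $L^\infty$ bound follow ``at once''. This is not automatic: as $\xi^n$ drifts across cell boundaries the flux function of a given cell changes sign between time steps, the location of the interface flux jumps by a cell, and the interface Riemann problem must be solved in the frame moving with speed $s^n\approx\dot\xi$ (the relevant fluxes are $\pm f(\rho)-s^n\rho$, not $\pm f(\rho)$), which affects both the monotonicity of the update and the consistency of the entropy inequalities near $\{x=\xi(t)\}$. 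The paper sidesteps exactly this difficulty by using the mesh of \cite{Abraham2022,AbrahamPreprint} adapted to the discretized turning curve, with trapezoidal cells straddling $\xi$ and the interface Godunov flux $\bold{F}_{int}^n$ defined through the crossing condition $\God_{f^n_L}(\rho_L,k)=\God_{f^n_R}(k,\rho_R)$ for $f^n_{L,R}(\rho)=\pm f(\rho)-s^n\rho$. Your proposal would need either this adapted mesh or a careful fixed-mesh substitute to be complete; the remaining ingredients (the $BV$ or averaging-lemma compactness near the exits versus near the interface, the equicontinuity and uniform convergence of the discrete constraints $Q^n_{\pm 1}$, and the limit passage) match the paper's argument, which simply delegates them to \cite{Donadello2015} and \cite{Abraham2022,AbrahamPreprint} via the partition of unity.
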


The proof of Proposition \ref{thExistRho} is postponed to the Appendix. It is obtained via a convegent finite volume scheme. The details of the scheme and the proof of convergence can be found there.

Using the results from \cite{Colombo2007-ky}, \cite{Donadello2015}, \cite{Andreianov2014CDA} and a partitionning argument we prove a corollary of Theorem \ref{thContinuityOfS}:
\begin{corollary}\label{thUniquenessConstraint}
  Let $\rho_0$ verify \eqref{eq:HypRho_0}. Let $\xi \in W^{1,\infty}((0,T),(-1,1))$.
  There exists at most one solution $\rho$ of \eqref{eqConstrainedCL} in the sense of Definition \ref{defSolconstrained}.
Using Proposition \ref{thExistRho}, the solver operator
  $$\mathcal{S}_g : (W^{1,\infty}((0,T), (-1,1)), \|\cdot\|_{\infty}) \longrightarrow (L^1((0,T)\times (-1,1)), \|\cdot\|_{L^1}),$$
  that maps any $\xi$ to the unique solution $\rho$ to \eqref{eqConstrainedCL} is well defined and continuous.
\end{corollary}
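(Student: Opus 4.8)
The plan is to combine two ingredients: the turning‑curve argument from the proof of Theorem~\ref{thPreuveUnicite}, and the single‑point flux‑constraint theory of \cite{Colombo2007-ky,Andreianov2010-yh} used for this kind of exit condition in \cite{Andreianov2014CDA,Donadello2015}; continuity of $\mathcal S_g$ will then follow the line of the proof of Proposition~\ref{thContinuityOfS}. Note first that, $\xi$ being continuous on $[0,T]$ with values in the \emph{open} interval $(-1,1)$, the compact set $\xi([0,T])$ is bounded away from $\pm1$; hence the three interfaces $\{x=\xi(t)\}$, $\{x=1\}$ and $\{x=-1\}$ are mutually disjoint and may be analysed independently.

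For uniqueness, let $\rho,\hat\rho$ be two solutions in the sense of Definition~\ref{defSolconstrained} with the same $\xi$ and data $\rho_0,\hat\rho_0$. I would apply Kruzhkov's doubling of variables separately on each of the four regions $\{x<-1\}$, $\{-1<x<\xi(t)\}$, $\{\xi(t)<x<1\}$, $\{x>1\}$ --- on each of which $\rho$ solves a conservation law with $x$‑independent flux --- and patch the pieces along the three interfaces. Along $\{x=\xi(t)\}$ the patching is exactly as in Theorem~\ref{thPreuveUnicite}: since $f_L=-f$ is convex negative and $f_R=f$ concave positive, the defect $q_R(\gamma_R\rho,\gamma_R\hat\rho)-q_L(\gamma_L\rho,\gamma_L\hat\rho)$ vanishes for a.e.\ $t$. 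Along each exit $\{x=\pm1\}$ the flux is continuous but (as noted after Proposition~\ref{thExistRho}) the one‑sided traces obey the flux constraints with respective levels $Q_1(t)$ and $Q_{-1}(t)$; the constraint‑adapted comparison of \cite{Colombo2007-ky,Andreianov2010-yh} then yields a boundary defect bounded by a constant times $|Q_1(t)-\widehat{Q_1}(t)|$, resp.\ $|Q_{-1}(t)-\widehat{Q_{-1}}(t)|$, where $\widehat{Q_{\pm1}}$ denote the constraint levels of $\hat\rho$. Assembling, one obtains for every $0\le\phi\in\mathcal C^\infty_c((0,T)\times\mathbb{R})$ the Kato‑type inequality
\[
-\iint_{(0,T)\times\mathbb{R}}|\rho-\hat\rho|\,\phi_t+q(\rho,\hat\rho)\,\phi_x\,\d t\,\d x \le C\int_0^T\bigl(|Q_1-\widehat{Q_1}|\,\phi(t,1)+|Q_{-1}-\widehat{Q_{-1}}|\,\phi(t,-1)\bigr)\d t+\int_{\mathbb{R}}|\rho_0-\hat\rho_0|\,\phi(0,x)\,\d x.
\]

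Since $g\in W^{1,\infty}$ and $w_{\pm1}\in W^{1,\infty}\subset L^\infty$, one has $|Q_{\pm1}(t)-\widehat{Q_{\pm1}}(t)|\le\|g'\|_\infty\,\|w_{\pm1}\|_\infty\,\|\rho(t,\cdot)-\hat\rho(t,\cdot)\|_{L^1((-1,1))}$. Integrating the Kato inequality over the trapezoids $\mathcal T_{a,b}^{0,t}$ of \eqref{eqDefTrapez} and letting $a\to-\infty$, $b\to+\infty$ (so that $x=\pm1$ lie in the interior) then gives $\|\rho(t,\cdot)-\hat\rho(t,\cdot)\|_{L^1}\le\|\rho_0-\hat\rho_0\|_{L^1}+C'\int_0^t\|\rho(s,\cdot)-\hat\rho(s,\cdot)\|_{L^1}\,\d s$; Gronwall's lemma yields $L^1$‑stability, in particular uniqueness. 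Together with the existence statement of Proposition~\ref{thExistRho}, this shows $\mathcal S_g$ is well defined.

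For continuity, take $\xi_n\to\xi$ in $\|\cdot\|_\infty$ and set $\rho_n:=\mathcal S_g(\xi_n)$. On compact subsets of $\{x<\xi(t)\}\cup\{x>\xi(t)\}$ lying outside two fixed neighbourhoods of $x=\pm1$, for $n$ large $\rho_n$ coincides on trapezoids of the form \eqref{eqDefTrapez} with Kruzhkov solutions of a fixed conservation law, so the averaging compactness lemma of \cite{Perthame2003-au} applies as in the proof of Proposition~\ref{thContinuityOfS}; in the two fixed neighbourhoods of the exits one instead uses the uniform estimates of the finite‑volume scheme of the Appendix (the bounds $0\le\rho_n\le1$, the time modulus of continuity of Lemma~\ref{thControlRhoT}, and the scheme's compactness estimate) to gain $L^1_{loc}$ compactness. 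Extracting $\rho_n\to\rho$ a.e.\ and in $L^1((0,T)\times(-1,1))$, one has $Q_{1,n}(t)\to Q_1(t)$, $Q_{-1,n}(t)\to Q_{-1}(t)$ by dominated convergence and continuity of $g$, and $\sign(x-\xi_n(t))\to\sign(x-\xi(t))$ a.e.\ since $\meas\{x=\xi(t)\}=0$; passing to the limit in \eqref{eqSolFaibleConst}, \eqref{eqSolEntropyConstR}--\eqref{eqSolEntropyConstL} and \eqref{eqFluxLimitp1} (whose test functions avoid $\{x=\xi_n(t)\}$ for $n$ large) shows $\rho$ solves \eqref{eqConstrainedCL}, hence $\rho=\mathcal S_g(\xi)$ by uniqueness, and the usual subsequence argument gives convergence of the whole sequence. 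The main obstacle is precisely this compactness near the exits --- there the point constraint is active, $\rho_n$ is not an entropy solution of a plain conservation law, and the averaging‑lemma argument of Proposition~\ref{thContinuityOfS} fails, so one must rely on the quantitative bounds from the finite‑volume approximation; a secondary subtlety is to carry the nonlocal dependence of $Q_{\pm1}$ on $\rho$ through the Kato inequality so that the boundary defect is controlled by $\|\rho-\hat\rho\|_{L^1}$ and Gronwall closes.
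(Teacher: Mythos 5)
Your proof is correct and, for the uniqueness part, is essentially the paper's argument: doubling of variables in the four open regions, vanishing of the interface defect at $\{x=\xi(t)\}$ exactly as in Theorem~\ref{thPreuveUnicite}, the bound on the exit defects by $|Q_{\pm1}-\hat Q_{\pm1}|$ via \cite{Andreianov2010-yh}, the Lipschitz estimate $|Q_{\pm1}-\hat Q_{\pm1}|\le L_g\|w_{\pm1}\|_\infty\|\rho-\hat\rho\|_{L^1}$, and Gronwall (the paper keeps the localized trapezoid form \eqref{eqModifiedTrapezoidProp} rather than sending $a,b\to\mp\infty$, but this is immaterial). The one place where you genuinely depart from the paper is the continuity step: you identify compactness ``near the exits'' as the main obstacle and propose to import quantitative bounds from the finite-volume scheme of the Appendix there. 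The paper does not need this, and the reason is the same one already used for the turning curve in Proposition~\ref{thContinuityOfS}: the lines $\{x=\pm1\}$, like $\{x=\xi(t)\}$, are null sets in $(0,T)\times\mathbb{R}$, so covering arbitrary compact subsets of the four \emph{open} regions by trapezoids (on which $\rho_n$ is, for $n$ large, an unconstrained Kruzhkov entropy solution, so the averaging lemma applies) already yields a.e.\ convergence on $(0,T)\times\mathbb{R}$ up to a null set; the uniform bounds $0\le\rho_n\le1$ and the finite speed of propagation then upgrade this to $L^1$ convergence by dominated convergence. So your extra machinery is not wrong, but it addresses a difficulty that is not actually present; the clean statement is that no compactness \emph{on} the exit lines is required, only in their complement.
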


\begin{proof}[Proof of Corollary \ref{thUniquenessConstraint}]
We use of the classical embedding of $W^{1,\infty}( [0,T], (-1,1))$ into $\mathcal{C}^0([0,T], (-1,1))$: there exists $K$ a closed segment of $(-1,1)$ such that $\xi \in \mathcal{C}^0([0,T],K)$.
We consider $(\phi_i)_{i \in \{-1,0,1 \} }$ a partition of the unity of an open set containing $[-1,1]$ such that:
\begin{align*}
  &\textrm{All the supports are segments and } 1 \in \supp(\phi_1), -1 \in \supp(\phi_{-1}) \textrm{ and } K \subset \supp(\phi_0) \subset (-1,1) \\
  &\left[\supp(\phi_{-1}) \cup \supp(\phi_1) \right]\bigcap K = \emptyset
\end{align*}
Let $\rho, \hat{\rho}$ be two solutions in the sense of Definition \ref{defSolconstrained}. We denote $\hat{Q}_{1,-1}$ the constraints associated with $\hat{\rho}$. Let $\Psi \in \mathcal{C}^{\infty}_c((0,T)\times \mathbb{R})$.
We use the classic Kruzkhov doubling of variables (cf. \cite{Evans1998-au}) in the open subdomains of $(0,T)\times\mathbb{R}$ situated between $x=-\infty$ and $x=-1$, $x=-1$ and $x=\xi(t)$, $x=\xi(t)$ and $x=1$, and finally between $x=1$ and $x=+\infty$.
Then by a limiting procedure analogous to the one employed in the proof of Theorem \ref{thPreuveUnicite}, we obtain the Kato inequality carrying singular terms concentrated on the three curves $\{ x = \xi(t) \}$, $\{ x= 1 \}$ and $\{ x = -1 \}$:
\begin{subequations}\label{eqKatoConstrained1}
\begin{align}
&- \iint_{(0,T)\times(-1,1)} |\rho-\hat{\rho}| \phi_t + q(\rho,\hat{\rho}) \phi_x \nonumber \\
& \leq \int_0^T \Psi(t,\xi(t)) \left(\phi_0 + \phi_{-1} + \phi_1 \right)(t,\xi(t))\left[ q^0_R(\gamma_R \rho, \gamma_R \hat{\rho} )
- q^0_L(\gamma_L \rho, \gamma_L \hat{\rho}) \right] \label{seqTermTC}\\
&+ \int_0^T \Psi(t,1) \phi_1(t,1) \left[ q^1(\gamma_R \rho, \gamma_R \hat{\rho} )
- q^1(\gamma_L \rho, \gamma_L \hat{\rho}) \right] \label{seqTermS1}\\
& + \int_0^T \Psi(t,-1) \phi_{-1}(t,-1) \left[ q^{-1}(\gamma_R \rho, \gamma_R \hat{\rho} )
- q^{-1}(\gamma_L \rho, \gamma_L \hat{\rho}) \right] , \label{seqTermSm1}
\end{align}
\end{subequations}
where the left and right traces are taken along their respective curves, and
\begin{align*}
&q^0_{L,R}(\rho, \hat{\rho}) := \sign(\rho - \hat{\rho})\left[
f_{L,R}(\rho) - f_{L,R}(\hat{\rho}) - \dot{\xi}(t) \left( \rho - \hat{\rho} \right)
\right] \\
&q^1(\rho, \hat{\rho}) := \sign(\rho - \hat{\rho})\left[
f_{R}(\rho) - f_{R}(\hat{\rho})
\right] \\
&q^{-1}(\rho, \hat{\rho}) := \sign(\rho - \hat{\rho})\left[
f_{L}(\rho) - f_{L}(\hat{\rho})
\right] .
\end{align*}

Referring to proof of Theorem \ref{thPreuveUnicite}, the integral \eqref{seqTermTC} is zero. Using the same argument as the proof of Proposition 2.10 in \cite{Andreianov2010-yh}, we get:
\begin{align*}
  \textrm{\eqref{seqTermS1}} \leq 2 \int_0^T \Psi(t,1) \left| Q_1(t) - \hat{Q}_1(t) \right| \d t \\
  \textrm{\eqref{seqTermSm1}} \leq 2 \int_0^T \Psi(t,-1) \left| Q_{-1}(t) - \hat{Q}_{-1}(t) \right| \d t
\end{align*}

As in the proof of Theorem \ref{thPreuveUnicite}, we integrate \eqref{eqKatoConstrained1} along a trapezoid $\mathcal{T}^{0,t}_{a,b}$. Then we use the definition of $Q_{\pm 1}$, $\hat Q_{\pm 1}$ with $L_g$ the Lipschitz constant of $g$ to get the following inequality:
\begin{equation*}
  \| \rho(t, \cdot) - \hat{\rho}(t,\cdot) \|_{L^1((a,b))} \leq
  \| \rho_0 - \hat{\rho}_0 \|_{L^1((a -L_f t,b +L_f t))} + 2 \int_0^t \int_{-1}^{1} L_g \left( \mathbb{1}_{(-1,-\sigma)} \omega_{-1} + \mathbb{1}_{(\sigma,1)} \omega_{1} \right) |\rho - \hat{\rho}| \d x \d s.
\end{equation*}
Eventually, using Holder's inequality and Gronwall's Lemma, we get:
\begin{equation}\label{eqModifiedTrapezoidProp}
  \| \rho(t, \cdot) - \hat{\rho}(t,\cdot) \|_{L^1((a,b))} \leq
  \| \rho_0 - \hat{\rho}_0 \|_{L^1((a -L_f t,b +L_f t))}
  e^{ C t },
\end{equation}
where $C := 2L_g\|\mathbb{1}_{(-1,-\sigma)} \omega_{-1} + \mathbb{1}_{(\sigma,1)} \omega_{1}\|_{\infty}$. Consequently, there is at most one solution in the sense of Definition \ref{defSolconstrained} associated to a fixed $\xi$ turning curve and an initial datum $\rho_0$. \\
In order to recover the continuity of the operator $S_g$ we proceed the same way as we proved Proposition \ref{thContinuityOfS}. We first cover any compact set contained in $\{ \xi(t)<x<1 \}$ by trapezoids.
Without loss of generality, we can suppose those trapezoids are at distance at least $\epsilon$ of the both interfaces $\{ x = \xi(t) \}$ and $\{ x =1 \}$.
Consequently, on any trapezoid, for all $n \geq n_0$, $\rho_n$ is a Kruzhkov entropy solution.
We recover compacity thanks to the averaging compactness lemma. This reasoning can be reproduced in the three other parts of the domain: $\{x < -1 \}, \{ -1 < x < \xi(t) \}$ and $\{ x > 1 \}$.
Then, we can pass to the limit via dominated convergence in equation \eqref{eqSolFaibleConst}
and in all the inequalities \eqref{eqSolEntropyConstR}-\eqref{eqSolEntropyConstL}-\eqref{eqFluxLimitp1}.
We conclude the proof with the same classical arguments as the proof of Proposition \ref{thContinuityOfS}.
That ends the proof of Corollary \ref{thUniquenessConstraint}.
\end{proof}
We are ready to state the main result of this section which is an analog of Theorem \ref{thMainCorollary}.
\begin{theorem}\label{thConstrain}
 Let $\rho_0$ verify \eqref{eq:HypRho_0}. Assume that $f$ verifies \eqref{NonDegen}.
  Let $g$ (resp. $\omega_{1,-1}$) satisfy \eqref{eqConditionsG} (resp. \eqref{eqConditionsW}). Let $B$ a convex closed bounded subset of $L^1((0,T) \times \mathbb{R})$
  and $$\mathcal{I}:  (B,\|\cdot\|_{L^1((0,T)\times \mathbb{R})}) \longrightarrow   (\mathcal{C}^0([0,T], \mathbb{R}), \|\cdot\|_{\infty})$$
  be a continuous operator such that $\forall \rho \in B, \, \forall t \in [0,T], \; \mathcal{I}[\rho](t) \in (-1,1)$.
  If there exists $r>0$ such that \eqref{seqLipStab}-\eqref{seqConditionBpossible} hold,
  then there exists $(\rho,\xi)$ a solution to the problem \eqref{eqConstrainedCL}-\eqref{seqTrueCI}-\eqref{seqTrueOp}. Here $\rho$ is a solution in the sense of Definition \ref{defSolconstrained}.
  In particular, existence is verified for $\mathcal{I} = \mathcal{I}_0$ (for affine cost) or with  $\mathcal{I}= \mathcal{I}_{\delta}$ or $ \widetilde{\mathcal{I}}_{\epsilon}$ (for general cost verifying \eqref{eqCostConditions}).
\end{theorem}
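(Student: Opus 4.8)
The plan is to carry over the proof of Theorem~\ref{thMainCorollary} line for line, the only structural change being that the open-end density solver $\mathcal{S}_0$ is replaced by the flux-limited density solver $\mathcal{S}_g$ of Corollary~\ref{thUniquenessConstraint}. A solution in the announced sense is nothing but a fixed point of the composed map $\mathcal{S}_g\circ\mathcal{I}$: if $\rho=\mathcal{S}_g(\mathcal{I}(\rho))$ and $\xi:=\mathcal{I}(\rho)$, then $\rho$ solves \eqref{eqConstrainedCL}-\eqref{seqTrueCI} in the sense of Definition~\ref{defSolconstrained} for this $\xi$, and $\xi=\mathcal{I}(\rho)$ holds by construction, which is exactly the coupled problem \eqref{eqConstrainedCL}-\eqref{seqTrueCI}-\eqref{seqTrueOp}. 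So I would produce such a fixed point with Lemma~\ref{thTopoResult}.

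I would set $X=(L^1((0,T)\times\mathbb{R}),\|\cdot\|_{L^1})$, $Y=(\mathcal{C}^0([0,T]),\|\cdot\|_\infty)$, $\mathcal{D}=\mathcal{S}_g$, and take $K$ to be the image, under the standard embedding $W^{1,\infty}\hookrightarrow\mathcal{C}^0$, of a closed ball $B_{W^{1,\infty}}(0,r)$ intersected with the set of curves valued in $(-1,1)$; the set $B$ is the one in the statement. The three hypotheses of Lemma~\ref{thTopoResult} are then checked just as in Theorem~\ref{thMainCorollary}: the continuity of $\mathcal{D}=\mathcal{S}_g$ from $(K,\|\cdot\|_\infty)$ into $(X,\|\cdot\|_{L^1})$ is Corollary~\ref{thUniquenessConstraint} (which plays here the role Proposition~\ref{thContinuityOfS} plays there, and is where the non-degeneracy \eqref{NonDegen}, the finite-volume existence of Proposition~\ref{thExistRho} and the comparison estimate \eqref{eqModifiedTrapezoidProp} enter); condition \eqref{seqLipStab}, together with the hypothesis $\mathcal{I}[\rho](t)\in(-1,1)$, gives that $\mathcal{I}:B\to K$ is continuous, i.e. \eqref{eqLipStab}; and \eqref{seqConditionBpossible} --- now read with ``admissible solution'' in the sense of Definition~\ref{defSolconstrained} --- is precisely $\mathcal{S}_g\circ\mathcal{I}(B)\subset B$, i.e. \eqref{seqWellDefDoI}. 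Lemma~\ref{thTopoResult} then delivers the fixed point.

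The one step that is genuinely new, and the point I expect to require the most care, is the construction of $K$: the operator $\mathcal{S}_g$ is only defined for turning curves valued in the \emph{open} interval $(-1,1)$, so $K$ must be a compact set of curves staying away from the exits $x=\pm1$, and this forces $\overline{\mathcal{I}(B)}^{\mathcal{C}^0}$ to lie strictly inside $(-1,1)$. The pointwise requirement $\mathcal{I}[\rho](t)\in(-1,1)$ is not by itself uniform over the (non-compact) set $B$, so what one really needs, and should keep in mind when applying the theorem, is the uniform bound $\sup_{\rho\in B}\|\mathcal{I}[\rho]\|_\infty<1$. In every example of interest this is automatic: for the equilibrium operators $\mathcal{I}_0$ and $\mathcal{I}_\delta$ the relation $\int_{-1}^{\xi}c=\int_{\xi}^{1}c$ evaluated at a density --- or subjective density --- with values in $[0,1]$ gives, from $1\le c\le\|c\|_\infty$, the explicit bound $|\xi(t)|\le\frac{\|c\|_\infty-1}{\|c\|_\infty+1}<1$, and for the relaxed operator $\widetilde{\mathcal{I}_\epsilon}$ one checks directly on the Cauchy problem \eqref{eqInertiaEiko} (whose initial datum \eqref{seqCIInertia} satisfies the same bound) that $\xi$ stays in a fixed compact subinterval of $(-1,1)$ on $[0,T]$.

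With the abstract statement in hand, the concluding ``in particular'' is obtained by rerunning Section~\ref{sec:applications} with $\mathcal{S}_g$ instead of $\mathcal{S}_0$. The global mass bound \eqref{seqBoundedRho} still holds, since the weak formulation \eqref{eqSolFaibleConst} is conservative on the whole line, and Lemma~\ref{thControlRhoT} holds verbatim because its proof uses only \eqref{eqSolFaibleConst}; hence the sets $B_1$ of \eqref{eqDefBSet} and $B_2$ of \eqref{eqBsetInertia} still satisfy \eqref{seqConditionBpossible}, Proposition~\ref{thCoutLineaireLS} (for $\mathcal{I}_0$, affine cost) and its counterpart together with Propositions~\ref{thWellPosedInertia}-\ref{thContinuityInertia} (for $\mathcal{I}_\delta$ and $\widetilde{\mathcal{I}_\epsilon}$, general cost) still give \eqref{seqLipStab}, and the bounds of the previous paragraph supply the pointwise constraint $\mathcal{I}[\rho](t)\in(-1,1)$. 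Theorem~\ref{thConstrain} then applies in each case.
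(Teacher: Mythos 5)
Your proof is correct and follows exactly the route the paper intends: the paper leaves the proof of Theorem~\ref{thConstrain} implicit, presenting it as the analog of Theorem~\ref{thMainCorollary} with $\mathcal{S}_0$ replaced by the constrained solver $\mathcal{S}_g$ furnished by Proposition~\ref{thExistRho} and Corollary~\ref{thUniquenessConstraint}, which is precisely your argument. Your additional observation that the pointwise condition $\mathcal{I}[\rho](t)\in(-1,1)$ must in practice be upgraded to the uniform bound $\sup_{\rho\in B}\|\mathcal{I}[\rho]\|_{\infty}<1$ in order to obtain a compact $K$ on which $\mathcal{S}_g$ is everywhere defined is a legitimate sharpening of the stated hypothesis, and your explicit bounds for $\mathcal{I}_0$, $\mathcal{I}_\delta$ and $\widetilde{\mathcal{I}_\epsilon}$ confirm that it holds in all the applications claimed in the theorem.
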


\appendix
\section{Convergence of the finite volume scheme in the constrained case}

In order to prove existence of a solution to \eqref{eqConstrainedCL} in the sense of Definition \ref{defSolconstrained}, we construct a converging finite volume scheme adapted around the fixed turning curve $\xi$. At the exits we use an operator splitting method with a scheme for the constraints $Q_1$ and $Q_{-1}$ as in \cite{Donadello2015}.\\
We now present the scheme used in this setting. Let $T, J \in \mathbb{N}$ such that:
\begin{equation}\label{numCFL}
2 \bigl( \|f'\|_{\infty} + \|\dot{\xi}\|_{\infty} \bigr) \frac{J}{T} \leq 1. \tag{CFL}
\end{equation}
We construct the following scheme:
\begin{subequations}
  \begin{align}
    &\Delta t = \frac{1}{T},
    &t^n := n \Delta t, \\
    &\Delta x = \frac{1}{J},
    &x_j = j \Delta x,\\
    &s^n := \frac{1}{\Delta t} \int_{t^n}^{t^{n+1}} \dot{\xi}(s) \d s ,
    &s_{\Delta}(t) := \sum_{1}^{N} \mathbb{1}_{[t^n, t^{n+1})}(t) s^n,\label{numSlope} \\
    &\xi_{\Delta} (t) := \xi(0) + \int_0^{t} s_{\Delta} (s) \d s ,
    &\xi^n = \xi_{\Delta}(t^n). \label{numXi}
    \end{align}
The discretization \eqref{numSlope}-\eqref{numXi} of the $\xi$ interface is detailled in \cite{Abraham2022} Section 3.1 where it is required to construct the adapted mesh. For any $n$, we denote $j_n$ the unique element of $\llbracket -J, J \rrbracket$ such that $\xi^n \in [x_{j_n}, x_{j_n + 1})$. We construct the following mesh:
  \begin{align}
    &\chi^n_j := \left\lbrace \begin{matrix}
    x_j \textrm{ if } j \leq j_n - 1 \\
    y^n \textrm{ if } j = j_n \\
    x_j \textrm{ if } j \geq j_n + 1 \\
  \end{matrix} \right.
  &\mathcal{P}_{j+1/2}^n := \left\lbrace \begin{matrix}
  (\chi_{j}^n ,\chi_{j+1}^n)\times (t^n, t^{n+1}) &\textrm{ if } j \leq j_n - 2 \\
  \\
  \textrm{the trapezoid } \chi^n_{j_n -1} \, \chi_{j_n -1}^{n+1} \, \chi^{n+1}_{j_{n+1}}\, \chi^n_{j_n} &\textrm{ if } j = j_n -1 \\
  \\
  \textrm{the trapezoid } \chi^n_{j_n} \, \chi_{j_{n +1}}^{n+1} \, \chi^{n+1}_{j_n +2} \, \chi^n_{j_n+2} &\textrm{ if } j = j_n  \\
  \\
  (\chi_{j+1}^n ,\chi_{j+2}^n)\times (t^n, t^{n+1}) &\textrm{ if } j \geq j_n + 1 \\
\end{matrix} \right.
  \end{align}
Notice that, thanks to the \eqref{numCFL} condition, $x_{j_n -1} < \xi^{n+1} < x_{j_n+2}$ so the trapezoids defined above
are never reduced to a triangle.
We denote $\underline{\mathcal{P}_{j+1/2}^n}$ (resp. $\overline{\mathcal{P}_{j+1/2}^n}$) the bottom (resp. top) segment of the tapezoid $\mathcal{P}_{j+1/2}^n$.
However, now that the mesh is modified we have two different partitions for the line $t=t^{n+1}$: $(\underline{\mathcal{P}^{n+1}_{j+1/2}})_{j \in \mathbb{Z}}$ and $(\overline{\mathcal{P}^{n}_{j+1/2}})_{j \in \mathbb{Z}}$.
We define $(\bar{\rho}^{n+1}_{i+1/2})_{i \in \mathbb{Z}}$ corresponding to the values of $\rho^{n+1}$ on $(\overline{\mathcal{P}^{n}_{i+1/2}})_{i \in \mathbb{Z}}$
and $({\rho}^n_{j+1/2})_{j \in \mathbb{Z}}$ the projection of this values on $(\underline{\mathcal{P}^n_{j+1/2}})_{j \in \mathbb{Z}}$.
  \begin{align}
    \bar{\rho}^{n+1}_{j + 1/2} = \frac{ \rho^n_{j+1/2} \biggl|\underline{\mathcal{P}_{j+1/2}^{n}}\biggr| - \Delta t (f^n_{j+1}- f^n_j) }{\biggl|\overline{\mathcal{P}_{j+1/2}^{n}}\biggr|}\\
    \rho^{n+1}_{j+1/2} := \frac{1}{\biggl|\underline{\mathcal{P}^{n+1}_{j+1/2}}\biggr|}\sum_{i \in \mathbb{Z}} \biggl| \underline{\mathcal{P}^{n+1}_{j+1/2}} \bigcap \overline{\mathcal{P}^{n}_{i+1/2}} \biggr| \, \bar{\rho}^{n+1}_{i+1/2} \\
    \rho_{\Delta}(t,x) := \sum_{n = 0}^N
    \sum_{\begin{matrix}
    j \in \mathbb{Z} \\
    j \neq j_n \pm 1
  \end{matrix}}
  \rho_{j+1/2}^n \, \mathbb{1}_{\mathcal{P}_{j+1/2}^n}(t,x)
  \end{align}
We now want to define the numerical fluxes $(f^n_j)_{j \in \mathbb{Z}}$ corresponding to the left and right edges of the trapezoids. It is worth noticing that we skipped $f^n_{j_n + 1}$ when we constructed the mesh.
We first define the non-local constraint approximation.
\begin{align}
  &\rho^n_{\Delta x}(\cdot) = \sum_{j \in \mathbb{Z}} \rho^n_{j+1/2} \mathbb{1}_{[\chi^n_j, \chi^n_{j+1})}(\cdot)\\
  &q_1^n := g_1\left(\int_{\sigma}^1 \rho_{\Delta x}^{n}(x) \omega_1(x)\d x \right) \\
  &q_{-1}^n := g_{-1}\left(\int_{-1}^{-\sigma} \rho_{\Delta x}^{n}(x) \omega_{-1}(x)\d x \right)\\
&F(\rho^n_{j-1/2},\rho^n_{j + 1/2}) = \left\{ \begin{matrix}
\min \left\{ \God_{f}(\rho^n_{j-1/2},\rho^n_{j + 1/2}) \, , \, q^n_1 \right\} && \textrm{ if } j - 1 = J \\
\max \left\{ \God_{-f}(\rho^n_{j-1/2},\rho^n_{j + 1/2}) \, , \, -q^n_{-1} \right\} && \textrm{ if } j = -J \\
\bold{F}_{int}^n(\rho^n_{j - 1/2},\rho^n_{j + 1/2}) && \textrm{ if } j=j_n \\
\God_{f}(\rho^n_{j-1/2},\rho^n_{j + 1/2}) && \textrm{ if } j > j_n \textrm{ and } j - 1 \neq J \\
\God_{-f}(\rho^n_{j-1/2},\rho^n_{j + 1/2}) && \textrm{ if } j < j_n \textrm{ and } j \neq -J.  \\
\end{matrix}\right.
\end{align}
Eventually, we define $\bold{F}_{int}^n$ as in \cite{AbrahamPreprint} (see details in Subsections 2.5, 3.3 and 5.1):
\begin{align}
  &f^n_{L,R}(\rho) := \pm f(\rho) - s^n \rho \notag \\
  &\forall (\rho_L,\rho_R) \in [0,1]^2, \, \exists k \in [0,1] \textrm{ s.t. }
  \bold{God}_{f^n_L}(\rho_L,k) = \bold{God}_{f^n_R}(k, \rho_R) \notag \\
  &\bold{F}_{int}^n(\rho^n_{j - 1/2},\rho^n_{j + 1/2}) := \bold{God}_{f^n_L}(\rho^n_{j - 1/2},k) = \bold{God}_{f^n_R}(k, \rho^n_{j + 1/2})
\end{align}
\end{subequations}
Numerical simulations with for this scheme can be found in \cite[Sect. 5.1]{AbrahamPreprint} for the case of open-end condition at exits.

We are now in a position to start the proof of convergence, which merely assembles with the help of the partition-of-unity technique of \cite{Abraham2022,AbrahamPreprint} the arguments from \cite{AbrahamPreprint} (for the inner interface situated at $x=\xi(t)$ and \cite{Donadello2015} (for the constraints set at $x=\pm 1$).
\begin{proof}[Proof of Proposition \ref{thExistRho}]
 The proof follows the general idea of \cite[Sect. 4]{Abraham2022}, see also \cite{AbrahamPreprint}.
Since the interfaces $\{x=-1 \}$, $\{ x = \xi(t)\}$ and $\{ x = 1 \}$ are non-intersecting, we isolate them in the supports of a partition of unity $\phi_{-1}$, $\phi_0$ and $\phi_1$. We fix a test function $\phi$.
Taking (the discretization of) the test function $\phi_0\phi$ we can use the specific result for the Hughes' model treated in \cite[Sect. 5.1]{AbrahamPreprint} to recover the approximate entropy inequalities satisfied by the discrete solution, with the test function $\phi_0\phi$.
For test functions $\phi_{-1}\phi$ and $\phi_1\phi$, we use in the same way the result of \cite[Prop.\,3.1]{Donadello2015}. Summing up the contributions of the three parts of the partition of unity, we obtain approximate entropy inequality for the discrete solution, with arbitrary test function $\phi$. In addition, the integral weak formulation for the approximate solution follows from the scheme's conservativity. We use the same compactness argument as in \cite[Sect.\,3.4]{Abraham2022}.
 We can pass to the limit in the approximate weak formulation and in the approximate entropy inequalities, for the chosen converging subsequence and arbitrary test function. This allows us to characterize the limit as an entropy solution in the sense of Definition \ref{defSolconstrained} of the problem at hand. Finally, thanks to the uniqueness proven in Theorem \ref{thUniquenessConstraint}, the whole sequence of discrete solutions converges to the unique solution in the sense of Definition \ref{defSolconstrained}.
\end{proof}

\section*{Acknowledgments}
This paper has been supported by the RUDN University Strategic Academic Leadership Program.

\bibliographystyle{siamplain}
\bibliography{citation}
\end{document}